\newtheorem{theorem}{Theorem}
\newtheorem{lemma}[theorem]{Lemma}
\newtheorem{koro}[theorem]{Corollary}
\newtheorem{thm}{Theorem}[section]
\newtheorem{lem}[thm]{Lemma}
\newtheorem{prop}[thm]{Proposition}
\newcommand{\A}[1]{\vspace{6mm}}
\newcommand{\bd}{{\mathrm{bd}}\,}
\newcommand{\relbd}{{\mathrm{relbd}}\,}
\newcommand{\lin}{{\mathrm{lin}}\,}
\newcommand{\ee}{\varepsilon}
\newcommand{\R}{{\mathbb R}}
\newcommand{\N}{{\mathbb N}}
\newcommand{\cH}{{\mathcal  H}}
\newcommand{\cF}{{\mathcal  F}}
\newcommand{\cK}{{\mathcal  K}}
\newcommand{\cP}{{\mathcal  P}}
\newcommand{\bK}{{\mathbb K}}
\newcommand{\Ha}{{\mathcal H}}
\newcommand{\nor}{\mathrm{nor}\,}
\renewcommand{\j}{j}
\def\section{%
\setcounter{equation}{0} \setcounter{theorem}{0} \@startsection
{section}{1}{\z@}{-4.0ex plus -1ex minus
    -.2ex}{2.3ex plus .2ex}{\bf}}
\theoremstyle{definition}
\begin{document}

\title{A flag representation of projection functions}

\author{Paul Goodey, Wolfram Hinderer, Daniel Hug,\\ Jan Rataj and 
Wolfgang Weil}
\date{\today}

\maketitle
\begin{abstract}
\noindent 
The $k$th projection function $v_k(K,\cdot)$ of a convex body $K\subset \R^d, d\ge 3,$ is a function on the Grassmannian $G(d,k)$ which measures the $k$-dimensional volume of the projection of $K$ onto members of $G(d,k)$. For $k=1$ and $k=d-1$, simple formulas for the projection functions exist. In particular, $v_{d-1}(K,\cdot)$ can be written as a spherical integral with respect to the surface area measure of $K$. Here, we generalize this result and prove two integral representations for $v_k(K,\cdot), k=1,\dots,d-1$, over flag manifolds. Whereas the first representation generalizes a result of Ambartzumian (1987), but uses a flag measure which is not continuous in $K$, the second representation is related to a recent flag formula for mixed volumes by Hug, Rataj and Weil (2013) and depends continuously on $K$. 

\medskip\noindent
{\it Key words:} Projection functions, intrinsic volumes, flag measures, convex bodies, Grassmannian, generalized curvatures

\medskip\noindent
{\it 2010 Mathematics Subject Classification:} 52A20, 52A22, 52A39, 53C65
\end{abstract}

\section{Introduction}

Let $\mathcal K$ be the space of  convex bodies (non-empty compact convex sets) in $\R^d, d\ge 3,$ supplied with the Hausdorff metric. For $K\in {\mathcal K}$ and $k\in\{0,....,d\}$, the $k$th projection function $v_k(K,\cdot)$ of $K$ is a continuous function on $G(d,k)$, the Grassmannian of $k$-dimensional subspaces in $\R^d$, and is defined by
$$
v_k(K,E) = V_k(K |E),\quad E\in G(d,k).
$$
Here $K|E$ is the orthogonal projection of $K$ onto $E$ and $V_k$ denotes the $k$th intrinsic volume which, for a body in the $k$-dimensional space $E$, equals the ($k$-dimensional) volume in $E$. The projection functions $v_0(K,\cdot)=1$ and $v_d(K,\cdot)=V_d(K)$ are trivial. For $k=1$, $v_1(K|E)$ is the width of $K$ in direction of the line $E$, hence
$$
v_1(K,E) = h(K,u)+h(K,-u) ,
$$
where $h(K,\cdot)$ denotes the support function of $K$ and $u$ is in the direction of the line $E$. 

For $k=d-1$, we have a simple and well-known integral representation, namely 
\begin{equation}\label{projfunc}
v_{d-1}(K,x^\bot) = \frac{1}{2} \int_{S^{d-1}} |\langle x,u\rangle |\, S_{d-1}(K, du) , \quad x\in S^{d-1} .
\end{equation}
Here, $S^{d-1}$ is the unit sphere, $\langle\cdot, \cdot\rangle$ denotes the standard scalar product in $\R^d$ and $S_{d-1}(K,\cdot )$ is the $(d-1)$st surface area measure of $K$ (see \cite{S}, for the standard notions from convex geometry which we use). For bodies $K\in{\mathcal K}$ which are centrally symmetric and smooth enough, a corresponding integral representation for arbitrary projection functions is known. This involves the projection generating measure $\rho_k(K,\cdot )$ of $K$, a (signed) measure on $G(d,k)$, and reads
\begin{equation}\label{projfunc2}
v_{k}(K,E) = \frac{2^k}{k!} \int_{G(d,k)} |\langle E,F\rangle | \,\rho_{k}(K, dF) , \quad E\in G(d,k) ,
\end{equation}
where $|\langle E,F\rangle |$ is the absolute determinant of the projection of $E$ onto $F$. \eqref{projfunc2} holds more generally for generalized zonoids $K$, but it is also known that a corresponding formula for all centrally symmetric bodies $K$ cannot hold, for $k\in\{ 1,...,d-2\}$, at least not with a (signed) measure $\rho_k(K,\cdot)$ on $G(d,k)$, (see \cite{GW} and the remarks on p. 635 of \cite{HRW}).

Let $\cal P\subset \cal K$ be the dense subset of convex polytopes. For $k=1$, $d=3$ and $P\in\cal P$, Ambartzumian \cite{Amb1,Amb2} introduced a new concept by showing that the width function $v_1(P,\cdot)$ of $P$ has an integral representation with a certain measure on a flag manifold.  It is a first goal of this paper to generalize this result to arbitrary dimensions $d$ and $k$.
To be more precise, we introduce, for $1\le q\le d$, the {\it flag manifold}
$$
F(d,q) := \{ (u,U) : U\in G(d,q), u\in U\cap S^{d-1}\} .
$$
With the usual topology, $F(d,q)$ is a compact space. For a polytope $P$ and $k\in\{1,...,d-1\}$, we define a Borel measure $\tau_k(P,\cdot )$ on 
$F(d,d-k)$ by 
\begin{equation}\label{firstflagmeasure}
\tau_k(P,\cdot ) := \sum_{F\in{\mathcal F}_k(P)} V_k(F)\, \int_{n(P,F)} {\bf 1}\{ (u,F^\bot)\in \cdot\}\ {\mathcal H}^{d-1-k}(du) .
\end{equation}
Here, ${\mathcal F}_k(P)$ is the collection of $k$-faces of $P$, $n(P,F)$ is the intersection of the normal cone $N(P,F)$ of $P$ at $F$ with the unit sphere $S^{d-1}$ (that is, a $(d-1-k)$-dimensional spherical polytope), $F^\bot\in G(d,d-k)$ is the linear subspace orthogonal to $F$ and ${\mathcal H}^{d-1-k}$ is the Hausdorff measure of dimension $d-1-k$.
For $d=3$, the measure $\tau_1(P,\cdot )$ coincides, up to the factor $2\pi$, with the one introduced by Ambartzumian \cite{Amb1,Amb2}.
 
\begin{theorem}\label{TH1} 
For $1\le k\le d-2$ and  $P\in {\mathcal P}$, we have 
\begin{equation}\label{th1}
v_k(P,E) =  \omega_{d-k}^{-1}\int_{F(d,d-k)} \frac{\langle u^\bot\cap U, u^\bot\cap E^\bot\rangle^2}{\| u | E^\bot \|^{d-k-2}}\ \tau_k(P, d(u,U)),\quad E\in G(d,k).
\end{equation}
\end{theorem}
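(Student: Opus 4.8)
The idea is to compute $v_k(P,E)=\mathcal H^k(P|E)$ by fibering $\R^d$ over $E$ and reading off the $k$-volume from the slices $S_y:=P\cap(y+E^\bot)$, $y\in E$. We may assume that $E$ is in general position with respect to $P$ (the exceptional subspaces form a set of measure zero in $G(d,k)$, and both sides of \eqref{th1} are continuous in $E$), so that for $\mathcal H^k$-a.e.\ $y\in\relint(P|E)$ the slice $S_y$ is a $(d-k)$-dimensional polytope whose vertices are precisely the points $F\cap(y+E^\bot)$ with $F$ a $k$-face of $P$ meeting $y+E^\bot$ in its relative interior. Fix in addition a unit vector $w\in S^{d-1}\cap E^\bot$ lying outside a suitable $\mathcal H^{d-k-1}$-null set. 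For $\mathcal H^k$-a.e.\ $y\in\relint(P|E)$ the functional $\langle\cdot,w\rangle$ attains its maximum over $S_y$ at a single vertex, which lies in the relative interior of a unique $k$-face $F(y)$; a short argument with supporting hyperplanes shows that a $k$-face $F$ equals $F(y)$ exactly when $y\in\relint(\pi_E F)$ and $N(P,F)$ contains a vector of the form $w+e$ with $e\in E$ --- a condition on $F$ that does not involve $y$. Summing the identity $\mathbf 1_{P|E}(y)=\sum_{F\in\mathcal F_k(P)}\mathbf 1\{F(y)=F\}$ over $E$ and using $\mathcal H^k(\pi_E F)=\langle L_F,E\rangle\,V_k(F)$, where $L_F\in G(d,k)$ is the direction space of $F$ and $\langle L_F,E\rangle$ is the absolute Jacobian of $\pi_E|_{\aff F}$, we obtain, for every admissible $w$,
\[
v_k(P,E)=\sum_{F\in\mathcal F_k(P)}\langle L_F,E\rangle\,V_k(F)\,\mathbf 1\{(w+E)\cap N(P,F)\neq\emptyset\}.
\]

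Next we integrate this identity over $w\in S^{d-1}\cap E^\bot$, a sphere of $\mathcal H^{d-k-1}$-measure $\omega_{d-k}$; the indicator turns into the solid angle $\mathcal H^{d-k-1}\{w\in S^{d-1}\cap E^\bot:(w+E)\cap N(P,F)\neq\emptyset\}$. For a $k$-face $F$ with normal space $F^\bot=L_F^\bot\in G(d,d-k)$, the map sending $w$ to the unique point of $(w+E)\cap F^\bot$ is the inverse of the orthogonal projection $A:=\pi_{E^\bot}|_{F^\bot}\colon F^\bot\to E^\bot$ (an isomorphism, by general position), so that $\{w\in E^\bot:(w+E)\cap N(P,F)\neq\emptyset\}=A(N(P,F))$. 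The change-of-variables formula for the radial map $u\mapsto Au/\|Au\|$ on the sphere, whose Jacobian is $|\det A|\,\|Au\|^{-(d-k)}$, together with $|\det A|=\langle F^\bot,E^\bot\rangle=\langle L_F,E\rangle$ and $\|Au\|=\|u|E^\bot\|$, gives
\[
\mathcal H^{d-k-1}\{w\in S^{d-1}\cap E^\bot:(w+E)\cap N(P,F)\neq\emptyset\}=\langle L_F,E\rangle\int_{n(P,F)}\frac{\mathcal H^{d-1-k}(du)}{\|u|E^\bot\|^{\,d-k}} .
\]
Substituting into the previous display and recalling the definition \eqref{firstflagmeasure} of $\tau_k(P,\cdot)$ yields \eqref{th1} with $\langle L_F,E\rangle^2\,\|u|E^\bot\|^{-(d-k)}$ in place of the stated integrand.

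It remains to verify the pointwise linear-algebra identity
\[
\langle L_F,E\rangle^2=\langle F^\bot,E^\bot\rangle^2=\big\langle u^\bot\cap F^\bot,\ u^\bot\cap E^\bot\big\rangle^2\,\|u|E^\bot\|^2,\qquad u\in F^\bot\cap S^{d-1},
\]
which rewrites $\langle L_F,E\rangle^2\,\|u|E^\bot\|^{-(d-k)}$ as the kernel appearing in \eqref{th1}. Writing $F^\bot=\R u\oplus(u^\bot\cap F^\bot)$ orthogonally, $\langle F^\bot,E^\bot\rangle=|\det A|$ equals $\|Au\|$ times the $(d-k-1)$-volume of the image of an orthonormal basis of $u^\bot\cap F^\bot$ under $A$ followed by the orthogonal projection, within $E^\bot$, onto $(Au)^\bot\cap E^\bot$; since $\langle v,Au\rangle=\langle v,u\rangle$ for all $v\in E^\bot$, one has $(Au)^\bot\cap E^\bot=u^\bot\cap E^\bot$, and that volume is exactly $\langle u^\bot\cap F^\bot,u^\bot\cap E^\bot\rangle$. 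This finishes the argument.

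The main obstacle is the combinatorial heart of the first step: proving that, for fixed generic $w$ and $\mathcal H^k$-a.e.\ $y$, the maximizing $k$-face $F(y)$ is well defined and is singled out by the $y$-independent normal-cone condition, together with the careful bookkeeping of the several null exceptional sets (in $E$, in $w$, and in $y$) and the elementary check that $w+e\in N(P,F)$ with $e\in E$ detects a maximum, not a minimum, of $\langle\cdot,w\rangle$ on $S_y$. Everything after the first display is routine integral geometry and linear algebra.
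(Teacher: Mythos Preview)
Your proof is correct and follows essentially the same route as the paper: both arguments start from the face-sum identity $v_k(P,E)=\sum_{F}V_k(F|E)\,\mathbf 1\{N(P,F)\cap(w+E)\neq\emptyset\}$ for a.e.\ $w\in S^{d-1}\cap E^\bot$, integrate over $w$, convert the resulting solid angle into $\int_{n(P,F)}\|u|E^\bot\|^{k-d}\,d\mathcal H^{d-1-k}$ via the radial change of variables under $\pi_{E^\bot}|_{F^\bot}$, and finish with the same pointwise identity $\langle E,F\rangle^2=\|u|E^\bot\|^2\,\langle u^\bot\cap F^\bot,u^\bot\cap E^\bot\rangle^2$. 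The only differences are cosmetic: the paper cites Schneider for the face-sum formula whereas you supply a self-contained maximizing-vertex argument, and the paper proves the identity for every $E$ directly while you restrict to generic $E$ and pass to the limit---a step that is fine but deserves one line of justification, since the integrand is unbounded (the bound $|\langle E,F\rangle|\int_{n(P,F)}\|u|E^\bot\|^{k-d}\le\omega_{d-k}$, which drops out of your own solid-angle computation, suffices).
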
 
Here, $\omega_n$ denotes the surface area of the $n$-dimensional unit ball and the integrand is defined as $0$ for $u\in E$. This result remains true for $k=d-1$ and reproduces in the special case of polytopes equation \eqref{projfunc}, if the integrand is properly interpreted as $\|u| E^\bot \|=|\langle u,v\rangle|$ for a unit vector $v\in E^\bot$.

For $d=3$ and $k=1$, Ambartzumian extended this integral formula for projection functions to arbitrary convex bodies $K\subset \R^3$ by using polytopal approximation. 
Now, let  $d\ge 3$ and $1\le k\le d-2$. With a given convex body $K\in \cal K$, we can associate  an approximating sequence of polytopes $P_i\to K$, $P_i\in{\cal P}$. Since  $\tau_k(P_i,F(d,d-k))= \omega_{d-k}V_k(P_i)$ and $V_k(P_i)\to V_k(K)$, we can choose a weakly convergent subsequence of the measures $\tau_k(P_i,\cdot)$, $i\in\N$, to obtain a limit measure $\tau_k(K,\cdot )$. However, it is important to notice that even for $d=3$ and $k=1$, the integrand in \eqref{th1} is not a continuous function. This indicates that the extension 
of Theorem \ref{TH1} to arbitrary convex bodies $K$ by a limit procedure requires further arguments (see Proposition \ref{PRP2}). Moreover, as was shown by an example in \cite{HHW}, the limit measure $\tau_k(K,\cdot )$ depends on the approximating sequence $(P_i)_{i\in\N}$ and is, thus, not continuous in $K$.  

The second and major goal of this work is therefore concerned with an integral representation of $v_k(K,\cdot )$, in the spirit of \eqref{th1}, but with a flag measure $\psi_k(K,\cdot)$ on $F(d,d-k)$, which depends continuously on $K\in{\mathcal K}$. To be more precise, let $\psi_k(P,\cdot)$, for $P\in{\mathcal P}$, be the measure on $F(d,d-k)$ given by
\begin{align}\label{secondflagmeasure}
\psi_k(P,\cdot )&:=  \sum_{F\in{\mathcal F}_k(P)} V_k(F)\int_{n(P,F)} \int_{G(\langle u\rangle, d-k)} {\bf 1}\{ (u,U)\in \cdot\} \cr
&\qquad\qquad\qquad\qquad \times 
\langle U,F^\bot\rangle^2 \, \nu_{d-k}^{\langle u\rangle} (dU) \,\cH^{d-1-k}(du) .
\end{align}
Here, $G(\langle u\rangle, d-k)$ denotes the Grassmannian of $(d-k)$-flats containing the line $\langle u\rangle$ generated by $u$ and $\nu_{d-k}^{\langle u\rangle} $ is the corresponding invariant probability measure. 
As follows from results in \cite{Hind, HHW}, $\psi_k(P,\cdot )$ satisfies a local Steiner formula for $P$ and  therefore has a continuous extension to all $K\in{\mathcal K}$. 
Our main result is the following.
 
\begin{theorem}\label{TH2} 
For $1\le k\le d-1$, there is a function $g$ on $G(d,k)\times F(d,d-k)$ such that 
\begin{equation}\label{th2}
v_k(K,E) =  \int_{F(d,d-k)} g(E,u,U)\ \psi_k(K, d(u,U))
\end{equation}
holds for all  $K\in {\mathcal K}$, and almost all $E\in G(d,k)$. 
\end{theorem}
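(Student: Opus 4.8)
The plan is to deduce Theorem~\ref{TH2} from Theorem~\ref{TH1} by a ``transfer'' identity on the Grassmannian, proving it first for polytopes and then extending to all convex bodies by an integral-geometric argument. Fix $k\in\{1,\dots,d-2\}$ (the case $k=d-1$ being the classical formula \eqref{projfunc} rewritten through \eqref{secondflagmeasure}) and abbreviate
\[
f(E,u,U):=\omega_{d-k}^{-1}\,\frac{\langle u^\bot\cap U,\,u^\bot\cap E^\bot\rangle^2}{\|u|E^\bot\|^{\,d-k-2}},
\]
so that $v_k(P,E)=\int_{F(d,d-k)}f(E,u,U)\,\tau_k(P,d(u,U))$ for $P\in{\mathcal P}$ by Theorem~\ref{TH1}. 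A comparison of \eqref{firstflagmeasure} and \eqref{secondflagmeasure} shows that $\psi_k(P,\cdot)$ arises from $\tau_k(P,\cdot)$ by replacing, for each $k$-face $F$ and each $u\in n(P,F)$, the unit mass at $(u,F^\bot)$ by the measure $\langle U,F^\bot\rangle^2\,\nu_{d-k}^{\langle u\rangle}(dU)$ on $\{u\}\times G(\langle u\rangle,d-k)$. Hence, if $g$ is a Borel function on $G(d,k)\times F(d,d-k)$ satisfying the transfer identity
\begin{equation}\label{transfer}
\int_{G(\langle u\rangle,d-k)}g(E,u,U)\,\langle U,W\rangle^2\,\nu_{d-k}^{\langle u\rangle}(dU)=f(E,u,W)
\end{equation}
for all $W\in G(d,d-k)$ and all unit vectors $u\in W$ with $u\notin E$, then inserting \eqref{transfer} termwise into \eqref{secondflagmeasure} and invoking Theorem~\ref{TH1} yields $\int g(E,u,U)\,\psi_k(P,d(u,U))=\int f(E,u,U)\,\tau_k(P,d(u,U))=v_k(P,E)$ for every $P\in{\mathcal P}$ and every $E$ for which $n(P,F)\cap E$ is $\mathcal H^{d-1-k}$-null for all $k$-faces $F$, i.e.\ for almost every $E$. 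Everything thus reduces to solving \eqref{transfer}.

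To solve \eqref{transfer}, fix a unit vector $u$ and identify $G(\langle u\rangle,d-k)$ with $G(u^\bot,d-k-1)$ via $U\mapsto u^\bot\cap U$; this is an $O(u^\bot)$-equivariant homeomorphism that carries $\nu_{d-k}^{\langle u\rangle}$ to the invariant probability measure, satisfies $\langle U,W\rangle=\langle u^\bot\cap U,u^\bot\cap W\rangle$ for $u\in U\cap W$, and turns the right-hand side of \eqref{transfer} into the constant $\omega_{d-k}^{-1}\|u|E^\bot\|^{-(d-k-2)}$ times the squared-cosine kernel on $G(u^\bot,d-k-1)$ with one entry fixed at $u^\bot\cap E^\bot$. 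The averaging operator $T_uh:=\int h(U)\langle U,\cdot\rangle^2\,\nu_{d-k}^{\langle u\rangle}(dU)$ commutes with $O(u^\bot)$, hence acts as a scalar on each isotypic component of $L^2(G(u^\bot,d-k-1))$; since $(U,W)\mapsto\langle U,W\rangle^2$ is a polynomial of bounded degree in the orthogonal projections, only finitely many components occur in its spherical expansion, and exactly these carry the right-hand side of \eqref{transfer} as well. Consequently $f(E,u,\cdot)$ lies in the finite-dimensional range of $T_u$, and we may set $g(E,u,\cdot):=T_u^{-1}\bigl(f(E,u,\cdot)\bigr)$. The resulting $g$ is a polynomial of bounded degree in $U\mapsto P_{u^\bot\cap U}$ whose coefficients are explicit functions of $\|u|E^\bot\|$ and of $u^\bot\cap E^\bot$ — written out by a moment computation on $G(u^\bot,d-k-1)$ generalizing the classical case $d-k-1=1$, or obtained abstractly from the Gelfand-pair structure of $\bigl(O(d-1),O(d-k-1)\times O(k)\bigr)$. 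In particular $g$ is Borel on $G(d,k)\times F(d,d-k)$, jointly continuous off the set $\{u\in E\}$, there has a singularity only of order $d-k-2$, and is bounded and continuous when $k\in\{d-2,d-1\}$; this proves Theorem~\ref{TH2} for all $P\in{\mathcal P}$.

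For a general $K\in{\mathcal K}$ choose polytopes $P_i\to K$; by the local Steiner formula for $\psi_k$ recalled before Theorem~\ref{TH2}, $\psi_k(P_i,\cdot)\to\psi_k(K,\cdot)$ weakly. Since $g(E,\cdot)$ is in general unbounded near $\{u\in E\}$, it cannot be integrated directly against this sequence; instead, fix a continuous test function $\phi$ on $G(d,k)$ and put $\tilde g_\phi(u,U):=\int_{G(d,k)}g(E,u,U)\,\phi(E)\,dE$. Because $\{E\in G(d,k):u\in E\}$ has codimension $d-k$ in $G(d,k)$ whereas $g$ has there a singularity only of order $d-k-2<d-k$, this integral converges absolutely and uniformly in $(u,U)$ and defines a bounded continuous function on $F(d,d-k)$; hence $K\mapsto\int_{F(d,d-k)}\tilde g_\phi(u,U)\,\psi_k(K,d(u,U))$ is continuous on ${\mathcal K}$. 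By Fubini it equals $K\mapsto\int_{G(d,k)}\bigl(\int_{F(d,d-k)}g(E,u,U)\,\psi_k(K,d(u,U))\bigr)\phi(E)\,dE$, which on ${\mathcal P}$ agrees, by the polytope case, with $\int_{G(d,k)}v_k(P,E)\,\phi(E)\,dE$; since $K\mapsto\int_{G(d,k)}v_k(K,E)\,\phi(E)\,dE$ is continuous as well, the two coincide for every $K\in{\mathcal K}$. As $\phi$ was arbitrary, $v_k(K,E)=\int_{F(d,d-k)}g(E,u,U)\,\psi_k(K,d(u,U))$ for almost every $E$, which is \eqref{th2}. (An alternative to the first two steps is to note $v_k(K,E)=c_{d,k}\,V(K[k],B_{E^\bot}^{d-k}[d-k])$ and to feed the flat ball $B_{E^\bot}^{d-k}$ into the flag formula for mixed volumes of \cite{HRW}, which produces a representation with the same measure $\psi_k$; but this runs into the same analytic issues.)

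The principal obstacle is the harmonic-analytic core of the second step: verifying that the squared-cosine kernel on $G(u^\bot,d-k-1)$ has finite spectral support, that the right-hand side of \eqref{transfer} lies in the range of $T_u$, and — crucially for the Fubini and continuity arguments of the last step — that the solution $g$ can be chosen Borel in $(E,u,U)$ with blow-up in $E$ of exactly order $d-k-2$ along $\{u\in E\}$ and bounded elsewhere. A secondary, purely technical nuisance is the careful treatment of the degenerate flags with $u\in E$, which is precisely what forces the restriction to almost every $E$ when $k\le d-3$, whereas for $k\in\{d-2,d-1\}$ the kernel $g$ is continuous and the identity holds for every $E$.
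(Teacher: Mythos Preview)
Your proposal is essentially correct, and the key integral equation you call the ``transfer identity'' \eqref{transfer} is precisely the content of the paper's Lemma~\ref{L_Grassm}; the paper solves it not by abstract harmonic analysis but by the explicit linear algebra of \cite{HRW} (the matrix $D=(d^{d,k}_{p,q})$ is shown there to be regular), and the resulting function $g$ is exactly the one in \eqref{gdef}. So on the algebraic core the two proofs coincide.

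Where you genuinely diverge is in the passage from the polytope case to general $K$. The paper does \emph{not} approximate. Instead it uses a curvature-measure representation of $V_k(K|E)$ valid for every convex body (Lemma~\ref{L_Daniel}, taken from \cite{Hug}) together with the measure-geometric description of $\psi_k(K,\cdot)$ in Proposition~\ref{lemmarelation}; the transfer identity is then applied pointwise on $\nor K$, and the ``almost all $E$'' restriction comes from an $O(d)$-averaging argument showing $\int_{O(d)}\int\|u|E^\perp\|^{k-d+2}\,S_k(\rho K,du)\,\nu(d\rho)<\infty$ (Theorem~\ref{TH2detailed}(a),(b)). Your route---prove \eqref{th2} for polytopes via Theorem~\ref{TH1}, then smear over $E$ against a test function $\phi$ and pass to the weak limit $\psi_k(P_i,\cdot)\to\psi_k(K,\cdot)$---is a legitimate alternative and is in fact the approach the paper sketches in its final discussion (Section~6) but deliberately does not pursue. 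Your averaging is over $E$ rather than over rotations of $K$; the codimension count ($d-k$ versus singularity order $d-k-2$) that makes $\tilde g_\phi$ integrable is exactly the analogue of the paper's spherical integral $\int_0^{\pi/2}(\sin\theta)^{k-d+2}(\cos\theta)^{k-1}(\sin\theta)^{d-1-k}\,d\theta<\infty$.

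What each approach buys: the paper's route needs the normal-bundle machinery (principal curvatures, the formula from \cite{R99}) but then yields more, namely the pointwise statements (c) and (d) of Theorem~\ref{TH2detailed} for bodies with bounded curvature density and for polytopes in general position. Your route is conceptually lighter---it uses only Theorem~\ref{TH1} and weak continuity of $\psi_k$---but gives only the almost-everywhere statement. The two points you flag as obstacles are real but tractable: the invertibility of $T_u$ on the relevant finite-dimensional subspace is exactly \cite[Proposition~1]{HRW}, and the continuity of $\tilde g_\phi$ follows cleanly from the $O(d)$-covariance $g(\rho E,\rho u,\rho U)=g(E,u,U)$, which lets you move the limit $(u_n,U_n)\to(u,U)$ into the test function via a local section $\rho_n\to\mathrm{id}$ and then apply dominated convergence with the fixed majorant $\|\phi\|_\infty|g(\cdot,u,U)|$. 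One small inaccuracy: for a fixed polytope $P$ the condition you state (``$n(P,F)\cap E$ is $\mathcal H^{d-1-k}$-null'') is not quite enough to guarantee $\int|g|\,d\psi_k(P)<\infty$ when $k\le d-3$; you actually need $n(P,F)\cap E=\emptyset$ for all $F$ (compare Theorem~\ref{TH2detailed}(d)), which still holds for $\nu_k^d$-almost every $E$.
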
 

Here `almost all' refers to the invariant probability measure $\nu_k^d$ on $G(d,k)$ and the `exceptional set' may depend on $K$.

The setup of the paper is as follows. In the next section, we introduce the necessary notation and we collect some background information. In Section 3, we give the proof of Theorem \ref{TH1}. The subsequent two sections are concerned with the proof of Theorem \ref{TH2}. The final section discusses some functional analytic aspects concerning an integral equation which connects the two representations \eqref{th1} and \eqref{th2}. In the Appendix we prove an auxiliary lemma and a combinatorial identity, both of which are used in the solution of this integral equation. 

\section{Preliminaries}

In the following, we work in Euclidean space $\R^d, d\ge 3$, with scalar product $\langle \cdot,\cdot\rangle$ and norm $\|\cdot\|$. We denote the unit ball and the unit sphere by $B^d$ and $S^{d-1}$, respectively, and let ${\mathcal H}^j$ be the $j$-dimensional Hausdorff measure. We use the abbreviations $\kappa_d :=\cH^d(B^d)$ and $\omega_d :=d\kappa_d$. For a set $A\subset \R^d$, $\bd A$ is the topological boundary and $\lin A$ is the linear hull. For real numbers $a,b$, we denote the minimum by $a\wedge b$. We also use the symbol $\wedge$ for the exterior product of vectors; the usage will always be clear from the context.
We have already introduced the classes $\cK$ and $\cP$ of convex bodies and convex polytopes. For $P\in {\mathcal P}$, ${\mathcal F}_k(P)$ is the set of $k$-dimensional faces of $P$, $k\in\{ 0,\dots ,d-1\}$. For $P\in \cP$ and a face $F\in\bigcup_{k=0}^{d-1} \cF_k(P)$ of $P$, $N(P,F)$ is the normal cone of $P$ at $F$ and $n(P,F) :=N(P,F)\cap S^{d-1}$. 

The Grassmann manifold $G(d,k)$ is the set of $k$-dimensional linear subspaces of $\R^d$, supplied with its usual topology and with the invariant (probability) measure $\nu_k^d$.  
For subspaces $A,B\in G(d,k)$, $|\langle A,B\rangle |$ is the (absolute value of the) determinant of the orthogonal projection of $A$ onto $B$ (or vice versa) and $\langle A,B\rangle^2$ is the corresponding square value. We shall repeatedly use that $|\langle A,B\rangle |=|\langle A^\bot,B^\bot\rangle |$ for $A,B\in G(d,k)$.  More generally, we need the $i$th product $\langle A,B\rangle_i$, $i=0,\dots ,k\wedge (d-k)$, for which we refer the reader to \cite{HRW}, for a detailed description. Roughly speaking, $\langle A,B\rangle_i$ is the length of the orthogonal projection of a simple unit $k$-vector representing $B$ onto the space of exterior products of unit simple $(k-i)$-vectors in $A$ with corresponding $i$-vectors in the orthogonal complement $A^\bot$. We mention that $\langle A,B\rangle_i=\langle B,A\rangle_i$ and emphasize the special case $\langle A,B\rangle_0=|\langle A,B\rangle|$. 
For a subspace $L\in G(d,k)$ and $j\in\{0,\dots ,d-1\}$, we write $G(L,j)$ for the manifold of subspaces $M\in G(d,j)$ which contain $L$, if $j\ge k$, or are contained in $L$, if $j< k$. We also denote the corresponding invariant probability measure on $G(L,j)$ by $\nu^L_j$.

Measures on flag manifolds have been investigated in recent years for convex bodies and more general sets (for example, sets of positive reach). Since we are interested in projection properties, the concentration on convex sets seems natural. For convex bodies $K$, flag measures can be introduced in various ways, as projection means of support measures, by direct representations on the normal bundle, or by a local Steiner formula, generalizing the classical two local descriptors of convex sets, the area and curvature measures. A survey on flag measures with further historical remarks and references is given in \cite{HTW}. Here, we only need area-type flag measures, as they were studied and used in \cite{HRW} and \cite{HHW}. For $1\le q\le d$, we consider the flag manifold 
$F(d,q)$, defined in the introduction and consisting of pairs $(u,U)$ of a subspace $U\in G(d,q)$ and a unit vector $u$ in $U$. For $p\in \{0,\ldots,d-1\}$, we shall also use the manifold $F^\bot (d, p):=\{(u,V)\in S^{d-1}\times G(d,p):u\in V^\bot\}$, which arises from $F(d,q)$ by the orthogonality map $(u,U)\mapsto (u,U^\bot)$ (thus $p=d-q$). Another simple transformation replaces $F^\perp(d,p)$ by $F^\perp(d,d-1-p)$ via the map $(u,U)\mapsto (u,U^\bot \cap u^\bot)$. 

The two (series of) flag measures which we introduced in the previous section are defined on $F(d,d-k), k\in\{ 1,\dots, d-1\}$. The first, $\tau_k(K,\cdot)$, is given, for polytopes $K$, by \eqref{firstflagmeasure} and leads to the projection formula in Theorem \ref{TH1}. Although this flag measure and the projection formula can be extended to arbitrary bodies $K\in\cK$ by a compactness argument, the extended measure is not unique and thus is not continuous in $K$. Details will be given in the next section. It therefore seems that this flag measure is less interesting for applications in convex geometry. The second measure, $\psi_k(K,\cdot)$, was also defined for polytopes in a direct way, by \eqref{secondflagmeasure}, but it has a (weakly) continuous extension to all convex bodies $K\in\cK$, as follows from the subsequent local Steiner formula (see \cite{HHW, HTW}). For $K\in\cK, j\in\{0,\dots ,d-1\}, \rho > 0,$ and a Borel set $\eta\subset F^\perp(d,j)$, we consider the local parallel set 
$$M^{(j)}_\rho(K,\eta) :=\{E\in A(d,j) : 0<d(K,E)\le\rho, (u(K,E), L(E))\in\eta\} \, .$$
Here, $A(d,j)$ is the affine Grassmannian, $d(K,E)$ the (Euclidean) distance between $K$ and $E$, $u(K,E)$ the direction from $K$ to $E$ in which the distance is attained, and $L(E)\in G(d,j)$ the subspace parallel to $E\in A(d,j)$. Notice that the direction $u(K,E)$ is unique, although it may be realized in more than one pair $(x,y) \in K\times E$. Then, $M^{(j)}_\rho(K,\eta)$ is a Borel set in $A(d,j)$ and its Haar measure $\mu_j^d(M^{(j)}_\rho(K,\eta))$ has a polynomial expansion in $\rho$,
$$
\mu_j^d(M^{(j)}_\rho(K,\eta)) = \frac{1}{d-j}\sum_{m=0}^{d-j-1}\rho^{d-j-m}{{d-j}\choose{m}}S_m^{(j)}(K,\eta) ,
$$
with coefficients $S_m^{(j)}(K,\cdot)$ which are finite (nonnegative) measures on $F^\perp(d,j)$. There are many different ways to formulate this local flag-type Steiner formula and to normalize the measures. Here, we based the result on the normalization of the Haar measure $\mu_j^d$ used in \cite{SW}. The flag measure $S_m^{(j)}(K,\cdot)$  has the property that its image under the mapping $(u,V)\mapsto u$ is a multiple of the $m$th area measure $S_m(K,\cdot)$ of $K$, and the total measure is proportional to the $m$th intrinsic volume of $K$. More precisely,
$$S_m^{(j)}(K,F^\perp(d,j)) =\frac{\omega_{d-j}}{\omega_d}S_m(K,S^{d-1})=\frac{\omega_{d-j}\omega_{d-m}}{ \binom{d-1}{m}\omega_d}V_m(K).$$
For a survey on the various flag measures and their interrelations, see \cite{HTW}.

The measure $\psi_k(K,\cdot)$, defined for polytopes by \eqref{secondflagmeasure}, is closely related to the measure $S_{k}^{(d-1-k)}(K,\cdot)$. In fact, in \cite[p. 21]{HHW} it is shown that
\begin{align}\label{conn}
\int_{F^\perp(d,d-1-k)} &f(u,V+\langle u\rangle)\, S^{(d-1-k)}_k(P,d(u,V)) \\ \nonumber
&= \frac{\omega_{k+1}}{\omega_d}\int_{F(d,d-k)} f(u,U)\, \psi_k(P,d(u,U)) ,
\end{align} 
for all measurable functions $f : F(d,d-k)\to [0,\infty)$. Since $S_{k}^{(d-1-k)}(K,\cdot)$ depends continuously on $K\in{\mathcal K}$, \eqref{conn} shows that $\psi_k(K,\cdot)$ has a continuous extension to all convex bodies $K$ and that
$$
\psi_k(K,F(d,d-k))= \frac{\omega_{d-k}}{\binom{d-1}{k}} V_k(K) .
$$
Moreover, \eqref{conn} then holds for general bodies $K\in\mathcal K$.

For our proof of Theorem \ref{TH2} we use  measure geometric representations of the intrinsic volumes and the flag measures, described in more detail in \cite{HRW}. Namely, for a convex body $K\in\mathcal K$, let
$$\nor K :=\{(x,u)\in\bd K\times S^{d-1}:\, \langle u, y-x\rangle \leq 0\text{ for all }y\in K\}$$
be its unit normal bundle. It is known that at $\cH^{d-1}$-almost all $(x,u)\in\nor K$, there exist principal directions $a_i(K;x,u)\in S^{d-1}$ and associated principal curvatures 
$k_i(K;x,u)\in[0,\infty]$, $i=1,\ldots,d-1$, and we put
$$\bK_I(K;x,u):=\frac{\prod_{i\in I}k_i(K;x,u)}{\prod_{i=1}^{d-1}\sqrt{1+k_i(K;x,u)^2}}$$
and
$$A_I(K;x,u):=\lin\{ a_i(K;x,u):\, i\in I\},$$
whenever $I\subset\{1,\ldots,d-1\}$ (with the appropriate interpretation, if $k_i(K;x,u) =\infty$ for some $i\in\{1,\dots, d-1\}$, or if $I=\emptyset$, cf. \cite{HRW}). Then, the $k$th intrinsic volume of $K$ can be written as the integral
\begin{equation}\label{Vj}
V_k(K)=\frac 1{\omega_{d-k}}\int_{\nor K} \sum_{|I|=d-1-k}\bK_I(K;x,u)\,\cH^{d-1}(d(x,u)),
\end{equation}
where the sum is taken over all subsets $I$ of $\{1,\ldots,d-1\}$ of given cardinality,
cf.\ \cite[p.~637]{HRW}. Moreover, in \cite{HRW}, (generalized) flag measures  $\Gamma_k(K;\cdot)$ on $\R^d\times F^\perp(d,d-1-k)$ and their image measures $\Omega_k(K;\cdot)=
\Gamma_k(K;\R^d\times \cdot )$ on $F^\perp(d,d-1-k)$ were introduced and it was proved, for measurable functions $g:
\R^d\times F^\perp(d,d-1-k)\to [0,\infty)$, that
\begin{align}\label{geommes}
\int &g(x,u,V)\,\Gamma_k(K;d(x,u,V)) =\frac{\binom{d-1}k}{\omega_{d-k}}\int_{\nor K}\sum_{|I|=d-1-k}\bK_{I}(K;x,u)\\
& \times\int_{G(u^\perp,d-1-k)}g(x,u,V)
\langle V,A_I(K;x,u)\rangle^2\,\nu^{u^\perp}_{d-1-k}(dV)\,\cH^{d-1}(d(x,u)).\nonumber
\end{align}
It follows from \cite[(12) and (25)]{HTW} and \cite[p. 639]{HRW} that the two measures $S^{(d-1-k)}_k(K,\cdot)$ and $\Omega_k(K;\cdot)$ are proportional (they both are averages of the same area measure of projections).
Since 
$$\Gamma_k(K;\R^d\times F^\perp(d,d-1-k) ) = \Omega_k(K;F^\perp(d,d-1-k) ) = V_k(K)$$
(this follows from \cite[p. 641]{HRW}) and 
$$\psi_k(K,F(d,d-k))= \frac{\omega_d}{\omega_{k+1}}S^{(d-1-k)}_k(K,F^\perp (d,d-1-k)) = \frac{\omega_{d-k}}{\binom{d-1}k}V_k(K),$$
we therefore get
\begin{equation}\label{geommes2}
\Omega_k(K;\cdot) = \frac{\binom{d-1}{k}\omega_d}{\omega_{d-k}\omega_{k+1}} S^{(d-1-k)}_k(K,\cdot) ,
\end{equation}
which yields a corresponding measure geometric formula for $\psi_k(K,\cdot)$.

\begin{prop}\label{lemmarelation} We have
\begin{align}\label{psij}
\int g(u,U)\,&\psi_k(K,d(u,U))=\int_{\nor K}\sum_{|I|=k}\bK_{I^c}(K;x,u)\cr 
&\times\int_{G(u^\perp,k)}g(u,V^\perp)\langle V,A_I(K;x,u)\rangle^2\,\nu^{u^\perp}_k(dV)\,\cH^{d-1}(d(x,u)),\quad\quad
\end{align}
for all continuous functions $g$ on $F(d,d-k)$, where $I^c=\{1,\ldots,d-1\}\setminus I$. 
\end{prop}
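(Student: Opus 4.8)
The plan is to obtain \eqref{psij} by composing the relations already collected in Section~2: the transfer formula \eqref{conn} between $\psi_k(K,\cdot)$ and $S_k^{(d-1-k)}(K,\cdot)$, the proportionality \eqref{geommes2} between $S_k^{(d-1-k)}(K,\cdot)$ and $\Omega_k(K;\cdot)$, the identity $\Omega_k(K;\cdot)=\Gamma_k(K;\R^d\times\cdot)$, and the normal-bundle representation \eqref{geommes} of $\Gamma_k(K;\cdot)$. A single change of variables inside the hyperplane $u^\perp$ then turns the result into the stated form. Since $F(d,d-k)$ is compact and all measures in play are finite, it suffices to prove \eqref{psij} for nonnegative continuous $g$ and to extend afterwards by linearity.

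So fix a nonnegative continuous $g$ on $F(d,d-k)$ and consider the continuous, $x$-free function $(x,u,V)\mapsto g(u,V+\langle u\rangle)$ on $\R^d\times F^\perp(d,d-1-k)$; here $(u,V)\mapsto V+\langle u\rangle$ maps $F^\perp(d,d-1-k)$ into $G(d,d-k)$ with $u\in V+\langle u\rangle$ (this is exactly the composition of the elementary transformations recorded in Section~2). Applying \eqref{conn} (valid for $K\in\cK$) with $f=g$, then \eqref{geommes2}, then $\Omega_k(K;\cdot)=\Gamma_k(K;\R^d\times\cdot)$, and finally \eqref{geommes} with the above integrand (where, to avoid a clash of notation, I write the index set in \eqref{geommes} as $J$ with $|J|=d-1-k$), I get
\begin{align*}
\int g(u,U)\,\psi_k(K,d(u,U))
&=\frac{\omega_d}{\omega_{k+1}}\cdot\frac{\omega_{d-k}\omega_{k+1}}{\binom{d-1}{k}\omega_d}\cdot\frac{\binom{d-1}{k}}{\omega_{d-k}}\int_{\nor K}\sum_{|J|=d-1-k}\bK_J(K;x,u)\\
&\quad\times\int_{G(u^\perp,d-1-k)}g(u,V+\langle u\rangle)\,\langle V,A_J(K;x,u)\rangle^2\,\nu^{u^\perp}_{d-1-k}(dV)\,\cH^{d-1}(d(x,u)),
\end{align*}
and the three constants multiply to $1$. (Sanity check: for $g\equiv1$, using $\int_{G(u^\perp,d-1-k)}\langle V,A_J\rangle^2\,\nu^{u^\perp}_{d-1-k}(dV)=\binom{d-1}{k}^{-1}$ together with \eqref{Vj} reproduces $\psi_k(K,F(d,d-k))=\omega_{d-k}\binom{d-1}{k}^{-1}V_k(K)$.)

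It remains to rewrite the inner integral. For $\cH^{d-1}$-almost every $(x,u)\in\nor K$ the principal directions $a_1(K;x,u),\dots,a_{d-1}(K;x,u)$ may be (and, following \cite{HRW}, are) chosen to form an orthonormal basis of $u^\perp$; hence for $|J|=d-1-k$ the subspaces $A_J(K;x,u)$ and $A_{J^c}(K;x,u)$ are orthogonal complements within $u^\perp$. Now apply, inside the $(d-1)$-dimensional space $u^\perp$, the change of variables $V\mapsto W:=u^\perp\cap V^\perp$, which is an $O(u^\perp)$-equivariant homeomorphism of $G(u^\perp,d-1-k)$ onto $G(u^\perp,k)$ and therefore carries $\nu^{u^\perp}_{d-1-k}$ to $\nu^{u^\perp}_k$. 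One has $W^\perp=V+\langle u\rangle$, so $g(u,V+\langle u\rangle)=g(u,W^\perp)$, and, writing $A'$ for the orthogonal complement of a subspace $A\subset u^\perp$ taken in $u^\perp$, the identity $|\langle A,B\rangle|=|\langle A',B'\rangle|$ (for equidimensional $A,B\subset u^\perp$) applied to $A=V$, $B=A_J(K;x,u)$ gives $\langle V,A_J(K;x,u)\rangle^2=\langle W,A_{J^c}(K;x,u)\rangle^2$, since $V'=W$ and $A_J'=A_{J^c}$. Substituting and reindexing the sum by $I:=J^c$ (so $|I|=k$, $\bK_J=\bK_{I^c}$, $A_J=A_{I^c}$), then renaming $W$ to $V$, yields precisely \eqref{psij}.

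The only step that is not pure bookkeeping is this last change of variables: one has to verify the two small geometric identities $W^\perp=V+\langle u\rangle$ and $\langle V,A_J\rangle^2=\langle W,A_{J^c}\rangle^2$, the second of which rests on the $\cH^{d-1}$-a.e.\ orthonormality of the principal directions (which is exactly what makes $A_J$ and $A_{J^c}$ complementary in $u^\perp$). Everything else is an application of the identities already recorded, together with tracking of the normalizing constants.
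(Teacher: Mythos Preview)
Your proof is correct and follows essentially the same route as the paper's: apply \eqref{conn}, \eqref{geommes2} and \eqref{geommes} to reach the normal-bundle integral over $G(u^\perp,d-1-k)$, then pass to $G(u^\perp,k)$ via the orthogonal-complement-in-$u^\perp$ map (using $W^\perp=V+\langle u\rangle$ and $\langle V,A_J\rangle^2=\langle W,A_{J^c}\rangle^2$) and reindex $J\mapsto I=J^c$. The only differences are cosmetic: your explicit tracking of the three constants and the $g\equiv1$ sanity check, which the paper omits.
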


\begin{proof} From \eqref{conn}, \eqref{geommes} and \eqref{geommes2}, we obtain
\begin{align*}
\int &g(u,U)\,\psi_k(K,d(u,U))\\
&=\frac{\omega_{d-k}}{\binom{d-1}{k}}\int g(u,W+\langle u\rangle)\, \Omega_k(K;d(u,W))\\
&=\int_{\nor K}\sum_{|I|=d-1-k}\bK_{I}(K;x,u)
\int_{G(u^\perp,d-1-k)}g(u,W+\langle u\rangle)\\
&\qquad \times \langle W,A_{I}(K;x,u)\rangle^2\,\nu^{u^\perp}_{d-1-k}(dW)\,\cH^{d-1}(d(x,u))\\
&=\int_{\nor K}\sum_{|I|=d-1-k}\bK_{I}(K;x,u)
\int_{G(u^\perp,d-1-k)}g(u,(W^\perp\cap u^\perp)^\perp)\\
&\qquad \times \langle W^\perp\cap u^\perp,A_{I^c}(K;x,u)\rangle^2\,\nu^{u^\perp}_{d-1-k}(dW)\,\cH^{d-1}(d(x,u))\\
&=\int_{\nor K}\sum_{|I|=d-1-k}\bK_{I}(K;x,u)
\int_{G(u^\perp,k)}g(u,V^\perp)\\
&\qquad \times \langle V,A_{I^c}(K;x,u)\rangle^2\,\nu^{u^\perp}_{k}(dV)\,\cH^{d-1}(d(x,u))\\
&=\int_{\nor K}\sum_{|I|=k}\bK_{I^c}(K;x,u)
\int_{G(u^\perp,k)}g(u,V^\perp)\\
&\qquad \times \langle V,A_I(K;x,u)\rangle^2\,\nu^{u^\perp}_{k}(dV)\,\cH^{d-1}(d(x,u)) ,
\end{align*}
which proves the assertion.
\end{proof}

\section{Proof of Theorem \ref{TH1}}

We start with a slightly different representation of projection functions of polytopes.

\begin{prop}\label{PRP1}
For $1\le k\le d-1$, $P\in {\mathcal P}$, and $E\in G(d,k)$, we have 
\begin{equation}\label{prp1}
v_k(P,E) = \omega_{d-k}^{-1} \sum_{{F\in {\mathcal F}_k(P)}\atop{|\langle E,F\rangle|\not= 0}}\langle E,F\rangle^2V_k(F)\int_{n(P,F)} \|u|E^\perp\|^{k-d}\ {\mathcal H}^{d-1-k}(du)\, .
\end{equation}
\end{prop}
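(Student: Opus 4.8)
My plan is to compute $v_k(P,E)=V_k(P|E)$ by collapsing $E^\perp$ onto a single line and reducing to the familiar one‑codimensional situation. Fix a unit vector $w\in E^\perp$ and put $H_w:=E+\lin\{w\}\in G(d,k+1)$; since $E\subset H_w$, orthogonal projection gives $(P|H_w)|E=P|E$, so $v_k(P,E)=V_k(Q_w|E)$ where $Q_w:=P|H_w$ is a $(k+1)$-dimensional polytope (we may assume $\dim P\ge k+1$, the cases $\dim P\le k$ being immediate). Projecting $Q_w$ within $H_w$ onto $E=H_w\cap w^\perp$ along the direction $w$, I use the classical fact that the \emph{$w$-upper facets} of $Q_w$, i.e.\ the $k$-faces $G$ of $Q_w$ whose outer unit normal $\nu_G\in H_w$ satisfies $\langle\nu_G,w\rangle>0$, cover $Q_w|E$ in an $\mathcal H^{k}$-almost bijective way (a generic line in $H_w$ parallel to $w$ meets $Q_w$ in a segment whose $w$-maximal endpoint lies in the relative interior of a unique such $G$). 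Hence $v_k(P,E)=\sum_G V_k(G|E)$, the sum running over the $w$-upper facets of $Q_w$.

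The next, and I expect most delicate, step is to rewrite this as a sum over faces of $P$. For $\mathcal H^{d-1-k}$-almost all $w\in S^{d-1}\cap E^\perp$ the flat $H_w$ is in general position with respect to the finitely many cones of the normal fan of $P$ and to the direction spaces $L_F:=\lin(F-F)$ of the faces $F$; for such $w$ a dimension count shows that the facets of $Q_w$ are exactly the sets $F|H_w$ with $F\in\cF_k(P)$ and $N(P,F)\cap H_w$ a ray (faces of $P$ of dimension $\ne k$ yielding faces of $Q_w$ of dimension $\ne k$), each $F|H_w$ is $k$-dimensional, and $N(Q_w,F|H_w)=N(P,F)\cap H_w$. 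Writing $u$ for a generator of this ray and decomposing $u=e+tw$ with $e\in E$ gives $t=\langle u,w\rangle$ and $u|E^\perp=tw$, so $F|H_w$ is a $w$-upper facet precisely when $w$ lies in the radial image $\overline n(P,F)$ of $n(P,F)$ under $u\mapsto (u|E^\perp)/\|u|E^\perp\|$, a map that sends $S^{d-1}\cap L_F^\perp$ into $S^{d-1}\cap E^\perp$ and is injective on $n(P,F)$ as soon as $|\langle E,F\rangle|\ne0$. Using $(F|H_w)|E=F|E$ this yields, for a.e.\ $w\in S^{d-1}\cap E^\perp$,
\[
v_k(P,E)=\sum_{F\in\cF_k(P),\ w\in\overline n(P,F)}V_k(F|E).
\]
The bad $w$ form a finite union of lower-dimensional subsets of $S^{d-1}\cap E^\perp$, hence an $\mathcal H^{d-1-k}$-null set; carrying out this verification together with the dimension count above is the main technical point.

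Finally I integrate this identity over $w\in S^{d-1}\cap E^\perp$ and divide by $\omega_{d-k}=\mathcal H^{d-1-k}(S^{d-1}\cap E^\perp)$, obtaining
\[
v_k(P,E)=\frac1{\omega_{d-k}}\sum_{F\in\cF_k(P)}V_k(F|E)\,\mathcal H^{d-1-k}\bigl(\overline n(P,F)\bigr).
\]
Terms with $|\langle E,F\rangle|=0$ vanish, since then $V_k(F|E)=0$. For the remaining $F$ the radial projection above is a diffeomorphism of $n(P,F)\subset S^{d-1}\cap L_F^\perp$ onto $\overline n(P,F)$ with Jacobian $|\langle E,F\rangle|\,\|u|E^\perp\|^{-(d-k)}$ (the standard Jacobian of a linear isomorphism followed by radial normalization, here applied to $\pi_{E^\perp}\colon L_F^\perp\to E^\perp$), so that $\mathcal H^{d-1-k}(\overline n(P,F))=|\langle E,F\rangle|\int_{n(P,F)}\|u|E^\perp\|^{k-d}\,\mathcal H^{d-1-k}(du)$. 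Combining this with $V_k(F|E)=|\langle E,F\rangle|\,V_k(F)$ turns the displayed identity into exactly \eqref{prp1}.
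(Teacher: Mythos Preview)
Your proof is correct and follows essentially the same route as the paper's. Both establish, for almost every $w\in S^{d-1}\cap E^\perp$, the pointwise identity $v_k(P,E)=\sum_F V_k(F|E)$ over those $F\in\cF_k(P)$ with $w\in\overline n(P,F)$ (equivalently $N(P,F)\cap(E+w)\neq\emptyset$) --- the paper by citing formula~(39) in \cite{Schn93}, you by passing through $Q_w=P|H_w$ and its upper facets --- and then integrate over $w$, evaluating $\mathcal H^{d-1-k}(\overline n(P,F))$ via the Jacobian $|\langle E,F\rangle|\,\|u|E^\perp\|^{k-d}$ of the radial projection; the paper obtains the same factor by lifting to the cone $N(P,F)\cap B^d$ and combining linear coarea with polar coordinates.
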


Proposition \ref{PRP1} is a special case of Proposition 4.1 in \cite{Hug}. For completeness, we include a proof.

\begin{proof}
For $v\in E^\perp\setminus \{0\}$, put $E_v^+ := E+v$. Then, for ${\mathcal H}^{d-1-k}$-almost all $v\in E^\perp\cap S^{d-1}$, we have
$$
v_k(P,E) =  \sum_{{F\in {\mathcal F}_k(P)}\atop{N(P,F)\cap E_v^+ \not=\emptyset}} V_k(F|E)\, ,
$$
see formula (39) in \cite[p. 292]{Schn93}. Integration with respect to $v\in S^{d-1-k}(E^\perp)$ yields
\begin{align}\label{3.1}
&\omega_{d-k} v_k(P,E)\cr
&\ = \sum_{F\in {\mathcal F}_k(P)} \int_{S^{d-1-k}(E^\perp)} {\bf 1}\{N(P,F)\cap E_v^+ \not=\emptyset\} |\langle E,F\rangle | V_k(F) \ {\mathcal H}^{d-1-k}(dv)\, . \quad\quad
\end{align}
Now assume $|\langle E,F\rangle |\not= 0$. For fixed $F$ and $E$, let $\pi : \lin N(P,F)\to E^\perp$ be the orthogonal projection onto $E^\perp$ with Jacobian $J\pi(x) = |\langle F^\perp, E^\perp\rangle |=|\langle E,F\rangle |$. Then
\begin{align*}
|\langle E,&F\rangle |\int_{n(P,F)} \|u|E^\perp\|^{k-d}\ {\mathcal H}^{d-1-k}(du)\cr
&= (d-k) \int_{N(P,F)\cap B^d} J\pi(x) \|x/\|x\|\, |\, E^\perp\|^{k-d}\, {\mathcal H}^{d-k}(dx)\cr
&= (d-k) \int_{E^\perp} {\bf 1}\{N(P,F)\cap B^d\cap E_v^+ \not=\emptyset\} \|v^*/\|v^*\|\, | \, E^\perp\|^{k-d}\, {\mathcal H}^{d-k}(dv)\, ,
\end{align*}
where $v^*$ is defined by $\{v^*\} = E_v^+\cap N(P,F)$. Introducing polar coordinates, we obtain
\begin{align}
|\langle E,&F\rangle |\int_{n(P,F)} \|u|E^\perp\|^{k-d}\ {\mathcal H}^{d-1-k}(du)\cr
&= (d-k) \int_{S^{d-1-k}(E^\perp)} \int_0^\infty {\bf 1}\{N(P,F)\cap B^d\cap E_{rv}^+ \not=\emptyset\} \|v^*/\|v^*\|\, | \, E^\perp\|^{k-d}\cr
&\quad \times r^{d-1-k}\, dr\,  {\mathcal H}^{d-1-k}(dv)\cr
&= \int_{S^{d-1-k}(E^\perp)} {\bf 1}\{N(P,F)\cap E_{v}^+ \not=\emptyset\} \,  {\mathcal H}^{d-1-k}(dv)\, .\label{3.2}
\end{align}
Equation \eqref{prp1} now follows from \eqref{3.1} and \eqref{3.2}. \end{proof}

In order to transform \eqref{prp1} into equation \eqref{th1}, we choose $u\in n(P,F)$ and extend it to an orthonormal basis  $u_1=u,u_2,\dots ,u_{d-k}$ of $F^\perp$. If $u\notin E$, then
\begin{align*}
|\langle E,F\rangle| &= |\langle F^\perp,E^\perp\rangle |\cr
&= {\mathcal H}^{d-k} ([0,u_1|E^\perp]+([0,u_2]+\cdots +[0,u_{d-k}])|E^\perp)\cr
&= \| u|E^\perp \| {\mathcal H}^{d-1-k} (([0,u_2]+\cdots +[0,u_{d-k}])|(E^\perp\cap u^\perp))\cr
&= \| u|E^\perp \|\cdot |\langle u^\perp\cap F^\perp, u^\perp\cap E^\perp\rangle | ,
\end{align*}
yielding
\begin{equation}\label{exchange}
\frac{\langle E,F\rangle ^2}{\| u|E^\perp\|^{d-k}} = \frac{\langle u^\perp\cap F^\perp, u^\perp\cap E^\perp\rangle ^2}{\| u|E^\perp\|^{d-k-2}},
\end{equation}
since $\| u|E^\perp\|\neq 0$ by assumption. 

Taking into account the definition \eqref{firstflagmeasure}, Proposition \ref{PRP1} thus yields Theorem \ref{TH1}.

As we mentioned in the introduction, Theorem \ref{TH1} can be extended to arbitrary convex bodies $K$ using approximation by polytopes and a weak compactness argument to define the flag measure $\tau_k(K,\cdot)$ as a weak limit. In view of \eqref{exchange}, the same is true (with the same measure) for the representation in Proposition \ref{PRP1}. More precisely, we get the following result. Here, and subsequently, we use the convention $0\cdot\infty:=0$ for the integrand.

\begin{prop}\label{PRP2}
For $1\le k\le d-1$ and $K\in {\mathcal K}$, there is a Borel measure $\tau_k(K,\cdot)$ on $F(d,d-k)$ such that, for all $E\in G(d,k)$, we have 
\begin{equation}\label{prp2}
v_k(K,E) =\omega_{d-k}^{-1} \int_{F(d,d-k)} \frac{\langle E,U^\perp\rangle ^2}{\|u|E^\perp\|^{d-k}}\ \tau_k(K,d(u,U))\, .
\end{equation}
\end{prop}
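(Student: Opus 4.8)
The plan is to derive \eqref{prp2} as the limit, along a weakly convergent subsequence of approximating polytopes, of the polytopal representation in Proposition \ref{PRP1}, rewritten via the identity \eqref{exchange}. First I would fix $K\in\mathcal K$ and choose polytopes $P_i\to K$, $P_i\in\mathcal P$. Combining Proposition \ref{PRP1} with \eqref{exchange} and the definition \eqref{firstflagmeasure} of $\tau_k(P_i,\cdot)$, we have for every $E\in G(d,k)$
\begin{equation*}
v_k(P_i,E)=\omega_{d-k}^{-1}\int_{F(d,d-k)}\frac{\langle E,U^\perp\rangle^2}{\|u|E^\perp\|^{d-k}}\,\tau_k(P_i,d(u,U)),
\end{equation*}
with the convention $0\cdot\infty:=0$ for the integrand (this covers the faces $F$ with $\langle E,F\rangle=0$, equivalently $u\in E$ for $u\in n(P,F)$, which contribute nothing to \eqref{prp1}). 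Since $\tau_k(P_i,F(d,d-k))=\omega_{d-k}V_k(P_i)$ and $V_k(P_i)\to V_k(K)$, the total masses are bounded, so by weak compactness of measures on the compact space $F(d,d-k)$ we may pass to a subsequence along which $\tau_k(P_i,\cdot)$ converges weakly to a Borel measure $\tau_k(K,\cdot)$.

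It remains to pass to the limit in the integral identity. The left-hand side is unproblematic: $v_k(\cdot,E)$ is continuous on $\mathcal K$ (indeed $K\mapsto K|E$ is continuous in the Hausdorff metric and so is $V_k$), so $v_k(P_i,E)\to v_k(K,E)$. The difficulty is the right-hand side, because the integrand $(u,U)\mapsto \langle E,U^\perp\rangle^2\|u|E^\perp\|^{k-d}$ is \emph{not} bounded and not continuous on $F(d,d-k)$: it blows up as $u\to E$. So weak convergence does not apply directly. To handle this I would fix $E$ and split the integrand. On the region where $\|u|E^\perp\|\ge\delta$ the integrand is continuous and bounded, and $\int_{\{\|u|E^\perp\|\ge\delta\}}\frac{\langle E,U^\perp\rangle^2}{\|u|E^\perp\|^{d-k}}\,\tau_k(P_i,d(u,U))$ converges to the corresponding integral against $\tau_k(K,\cdot)$ for every continuity value $\delta$ of the limit measure's distribution of $\|u|E^\perp\|$. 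For the remaining region $\{\|u|E^\perp\|<\delta\}$ one needs a uniform (in $i$) smallness bound, and this is the key estimate: from the geometric meaning of the integrand — $\langle E,U^\perp\rangle^2\|u|E^\perp\|^{k-d}=\langle u^\perp\cap U, u^\perp\cap E^\perp\rangle^2\|u|E^\perp\|^{k-d-2}$ by \eqref{exchange}, with $\langle u^\perp\cap U, u^\perp\cap E^\perp\rangle^2\le 1$ — together with Proposition \ref{PRP1} applied to subspaces $E'$ near $E$ and an integration over such $E'$, one obtains that $\int_{\{\|u|E^\perp\|<\delta\}}\cdots\,\tau_k(P_i,d(u,U))$ is bounded by a quantity that tends to $0$ as $\delta\to 0$, uniformly in $i$ (this is essentially the content of the ``further arguments'' alluded to after Theorem \ref{TH1}, and is the same mechanism that makes \eqref{th1} extend for $d=3,k=1$). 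A clean way to package this is: the function $E'\mapsto v_k(P_i,E')$ is uniformly bounded on $G(d,k)$, and averaging \eqref{prp1} over $E'$ in a small neighbourhood of $E$ turns the singular factor $\|u|E'^\perp\|^{k-d}$ into an integrable kernel, giving a bound on the tail contribution that is independent of $i$ and vanishes as the neighbourhood shrinks.

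Granting this uniform tail bound, the conclusion follows by the standard three-$\varepsilon$ argument: choose $\delta$ small so the tail integrals (for all $P_i$, and hence for $\tau_k(K,\cdot)$ by Fatou) are $<\varepsilon$; on the truncated region use weak convergence at a continuity level $\delta$; let $i\to\infty$ and then $\delta\to0$. This yields \eqref{prp2} for the fixed $E$, and since $E\in G(d,k)$ was arbitrary, for all $E$, with the single measure $\tau_k(K,\cdot)$ obtained from the chosen subsequence. The main obstacle is thus not the compactness step but establishing the $i$-uniform control of the integral near the singularity $\{u\in E\}$; everything else is routine.
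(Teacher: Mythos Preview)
Your overall architecture matches the paper's proof exactly: pass to a weakly convergent subsequence of $\tau_k(P_i,\cdot)$, truncate the singular integrand at level $\{\|u|E^\perp\|\ge\delta\}$, use weak convergence on the bounded part, and control the tail $J_i(\delta):=\int_{\{\|u|E^\perp\|<\delta\}}\frac{\langle E,U^\perp\rangle^2}{\|u|E^\perp\|^{d-k}}\,\tau_k(P_i,d(u,U))$ uniformly in $i$. The paper even organizes the two inequalities slightly differently (a monotone limit of continuous minorants $f_\varepsilon$ for ``$\le$'', an upper semicontinuous majorant $\tilde f_\varepsilon$ for ``$\ge$''), but that is cosmetic.

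The genuine gap is your treatment of the tail bound. You propose to obtain it by averaging \eqref{prp1} over $E'$ in a small neighbourhood $N$ of $E$, using that $v_k(P_i,E')$ is uniformly bounded. But this yields control of $\int_{F(d,d-k)}\Bigl(\int_{N}\frac{\langle E',U^\perp\rangle^2}{\|u|E'^\perp\|^{d-k}}\,dE'\Bigr)\tau_k(P_i,d(u,U))$, and the inner averaged kernel does \emph{not} dominate the pointwise kernel at the fixed $E$: when $u$ is very close to $E$ the pointwise value blows up while the average over $E'\in N$ stays bounded (by a constant depending on $N$). So the inequality runs the wrong way, and the averaging idea does not give $J_i(\delta)\to 0$ uniformly in $i$ for the fixed $E$. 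At best a rotation average yields the result for $\nu_k^d$-almost every $E$, which is weaker than the statement.

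The paper closes this gap by a direct geometric estimate rather than any averaging. Going back to \eqref{3.2}, one rewrites
\[
J_i(\varepsilon)\ \le\ \omega_{d-k}^{-1}\sum_{\substack{F\in\mathcal F_k(P_i)\\ n(P_i,F)\cap Z(E,\varepsilon)\neq\emptyset}} V_k(F|E)\int_{S^{d-1-k}(E^\perp)}{\bf 1}\{N(P_i,F)\cap E_v^+\neq\emptyset\}\,\mathcal H^{d-1-k}(dv),
\]
with the zonal set $Z(E,\varepsilon)=\{u\in S^{d-1}:\|u|E^\perp\|\le\varepsilon\}$. For almost every $v$ the projected faces have disjoint relative interiors, so $J_i(\varepsilon)$ is bounded by the $k$-volume of the union of those $F|E$. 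A short geometric lemma then shows: if a boundary point of $P_i$ has an outer normal with $\|u|E^\perp\|\le\varepsilon$, its projection to $E$ lies within distance $\varepsilon\,D(K)$ of $\relbd(K|E)$. Hence the union above is contained in an $\varepsilon D(K)$-collar of $\relbd(K|E)$, whose $\mathcal H^k$-measure is $\le c(k,K)\,\varepsilon$, uniformly in $i$. This is the missing mechanism; once you have $J_i(\varepsilon)\le c(k,K)\varepsilon$, your three-$\varepsilon$ argument goes through for every $E$.
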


\begin{proof}
As we have already explained, we may assume that there is a sequence $(P_i)$ of polytopes converging to $K$, and such that the measures $\tau_k(P_i,\cdot)$ converge weakly, as $i\to \infty$. Let $\tau_k(K,\cdot)$ be the limit measure. 

In the following, we fix $E\in G(d,k)$ and define, for $\varepsilon >0$, the function
$$
f_\varepsilon (u,U) := \omega_{d-k}^{-1}\frac{\langle E,U^\perp\rangle^2}{\max (\varepsilon, \| u|E^\perp\|)^{d-k}}, \quad (u,U)\in F(d,d-k)\, .
$$
This function is continuous on $F(d,d-k)$ and increases, as $\varepsilon$ decreases. The (monotone) limit $f := \lim_{\varepsilon \to 0+}f_\varepsilon $ is finite for all $u\notin E$ and is given by the integrand in \eqref{prp2} (multiplied by $\omega_{d-k}^{-1}$). Combining monotone convergence and weak convergence, we thus get
\begin{align*}
\omega_{d-k}^{-1}\int_{F(d,d-k)}& \frac{\langle E,U^\perp\rangle ^2}{\|u|E^\perp\|^{d-k}}\ \tau_k(K,d(u,U))\cr
&= \lim_{\varepsilon \to 0+} \int_{F(d,d-k)}f_\varepsilon (u,U)\ \tau_k(K,d(u,U))\cr
&= \lim_{\varepsilon \to 0+} \lim_{i\to\infty}\int_{F(d,d-k)}f_\varepsilon (u,U)\ \tau_k(P_i,d(u,U))\cr
&\le \lim_{i\to\infty} \int_{F(d,d-k)}f(u,U)\ \tau_k(P_i,d(u,U))\cr
&=  \lim_{i\to\infty} v_k(P_i,E) = v_k(K,E).
\end{align*}
For the reverse inequality, we use the function $\tilde f_\varepsilon$, given by
$$
\tilde f_\varepsilon (u,U) := f(u,U)\cdot {\bf 1}\{ \| u|E^\perp\|\ge\varepsilon\}, \quad (u,U)\in F(d,d-k)\, ,
$$
which is upper semi-continuous on $F(d,d-k)$. Again by weak convergence, using \cite[Theorem 9.1.5 (v)]{Str},
we obtain
\begin{align*}
\omega_{d-k}^{-1}&\int_{F(d,d-k)} \frac{\langle E,U^\perp\rangle ^2}{\|u|E^\perp\|^{d-k}}\ \tau_k(K,d(u,U))\cr
&\ge  \int_{F(d,d-k)}\tilde f_\varepsilon (u,U)\ \tau_k(K,d(u,U))\cr
&\ge  \limsup_{i\to\infty}\int_{F(d,d-k)}\tilde f_\varepsilon (u,U)\ \tau_k(P_i,d(u,U))\cr
&= \limsup_{i\to\infty}\Bigl( v_k(P_i,E) - \underbrace{\int_{F(d,d-k)}f(u,U){\bf 1}\{ \| u|E^\perp\|<\varepsilon\}\ \tau_k(P_i,d(u,U))}_{=:J_i}\Bigr)\cr
&\ge v_k(K,E) - \varepsilon c(k,K) \cr
&\rightarrow_{\varepsilon\to 0+} v_k(K,E) ,
\end{align*}
provided we have $J_i\le \varepsilon c(k,K)$, for some constant $c(k,K)$ independent of $i$. To show this, we notice that
\begin{align*}
J_i &= \omega_{d-k}^{-1} \sum_{F\in {\mathcal F}_k(P_i)}\langle E,F\rangle^2V_k(F)\int_{n(P_i,F)} \|u|E^\perp\|^{k-d}{\bf 1}\{ \| u|E^\perp\|<\varepsilon\}\, {\mathcal H}^{d-1-k}(du)\cr
&\le  \omega_{d-k}^{-1} \sum_{{F\in {\mathcal F}_k(P_i)}\atop{n(P_i,F) \cap Z(E,\varepsilon)\not=\emptyset}}\langle E,F\rangle^2V_k(F)\int_{n(P_i,F)} \|u|E^\perp\|^{k-d} \, {\mathcal H}^{d-1-k}(du),
\end{align*}
where $Z(E,\varepsilon)\subset S^{d-1}$ is the zonal set
$$
Z(E,\varepsilon) :=\{ u\in S^{d-1} : \| u|E^\perp\| \le\varepsilon \} .
$$
As in the proof of Proposition \ref{PRP1}, we conclude from \eqref{3.2} that
\begin{align*}
J_i &\le  \omega_{d-k}^{-1}\int_{S^{d-1-k}(E^\perp)}\sum_{{F\in {\mathcal F}_k(P_i)}\atop{n(P_i,F)\cap Z(E,\varepsilon)\not=\emptyset}} \cr
&\hspace{3cm}\times V_k(F|E){\bf 1}\{N(P_i,F)\cap E_v^+ \not=\emptyset\}\ {\cal H}^{d-1-k}(dv).\cr
\end{align*}
The argument in \cite[p.~292]{Schn93} shows that the projections $F|E$ of the faces occurring in the sum have pairwise disjoint relative interiors 
for ${\cal H}^{d-1-k}$-almost all $v\in S^{d-1-k}(E^\perp)$. Hence

$$
J_i \le {\cal H}^k\Bigl( \bigcup_{{F\in {\mathcal F}_k(P_i)}\atop{n(P_i,F)\cap Z(E,\varepsilon)\not=\emptyset}}(F|E)\Bigr)
\le \varepsilon c(k,K),
$$
where the final inequality is a consequence of the following lemma.
\end{proof}

\begin{lem}
Let $K\in \mathcal{K}$, $k\in\{1,\ldots,d-1\}$, $L\in G(d,k)$ and $\varepsilon\in(0,1)$. Assume that $x\in \bd K$ has an exterior unit normal
$u$ satisfying $\|u | L\|\ge \sqrt{1-\varepsilon^2}$. Then $x | L\in \relbd (K | L)+\varepsilon D(K) B_L$, where $D(K)$ is the diameter of $K$ and $B_L$ is the unit ball in $L$.
\end{lem}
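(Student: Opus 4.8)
The claim is geometrically natural: if the exterior normal $u$ at $x$ is nearly parallel to $L$ (i.e.\ $\|u|L\|\ge\sqrt{1-\varepsilon^2}$, so the angle between $u$ and $L$ is small), then the projection $x|L$ should lie close to the relative boundary of $K|L$, because $u|L$ is an ``almost supporting'' direction for $K|L$ within $L$. I would make this precise by locating, on the segment from $x|L$ in the direction $u|L$ inside $L$, a point which actually lies in $\relbd(K|L)$, and then bounding the distance travelled along that segment by $\varepsilon D(K)$.

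First I would set up notation: write $u = u|L + u|L^\perp$, put $\ell := u|L$ and note $\|\ell\| \ge \sqrt{1-\varepsilon^2}$, hence $\|u|L^\perp\| = \sqrt{1-\|\ell\|^2}\le\varepsilon$. If $\ell = 0$ there is nothing to do (then $\|u|L\|=0<\sqrt{1-\varepsilon^2}$, excluded), so $v := \ell/\|\ell\|\in L\cap S^{d-1}$ is well defined. The key computation is to compare the support function of $K$ in direction $u$ with the support function of $K|L$ in direction $v$ (within $L$). For any $y\in K$, $\langle y, u\rangle = \langle y|L, \ell\rangle + \langle y|L^\perp, u|L^\perp\rangle \le \langle y|L,\ell\rangle + \|u|L^\perp\|\,D(K)$ up to a translation-dependent constant; more cleanly, I would work with differences: for $y\in K$,
$$\langle y|L - x|L,\, v\rangle = \frac{1}{\|\ell\|}\Big(\langle y - x, u\rangle - \langle y|L^\perp - x|L^\perp,\, u|L^\perp\rangle\Big) \le \frac{1}{\|\ell\|}\big(0 + \|u|L^\perp\|\,D(K)\big),$$
using $\langle y-x,u\rangle\le 0$ (since $u$ is exterior normal at $x$) and Cauchy--Schwarz with $\|y|L^\perp - x|L^\perp\|\le D(K)$. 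Since this holds for every $y\in K$, it holds for every point of $K|L$, so
$$h(K|L, v) - \langle x|L, v\rangle \le \frac{\|u|L^\perp\|}{\|\ell\|}\,D(K) \le \frac{\varepsilon}{\sqrt{1-\varepsilon^2}}\,D(K).$$
Hmm — this gives the bound with an extra factor $1/\sqrt{1-\varepsilon^2}$, so I would instead avoid dividing by $\|\ell\|$: from $\langle y - x,\,u\rangle\le 0$ directly, $\langle y|L - x|L,\,\ell\rangle \le -\langle y|L^\perp - x|L^\perp,\,u|L^\perp\rangle \le \varepsilon D(K)$, i.e.\ $h(K|L,\ell) - \langle x|L,\ell\rangle \le \varepsilon D(K)$, which is the estimate I want with no spurious constant (note $\|\ell\|\le 1$).

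Second, I would convert this ``slab'' estimate into the stated conclusion. Let $z := x|L$. The point $z' := z + t\,v \in L$ where $t := h(K|L,v) - \langle z, v\rangle \ge 0$ satisfies $\langle z', v\rangle = h(K|L, v)$, so $z'$ lies on the supporting hyperplane of $K|L$ (within $L$) with outer normal $v$; hence $z' \in \relbd(K|L)$ provided $z' \in K|L$. To see $z'\in K|L$: the segment $[z, z']$ — wait, $z = x|L\in K|L$ and $K|L$ is convex, but $z'$ need not lie in $K|L$ a priori. Better: take $z'' $ to be the point where the ray from $z$ in direction $v$ exits $K|L$; since $z\in K|L$ this exit point exists and lies in $\relbd(K|L)$, and it is at distance at most $t' := h(K|L,v)-\langle z,v\rangle$... actually the exit point is at distance exactly equal to the ``chord length'', which is $\le \operatorname{diam}(K|L)\le D(K)$ trivially but I need $\le\varepsilon D(K)$. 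The clean argument: the exit point $w\in\relbd(K|L)$ has $\langle w - z, v\rangle = $ (distance from $z$ to $w$ along $v$) $\le h(K|L,v) - \langle z,v\rangle \le \varepsilon D(K)$ by the estimate above (here I use $\|\ell\| v = \ell$, so $h(K|L,\ell) - \langle z,\ell\rangle = \|\ell\|(h(K|L,v)-\langle z,v\rangle)$, and since $h(K|L,\ell)-\langle z,\ell\rangle\le\varepsilon D(K)$ while I need to divide by $\|\ell\|\ge\sqrt{1-\varepsilon^2}$ — the factor reappears).

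**Resolving the constant.** To get exactly $\varepsilon D(K)$ without the $\sqrt{1-\varepsilon^2}$, I would not normalize: the exit point $w$ satisfies $w - z = s\,v$ for some $s\ge 0$ with $\|w-z\| = s$, and $\langle w - z,\,\ell\rangle = s\|\ell\|$. The slab estimate gives $\langle w|L - z,\,\ell\rangle = \langle w - z,\ell\rangle \le h(K|L,\ell) - \langle z,\ell\rangle\le\varepsilon D(K)$ — wait, that only bounds $s\|\ell\|$, not $s$. So the honest bound is $s \le \varepsilon D(K)/\|\ell\| \le \varepsilon D(K)/\sqrt{1-\varepsilon^2}$.

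I suspect the intended reading of the lemma is that this constant is absorbed (e.g.\ the application only needs a bound of the form $O(\varepsilon)D(K)$, and indeed in the proof of Proposition~\ref{PRP2} it is used with $\varepsilon$ replaced by a quantity $\le\varepsilon$ via $\|u|E^\perp\|\le\varepsilon \Leftrightarrow \|u|E\|\ge\sqrt{1-\varepsilon^2}$, after which $J_i\le\varepsilon c(k,K)$ tolerates constant factors in $c(k,K)$). So my final plan is: \emph{(i)} establish $h(K|L,u|L) - \langle x|L,\,u|L\rangle \le \varepsilon D(K)$ by the Cauchy--Schwarz argument above using $\langle y-x,u\rangle\le 0$; \emph{(ii)} let $w$ be the exit point of the ray $\{x|L + s(u|L): s\ge 0\}$ from $K|L$, which lies in $\relbd(K|L)$; \emph{(iii)} bound $\|w - x|L\| = s^* \le \varepsilon D(K)/\|u|L\|$ from step (i); \emph{(iv)} conclude $x|L = w - s^*(u|L)/\|\dots\|\cdot\dots \in \relbd(K|L) + \varepsilon D(K) B_L$ after cleaning up the normalization (possibly replacing $\varepsilon$ by $\varepsilon/\sqrt{1-\varepsilon^2}$, or re-deriving with the sharper constant if the clean statement is really needed). \textbf{The main obstacle} is precisely tracking this normalization factor and confirming whether the exact constant $\varepsilon D(K)$ (rather than $\varepsilon D(K)/\sqrt{1-\varepsilon^2}$) is attainable — likely it is, by being slightly more careful: since $w\in\relbd(K|L)$ and $v$ is the outer normal there, in fact $\langle w - z, v\rangle = h(K|L,v) - \langle z,v\rangle$ exactly, and one can also run the estimate with $y = w$ lifted to a point of $K$ projecting onto $w$, using the \emph{full} vector $u$ and the fact that $\|y - x\|\le D(K)$ splits as $\|y|L - x|L\|^2 + \|y|L^\perp - x|L^\perp\|^2$, which should yield the clean bound.
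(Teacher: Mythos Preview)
Your approach is correct and genuinely different from the paper's. The paper does not use the support-function/Cauchy--Schwarz estimate $\langle y-x,u\rangle\le 0$ at all; instead it chooses a boundary point $y\in\bd K$ whose exterior normal is $v:=(u|L)/\|u|L\|$, projects everything to the $2$-plane $E:=\lin\{u,v\}$, and uses a planar convexity argument (normals $u,v$ at $x|E,y|E$ with $\langle u,v\rangle\ge\sqrt{1-\varepsilon^2}$ force $|\langle x|E-y|E,v\rangle|\le\varepsilon\|x|E-y|E\|$), which immediately gives the clean constant. Your route via $\langle y-x,u\rangle\le 0$ is more direct and avoids the auxiliary boundary point and the $2$-dimensional reduction, at the cost of the normalization bookkeeping you noticed.

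Your final Pythagorean idea does give the exact constant, and you should simply carry it out rather than leave it as ``should yield''. With $w\in\relbd(K|L)$ the exit point, $w-x|L=sv$, and $y\in K$ any lift with $y|L=w$, set $a:=\|y|L^\perp-x|L^\perp\|$; then your inequality reads $s\|\ell\|\le a\sqrt{1-\|\ell\|^2}$, while $s^2+a^2=\|y-x\|^2\le D(K)^2$. Eliminating $a$ gives $s^2/(1-\|\ell\|^2)\le s^2+a^2\le D(K)^2$, hence $s\le\sqrt{1-\|\ell\|^2}\,D(K)\le\varepsilon D(K)$, which is exactly the stated bound. So the only genuine gap in your write-up is that this three-line computation is asserted rather than done; otherwise your argument is complete and, arguably, more elementary than the paper's.
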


\begin{proof}
Let $x\in \bd K$ with an exterior unit normal
$u$ such that $\|u | L\|\ge \sqrt{1-\varepsilon^2}$. We may assume that $x=0$. Then $v:=(u|L)/\|u|L\|$ satisfies $\langle u,v\rangle\ge\sqrt{1-\varepsilon^2}$. Choose some $y\in\bd K$ with
exterior unit normal $v$ and put $g:=\lin \{v\}$. Hence, $g\subset L$, $x=x|g=x|L\in K|L$ and $y|g\notin \mathrm{relint}(K|L)$. Thus there is some $z\in[x|g,y|g]\cap \mathrm{relbd}  (K|L)$, and therefore $\|x|L-z\|\le \|x|g-y|g\|$.

Define $E:=\lin \{u,v\}$. Then $x|E$ and $y|E$ are points in the relative
boundary of $K|E$ with exterior unit normals $u$ and $v$, respectively. The convexity of $K|E$ and $\langle u,v\rangle\ge\sqrt{1-\varepsilon^2}$ then imply that
$\|x|g-y|g\|\le \varepsilon \|x|E-y|E\|\le \varepsilon \|x-y\|$, which yields the assertion.
\end{proof}

Proposition \ref{PRP2} shows, in particular, that for each $E\in G(d,k)$ the function $f : (u,U)\mapsto \frac{\langle E,U^\perp\rangle ^2}{\|u|E^\perp\|^{d-k}}$, which may be unbounded, is integrable with respect to $\tau_k(K,\cdot )$.
Using \eqref{exchange} in \eqref{prp2}, we obtain an extension of Theorem \ref{TH1} to arbitrary bodies $K$.

\section{Proof of Theorem~\ref{TH2}}

In this section we present a proof of Theorem~\ref{TH2} (in a slightly more general version) using two substantial ingredients. The first is an integral formula for the projection function (Lemma~\ref{L_Daniel}) proved in  \cite{Hug}; we present the proof here for completeness. The second is an integral formula on the Grassmannian (Lemma~\ref{L_Grassm}),  which we derive by using a technique from \cite{HRW}.
This approach is different from the one presented in \cite{Hind}, but we will use some techniques from \cite{Hind} in the next section to produce an explicit solution of the relevant integral equation.

The following lemma gives the analog of \eqref{Vj}, for projection functions.

\begin{lemma}  \label{L_Daniel}
Given $K\in{\mathcal K}$, $1\leq k\leq d-1$ and $E\in G(d,k)$, we have
\begin{align*}
V_k(K|E)&=\frac 1{\omega_{d-k}}\int_{\nor K}\| u | E^\perp \|^{k-d}\\
&\qquad \times\sum_{|I|=k}\bK_{I^c}(K;x,u)\langle A_I(K;x,u),E\rangle^2\, \cH^{d-1}(d(x,u)).
\end{align*}
\end{lemma}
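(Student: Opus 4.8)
The plan is to reduce the statement to the analogous formula \eqref{Vj} for the intrinsic volume $V_k$, but applied to the projected body $K|E$ inside the $k$-dimensional space $E$, and then to relate the generalized curvatures of $K|E$ in $E$ to those of $K$ in $\R^d$ via the normal bundle. First I would note that $V_k(K|E)$ is just the $k$-volume of $K|E$ in the space $E$; applying \eqref{Vj} with ambient dimension $k$ gives an integral over $\nor_E(K|E)$ (the unit normal bundle of $K|E$ relative to $E$) of $\bK_\emptyset$-type terms, which for the top-dimensional volume is essentially $\tfrac12\cH^{k-1}$-integration over the boundary, so really $V_k(K|E)=\cH^k(K|E)$ can be written directly via an integral over $\bd_E(K|E)$ or over $\nor_E(K|E)$. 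The substance is to push this back to $\nor K$.

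The key geometric step is the following: for $\cH^{d-1}$-almost all $(x,u)\in\nor K$ with $u\notin E$, the projected point $x|E$ lies in $\bd_E(K|E)$ precisely when $u$ is the (unique up to sign within the relevant cone) outer normal, and the outer unit normal to $K|E$ at $x|E$ in $E$ is $(u|E)/\|u|E\|$. One then has a covering/change-of-variables relation between $\nor K$ and $\nor_E(K|E)$: projecting $(x,u)\mapsto(x|E,(u|E)/\|u|E\|)$ maps (a full-measure part of) $\nor K$ onto $\nor_E(K|E)$, and I would compute the Jacobian of this map. The point $x|E$ contributes to $\bd_E(K|E)$ with multiplicity coming from the fibre of directions, and the principal curvatures transform: the curvature form of $K|E$ at $x|E$ is obtained from that of $K$ at $(x,u)$ by projecting the second fundamental form onto $E$ and rescaling by powers of $\|u|E\|$. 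Concretely, I expect the Jacobian factor to produce the weight $\|u|E^\perp\|^{k-d}$ together with the sum $\sum_{|I|=k}\bK_{I^c}(K;x,u)\langle A_I(K;x,u),E\rangle^2$: the index set $I$ (with $|I|=k$) records which $k$-dimensional "block" of principal directions survives under projection to $E$, $\langle A_I,E\rangle^2$ is the squared Jacobian of projecting that block onto $E$, and $\bK_{I^c}$ collects the complementary curvatures that get "integrated out" (i.e. that contribute to the normal bundle fibre rather than to the volume). This is exactly the same combinatorial bookkeeping that appears in \eqref{geommes} and in Proposition \ref{lemmarelation}, and in fact an alternative route would be to combine Proposition \ref{lemmarelation} (taking $g(u,U)$ to be a suitable function of $U$ relative to $E$) or the known flag-to-projection identities from \cite{HRW} directly; but since the lemma is attributed to \cite{Hug}, the cleanest self-contained argument is the direct normal-bundle computation sketched above.

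The main obstacle will be the careful measure-theoretic handling of the degenerate set and of the multiplicities. One has to deal with (i) the set of $(x,u)\in\nor K$ with $u\in E$, where $x|E$ is an interior-type point or the weight $\|u|E^\perp\|^{k-d}$ blows up — this is where the convention $0\cdot\infty:=0$ and an $\cH^{d-1}$-null-set argument are needed; (ii) the fact that $x|E$ may be hit by several pairs $(x,u)$ (faces folding over under projection), so one must check that the signed/absolute contributions add up correctly to $\cH^k(K|E)$ rather than overcounting — here the disjoint-relative-interior argument of Schneider (as already invoked in the proof of Proposition \ref{PRP2}) is the right tool; and (iii) the genuinely non-smooth case, where principal curvatures are $0$ or $\infty$, which is handled by the standard approximation/Federer-type coarea machinery underlying \eqref{Vj} and \eqref{geommes}. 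Once these technical points are in place, the identity follows by assembling the Jacobian computation with \eqref{Vj} applied in $E$.
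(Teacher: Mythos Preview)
Your proposal takes a route quite different from the paper's, and its central geometric claim is incorrect.

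The paper does not work with $\nor_E(K|E)$ at all. It starts from the mixed-volume identity $V_k(K|E)=\kappa_{d-k}^{-1}V^{(0)}_{k,d-k}(K,B^{E^\perp})$, where $B^{E^\perp}$ is the unit ball in $E^\perp$, and then applies the general curvature formula for translative mixed functionals from \cite{HRW} (their equation~(7)). That formula expresses $V^{(0)}_{k,d-k}(K,B^{E^\perp})$ as a double integral over $\nor K\times\nor B^{E^\perp}$ involving the angular kernel $F_{k,d-k}(\angle(u,v))$ and the curvature data of both bodies. Since the principal curvatures of $B^{E^\perp}$ are trivial to list (zero in the $E^\perp$-directions, infinite in the $E$-directions), only one index set $J_0$ survives on the $B^{E^\perp}$ side, with $A_{J_0}+\langle v\rangle=E$; the $\nor B^{E^\perp}$-integration then collapses to a spherical integral $\int_{S^{k-1}(E)}F_{k,d-k}(\angle(u,v))\,\cH^{k-1}(dv)$, which an identity from \cite{R99} evaluates as $\omega_{d-k}^{-1}\|u|E^\perp\|^{k-d}$. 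No change of variables on $\nor K$ is carried out.

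The gap in your sketch is that the map $(x,u)\mapsto\bigl(x|E,(u|E)/\|u|E\|\bigr)$ does \emph{not} send $\nor K$ into $\nor_E(K|E)$. For a generic $(x,u)\in\nor K$ the point $x|E$ lies in the relative \emph{interior} of $K|E$: one has $x|E\in\relbd(K|E)$ if and only if $N(K,x)\cap E\neq\{0\}$, which for a smooth strictly convex body means $u\in E$, an $\cH^{d-1}$-null condition. Even when $x|E$ does lie on $\relbd(K|E)$, the vector $(u|E)/\|u|E\|$ is in general not an outer normal of $K|E$ there (write $u=u|E+u|E^\perp$ and observe that $\langle u|E,z-x\rangle$ has no definite sign for $z\in K$). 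So there is no Jacobian to compute along the lines you describe, and the dimensions do not match either: $\nor K$ is $(d-1)$-dimensional while $\nor_E(K|E)$ is $(k-1)$-dimensional, and $V_k(K|E)$ is a $k$-volume, not a boundary quantity in $E$. A genuine direct argument would have to use a coarea decomposition of $\nor K$ over $K|E$ with $(d-1-k)$-dimensional fibres and then identify the fibre integrals with the curvature sums $\sum_{|I|=k}\bK_{I^c}\langle A_I,E\rangle^2$; carrying this out is essentially equivalent to the translative formula the paper invokes, and is not what your sketch provides.
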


\begin{proof}
The projection function can be expressed as a mixed volume of $K$ with the unit ball $B^{E^\perp}$ in $E^\perp$ and therefore as a  mixed functional from translative integral geometry,
$$V_k(K|E)=\binom dk\kappa_{d-k}^{-1}V(K[k],B^{E^\perp}[d-k])= \kappa_{d-k}^{-1}V^{(0)}_{k,d-k}(K,B^{E^\perp}),$$
see \cite[(5.68)]{S} and \cite[p. 242]{SW}. 
For the latter, we can use \cite[Eq.~(7)]{HRW}. Notice that if $y$ is an interior point of $B^{E^\perp}$ and $v\in E\cap S^{d-1}$ then $(y,v)\in\nor B^{E^\perp}$ and there are $d-k$ zero principal curvatures at $(y,v)$, say $k_1,\ldots, k_{d-k}$, with principal directions in $E^\perp$, and the remaining $k-1$ principal curvatures are infinite and their principal direction lie in $E$. Hence, if $J_0=\{ d-k+1,\ldots, d\}$ then $\bK_{J_0}(B^{E^\perp};y,v)=1$ and $A_{J_0}(B^{E^\perp};y,v)+\langle v\rangle=E$, and $\bK_{J}(B^{E^\perp};y,v)=0$ for all other index sets $J$ of cardinality $k$. Hence, \cite[Eq.~(7)]{HRW} yields
\begin{align*}
V_k(K|E)&=\frac 1{\kappa_{d-k}}\int_{\nor K}\int_{B^{E^\perp}}\int_{S^{k-1}(E)}F_{k,d-k}(\angle(u,v))\\
&\qquad \times \sum_{|I|=k}\bK_{I^c}(K;x,u)\langle A_I(K;x,u),E\rangle^2\\
&\qquad\qquad \times \cH^{k-1}(dv)\,\cH^{d-k}(dy)\,\cH^{d-1}(d(x,u))\\
&=\int_{\nor K} \sum_{|I|=k}\bK_{I^c}(K;x,u)\langle A_I(K;x,u),E\rangle^2\\
&\qquad\times\int_{S^{k-1}(E)}F_{k,d-k}(\angle(u,v))\,\cH^{k-1}(dv)\,\cH^{d-1}(d(x,u)),
\end{align*}
where 
$$F_{k,d-k}(\theta )=\frac 1{\omega_d}\frac\theta{\sin\theta}\int_0^1\left(\frac{\sin t\theta}{\sin\theta}\right)^{d-1-k} \left(\frac{\sin (1-t)\theta}{\sin\theta}\right)^{k-1}\, dt$$
for $\theta\in [0,\pi)$.
The proof is finished by applying equation (4.1) in \cite{R99} showing that
$$\int_{S^{k-1}(E)}F_{k,d-k}(\angle(u,v))\,\cH^{k-1}(dv)=\frac 1{\omega_{d-k}}\| u | E^\perp\|^{k-d}.$$
\end{proof}

The second ingredient we need is the following integral geometric lemma on the Grassmannian.

\begin{lemma}\label{L_Grassm}
Given $0\leq k\leq d$ and $A\in G(d,k)$, there exists a continuous function $\varphi^d_{k;A}:G(d,k)\to\R$ such that
$$\int_{G(d,k)}\varphi^d_{k;A}(U)\langle U,B\rangle^2\, \nu^d_k(dU)=\langle A,B\rangle^2$$
for any $B\in G(d,k)$.
\end{lemma}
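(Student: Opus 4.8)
The plan is to read the asserted identity as an integral equation and to solve it by a short functional-analytic argument resting on one structural observation. For $A,B\in G(d,k)$ put $g_A(B):=\langle A,B\rangle^2$, and let $T$ denote the integral operator on $L^2(G(d,k),\nu^d_k)$ with (symmetric) kernel $(U,B)\mapsto\langle U,B\rangle^2$; the claim is then that $T\varphi^d_{k;A}=g_A$ for a suitable continuous function $\varphi^d_{k;A}$. The key point is that
$$W:=\mathrm{span}\{\,g_U:U\in G(d,k)\,\}$$
is a finite-dimensional subspace of $C(G(d,k))$. Indeed, if $e_U,e_B$ are unit simple $k$-vectors representing $U$ and $B$, then $g_U(B)=\langle e_U,e_B\rangle^2=\langle e_Ue_U^\top,e_Be_B^\top\rangle_{\mathrm{HS}}$, where $ee^\top$ is the orthogonal projection onto the line $\R e$ and $\langle\cdot,\cdot\rangle_{\mathrm{HS}}$ is the Hilbert--Schmidt inner product on symmetric operators on $\Lambda^k\R^d$; the sign ambiguity in $e_B$ cancels, so $B\mapsto e_Be_B^\top$ is a well-defined continuous map into the finite-dimensional space $\mathrm{Sym}(\Lambda^k\R^d)$, and each $g_U$ is the composition of this map with a fixed linear functional. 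Hence $W$ lies in a fixed finite-dimensional subspace of $C(G(d,k))$.

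Next I would determine $\ker T$. Since the kernel is symmetric, $T$ is self-adjoint, and $(T\varphi)(B)=\langle\varphi,g_B\rangle_{L^2}$; therefore $T\varphi=0$ if and only if $\varphi\perp g_B$ for every $B$, that is, $\ker T=W^\perp$. As $T$ is self-adjoint, $\overline{\mathrm{range}\,T}=(\ker T)^\perp=(W^\perp)^\perp=W$ (using that the finite-dimensional $W$ is closed), so $T$ actually maps $L^2$ into $W$ and restricts to an endomorphism $T|_W$ of the finite-dimensional space $W$. This restriction is injective, because $\ker(T|_W)\subseteq\ker T\cap W=W^\perp\cap W=\{0\}$, hence bijective. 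Since $g_A\in W$, we may set
$$\varphi^d_{k;A}:=(T|_W)^{-1}g_A\in W\subseteq C(G(d,k)),$$
which is a real-valued continuous function on $G(d,k)$ satisfying $T\varphi^d_{k;A}=g_A$; spelling this out gives precisely $\int_{G(d,k)}\varphi^d_{k;A}(U)\langle U,B\rangle^2\,\nu^d_k(dU)=\langle A,B\rangle^2$ for all $B\in G(d,k)$. (For $k\in\{0,d\}$ the manifold $G(d,k)$ is a single point and the statement is trivial.)

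I expect the only real work to be the structural fact that $W$ is a finite-dimensional subspace of $C(G(d,k))$ --- everything afterwards is soft Hilbert-space theory --- together with the attendant bookkeeping that $T$ maps back into $W$, so that $T|_W$ is a bona fide finite-dimensional operator. I would also note that although this argument is non-constructive, $\varphi^d_{k;A}$ can be written down explicitly: $T$ commutes with the natural $\mathrm{SO}(d)$-action on $G(d,k)$, hence acts as a scalar on each irreducible constituent of $W$, and the resulting eigenvalues can be computed --- this is the approach taken in the next section (and in \cite{Hind}), where such an explicit solution of the integral equation is actually required.
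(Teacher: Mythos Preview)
Your argument is correct and takes a genuinely different route from the paper's proof. The paper works with the explicit family of functions $U\mapsto\langle A,U\rangle_p^2$, $p=0,\dots,k\wedge(d-k)$, and uses the integral identity \eqref{L4} from \cite{HRW} together with the regularity of the matrix $D=(d^{d,k}_{p,q})$ (cited from \cite[Proposition~1]{HRW}) to exhibit $\varphi^d_{k;A}$ as a concrete linear combination of these functions. Your proof, by contrast, is a soft Hilbert-space argument: you identify a finite-dimensional invariant subspace $W\subset C(G(d,k))$ containing both the target $g_A$ and the range of $T$, and then use that $T|_W$ is an injective endomorphism of a finite-dimensional space. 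The paper's approach is constructive and feeds directly into Section~5, where the coefficients $\alpha_p$ are computed explicitly; your approach is self-contained (no appeal to \cite{HRW}), shorter, and makes transparent why a solution must exist. The closing remark about $\mathrm{SO}(d)$-equivariance and scalar action on irreducibles is a nice conceptual aside, though the paper's Section~5 proceeds differently, via the basis $\langle A,\cdot\rangle_p^2$ rather than via an irreducible decomposition.
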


\begin{proof}
In \cite[Lemma~4]{HRW}, it is shown that there exist real constants $d_{p,q}^{d,k}$, $0\leq p,q\leq k\wedge(d-k)$, such that
for any $A,B\in G(d,k)$ and $0\le p\le k\wedge (d-k)$,
\begin{equation} \label{L4}
\int_{G(d,k)}\langle A,U\rangle_p^2\langle U,B\rangle^2\, \nu^d_k(dU)=\sum_{q=0}^{k\wedge(d-k)}d_{p,q}^{d,k}\langle A,B\rangle_q^2.
\end{equation}
Set
$$\varphi^d_{k;A}(U):=\sum_{p=0}^{k\wedge(d-k)}\alpha_p\langle A,U\rangle_p^2,\quad U\in G(d,k),$$
for some constants $\alpha_p$ which will be specified below. Using \eqref{L4}, we obtain
$$\int_{G(d,k)}\varphi^d_{k;A}(U)\langle U,B\rangle^2\, \nu^d_k(dU)=\sum_{p=0}^{k\wedge(d-k)}\sum_{q=0}^{k\wedge(d-k)}\alpha_pd_{p,q}^{d,k}\langle A,B\rangle_q^2.$$
Since the matrix $D=(d_{p,q}^{d,k})_{p,q=0}^{k\wedge(d-k)}$ is regular (see \cite[Proposition~1]{HRW}), we can choose
$$(\alpha_0,\ldots,\alpha_{k\wedge(d-k)})=(1,0,\ldots,0)D^{-1}$$
and obtain
$$\int_{G(d,k)}\varphi^d_{k;A}(U)\langle U,B\rangle^2\, \nu^d_k(dU)=\langle A,B\rangle_0^2=\langle A,B\rangle^2,$$
completing the proof.
\end{proof}

We can now proceed with the proof of Theorem~\ref{TH2}. The idea is to apply the integral representation  \eqref{psij} for the flag measure $\psi_k(K,\cdot)$ with a function $g= g(E,\cdot)$ arising from Lemma \ref{L_Grassm}. To be more precise, let $E\in G(d,k)$ and $u\in S^{d-1}\setminus E$. Then $E^\perp\cap u^\perp\in G(u^\perp,d-1-k)$ and we can apply Lemma \ref{L_Grassm} in $u^\perp$ to obtain a function $\varphi^{u^\perp}_{d-1-k; E^\perp\cap u^\perp}$ such that
$$
\int_{G(u^\perp,d-1-k)}\varphi^{u^\perp}_{d-1-k; E^\perp\cap u^\perp}(W)\langle W,B\rangle^2\,
\nu^{u^\perp}_{d-1-k}(dW)=\langle E^\perp\cap u^\perp,B\rangle^2
$$
for all $B\in G(u^\perp,d-1-k)$. In particular, for $B=A_{I^c}$ with $|I|=k$
(we omit the argument and identify the Grassmannian and the oriented Grassmannian), we get
\begin{align}\label{identify}
&\int_{G(u^\perp,k)}\varphi^{u^\perp}_{d-1-k; E^\perp\cap u^\perp}(W^\perp\cap u^\perp)\langle W ,A_{I}\rangle^2\,
\nu^{u^\perp}_{k}(dW) \nonumber\\
&\qquad=\int_{G(u^\perp,k)}\varphi^{u^\perp}_{d-1-k; E^\perp\cap u^\perp}(W^\perp\cap u^\perp)\langle W^\perp\cap u^\perp,A_{I^c}\rangle^2\,
\nu^{u^\perp}_{k}(dW) \nonumber\\
&\qquad= \int_{G(u^\perp,d-1-k)}\varphi^{u^\perp}_{d-1-k; E^\perp\cap u^\perp}( U)\langle U,A_{I^c}\rangle^2\,\nu^{u^\perp}_{d-1-k}(dU) \nonumber\\
&\qquad=\langle E^\perp\cap u^\perp,A_{I^c}\rangle^2.
\end{align}
For $(u,U)\in F(d,d-k)$ we define
\begin{equation}\label{gdef}
g(E,u,U):=\begin{cases}
\omega_{d-k}^{-1}\|u|E^\perp\|^{k-d+2}\varphi^{u^\perp}_{d-1-k; E^\perp\cap u^\perp}(U\cap u^\perp ),&u\notin E,\\
0,&u\in E.
\end{cases}
\end{equation}
Then we have, using  equations \eqref{psij}, \eqref{gdef}, \eqref{identify}, \eqref{exchange} and Lemma \ref{L_Daniel},
\begin{align*}
&\int g(E,u,U)\, \psi_k(K,d(u,U))\\
&\qquad = \int_{\nor K}\sum_{|I|=k}\bK_{I^c}(K;x,u)
\int_{G(u^\perp,k)}g(E,u,W^\perp)\\
\displaybreak
&\qquad\qquad \times \langle W,A_{I}(K;x,u)\rangle^2\,\nu^{u^\perp}_{k}(dW)\,\cH^{d-1}(d(x,u))\\
\pagebreak
&\qquad = \frac{1}{\omega_{d-k}}\int_{\nor K}\sum_{|I|=k}\bK_{I^c}(K;x,u)\mathbf{1}\{u\notin E\}
\\
&\qquad\qquad \times \int_{G(u^\perp,k)}\|u|E^\perp\|^{k-d+2}
\varphi^{u^\perp}_{d-1-k; E^\perp\cap u^\perp}( W^\perp\cap u^\perp)\\
&\qquad\qquad \times\langle W,A_{I}(K;x,u)\rangle^2\,\nu^{u^\perp}_{k}(dW)\,\cH^{d-1}(d(x,u))\\
&\qquad = \frac{1}{\omega_{d-k}}\int_{\nor K}\sum_{|I|=k}\bK_{I^c}(K;x,u)\mathbf{1}\{u\notin E\}
\|u|E^\perp\|^{k-d+2}\\
&\qquad\qquad\times \langle E^\perp\cap u^\perp,A_{I^c}(K;x,u)\rangle^2\,\cH^{d-1}(d(x,u))\\
&\qquad = \frac{1}{\omega_{d-k}}\int_{\nor K}\sum_{|I|=k}\bK_{I^c}(K;x,u)\mathbf{1}\{u\notin E\}
\|u|E^\perp\|^{k-d}\\
&\qquad\qquad\times\langle E,A_{I}(K;x,u)\rangle^2\,\cH^{d-1}(d(x,u))\\
&\qquad =V_k(K|E),
\end{align*}
where we applied the identity
$$\langle E,A_I\rangle^2=\langle E^\perp\cap u^\perp,A_{I^c}\rangle^2 \| u | E^\perp \|^2$$
in the second to last step and Lemma~\ref{L_Daniel} in the final step. Note that we can omit the indicator function $\mathbf{1}\{u\notin E\}$,
since $u\in E$ implies that $\langle E,A_I\rangle^2=0$ and $0\cdot\infty=0$. 
Thus we obtain \eqref{th2} provided the integrals exist.  

The problem is that the continuous function $\varphi^d_{k;A}$ in Lemma \ref{L_Grassm} will have negative parts, in general (see Theorem \ref{explsol}, for an explicit formula for the coefficients $\alpha_p$). Since the projection factor in $g$ is not bounded, for $k<d-2$, the integrals in the above calculation need not exist (an example of this kind is constructed in \cite{HRW}, in the related case of mixed volumes). The following theorem collects the cases where we have a positive result. Theorem \ref{TH2} follows from part (b).

\begin{theorem} \label{TH2detailed} The assertion of Theorem \ref{TH2} holds for all $K$ and $E$, if $k\in \{d-1,d-2\}$. For the other values of $k$ and given $K\in\mathcal K$, formula \eqref{th2} is valid

\smallskip\noindent
{\bf (a)} for $\rho K$ and $E\in G(d,k)$, for $\nu$-almost all $\rho\in O(d)$,

\smallskip\noindent
{\bf (b)} for $K$ and $\nu^d_k$-almost all $E$,

\smallskip\noindent
{\bf (c)} for $K$ and all $E$ if $S_k(K,\cdot) \ll {\mathcal H}^{d-1}$ with bounded density,

\smallskip\noindent
{\bf (d)} for $K\in{\mathcal P}$ and $E\in G(d,k)$, if $K$ and $E$ are in general relative position, i.e. $L(F)\cap E^\perp =\{ 0\}$, for all $F\in {\mathcal F}_k(K)$.
\end{theorem}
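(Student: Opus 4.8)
The plan is to establish each of the four assertions by controlling the integrability of the integrand $g(E,u,U)$ against $\psi_k(K,\cdot)$, since the formal computation preceding Theorem~\ref{TH2detailed} already shows that \eqref{th2} holds \emph{whenever all the integrals appearing in that chain of equalities are finite}. The only possible source of divergence is the factor $\|u|E^\perp\|^{k-d+2}$ in \eqref{gdef}, which blows up near the set $\{u\in E\}$ when $k\le d-3$; the functions $\varphi^{u^\perp}_{d-1-k;E^\perp\cap u^\perp}$ are bounded uniformly (their coefficients $\alpha_p$ depend only on $d,k$, and $\langle\cdot,\cdot\rangle_p^2\le 1$), so everything reduces to estimating $\int \|u|E^\perp\|^{k-d+2}\,|\psi_k|(K,d(u,U))$, or equivalently, via Proposition~\ref{lemmarelation} and the pushforward onto $S^{d-1}$, an integral of $\|u|E^\perp\|^{k-d+2}$ against a finite measure on $S^{d-1}$ that is dominated by (a multiple of) the $k$th area measure $S_k(K,\cdot)$.

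For part (c) this is immediate: if $S_k(K,\cdot)\ll\mathcal H^{d-1}$ with bounded density, then the push-forward of $|\psi_k|(K,\cdot)$ is also absolutely continuous with bounded density, and $\int_{S^{d-1}}\|u|E^\perp\|^{k-d+2}\,\mathcal H^{d-1}(du)<\infty$ because near the great subsphere $S^{d-1}\cap E$ the coarea formula turns this into $\int_0^{\cdot} t^{k-d+2}\,t^{d-2-k}\,dt=\int_0^{\cdot} dt<\infty$ (the exponent $k-d+2+(d-2-k)=0$ is exactly the borderline that is still integrable, which explains why $k=d-2$ works unconditionally and why $k=d-1,d-2$ are singled out — for $k=d-1$ the function $g$ is bounded outright). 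For part (b), I would integrate the potential divergence over $E\in G(d,k)$: by Fubini and the invariance of $\nu^d_k$, $\int_{G(d,k)}\int_{S^{d-1}}\|u|E^\perp\|^{k-d+2}\,\sigma(du)\,\nu^d_k(dE)=\sigma(S^{d-1})\cdot\int_{G(d,k)}\|u|E^\perp\|^{k-d+2}\,\nu^d_k(dE)$ for any fixed $u$, and this inner integral is a finite constant depending only on $d,k$ (again by a coarea computation on the Grassmannian, with the same borderline-integrable exponent). Hence the ``bad'' integral is finite for $\nu^d_k$-almost every $E$, with exceptional set depending on $K$ through $\sigma=$ push-forward of $|\psi_k|(K,\cdot)$. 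Part (a) is the same statement with the roles of $K$ and $E$ interchanged: fixing $E$ and averaging over $\rho\in O(d)$ replaces $S_k(K,\cdot)$ by $S_k(\rho K,\cdot)=\rho_* S_k(K,\cdot)$, and $\int_{O(d)}\int\|u|E^\perp\|^{k-d+2}\,(\rho_*\sigma)(du)\,\nu(d\rho)$ is finite by the same invariance-plus-coarea argument, so \eqref{th2} holds for $\nu$-a.e.\ $\rho$.

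For part (d), $K$ is a polytope and $\psi_k(K,\cdot)$ is given explicitly by \eqref{secondflagmeasure} as a finite sum over $k$-faces $F$ of integrals over the spherical normal polytopes $n(K,F)$, which are $(d-1-k)$-dimensional. The condition $L(F)\cap E^\perp=\{0\}$ for every $k$-face $F$ says precisely that no normal direction $u\in N(K,F)$ can lie in $E$ (since $u\in E$ would force $u\in E\cap L(F)^\perp$, and one checks that the general-position hypothesis rules out $u|E^\perp=0$ on the relevant cones), so $\|u|E^\perp\|$ is bounded below by a positive constant on the support of $\psi_k(K,\cdot)$; therefore $g(E,\cdot)$ is bounded on that support and the integral trivially converges. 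The main obstacle is making the coarea/Fubini estimates in (a) and (b) clean — in particular verifying that the push-forward of $\psi_k(K,\cdot)$ under $(u,U)\mapsto u$ is indeed a multiple of $S_k(K,\cdot)$ (this is recorded in Section~2: $\psi_k(K,F(d,d-k))$ is proportional to $V_k(K)$, and more refinedly $\psi_k(K,\cdot)$ comes from $S^{(d-1-k)}_k(K,\cdot)$ whose $u$-marginal is a multiple of $S_k(K,\cdot)$), and confirming that the exponent $k-d+2$ combined with the $(d-2-k)$-dimensional ``codimension'' of the singular set $\{u\in E\}$ gives a finite integral. Once that single scalar integral $\int_{G(d,k)}\|u|E^\perp\|^{k-d+2}\,\nu^d_k(dE)<\infty$ is in hand, all four parts follow by routine Fubini arguments.
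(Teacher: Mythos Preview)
Your proposal is correct and follows essentially the same route as the paper: bound $|\varphi^{u^\perp}_{d-1-k;E^\perp\cap u^\perp}|$ by a constant, reduce integrability of $g$ to that of $\|u|E^\perp\|^{k-d+2}$ against the $u$-marginal of $\psi_k(K,\cdot)$ (a constant multiple of $S_k(K,\cdot)$), and then treat the four cases by (a) averaging over $O(d)$ and computing $\int_{S^{d-1}}\|v|E^\perp\|^{k-d+2}\,\mathcal H^{d-1}(dv)<\infty$ via spherical coordinates, (b) Fubini over $E$, (c) bounded density against $\mathcal H^{d-1}$, and (d) compactness giving a uniform lower bound on $\|u|E^\perp\|$ over the support. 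The paper organizes the argument slightly differently---it introduces an explicit $\varepsilon$-truncation $v_k^\varepsilon(K,E)$ to make the passage to the limit rigorous, and it derives (b) from (a) via the covariance $v_k(K,\rho E)=v_k(\rho^{-1}K,E)$ rather than integrating over $E$ directly---but these are cosmetic differences, and your integrability heuristics (the exponent $k-d+2$ against the codimension $d-1-k$ of $S^{d-1}\cap E$ in $S^{d-1}$) match the paper's computation exactly.
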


\begin{proof} The cases $k=d-1$ and $k=d-2$ are covered by the argument given before the theorem. Hence, we now assume $k< d-2$.
 
For $\ee >0$, we define an $\ee$-approximation $v_k^{\ee}(K,E)$ of $v_k(K,E)$ by
\begin{align*}
v_k^{\ee}(K,E) &:=\frac 1{\omega_{d-k}}\int_{\nor K}{\bf 1}\{\| u | E^\perp \|\ge\ee\}\| u | E^\perp \|^{k-d}\\
&\qquad\times\sum_{|I|=k}\bK_{I^c}(K;x,u)\langle A_I(K;x,u),E\rangle^2\, \cH^{k-1}(d(x,u)).
\end{align*}
Then, monotone convergence implies that
$$
v_k^{\ee}(K,E)\nearrow v_k(K,E) , \quad {\rm as}\ \ee\to 0 .
$$
Hence we have to show that
\begin{align}\label{limit}
\int_{F(d,d-k)} &{\bf 1}\{\| u | E^\perp \|\ge\ee\} g(E,u,U)\,\psi_k(K,d(u,U)) \\
&\to \int_{F(d,d-k)}   g(E,u,U)\,\psi_k(K,d(u,U)) \quad {\rm as\ } \ee\to 0\nonumber
\end{align}
in each of the cases (a)--(d), with $g(E,u,U)$ given by \eqref{gdef}. Obviously, the function $|\varphi^{d-1}_{k;E'}|$ can be bounded by a constant $c>0$.

\smallskip\noindent
(a) In order to apply the dominated convergence theorem in \eqref{limit} we show that
\begin{align*}
\int_{F(d,d-k)}& \| u | E^\perp \|^{k-d+2}\,\psi_k(\rho K,d(u,U))\\
&= \int_{S^{d-1}} \| u | E^\perp \|^{k-d+2}\,S_k(\rho K,du) < \infty ,
\end{align*}
for $\nu$-almost all $\rho\in O(d)$. This will follow from
$$
\int_{O(d)}\int_{S^{d-1}} \| u | E^\perp \|^{k-d+2}\,S_k(\rho K,du)\, \nu(d\rho)< \infty.
$$
To see the latter, note that
\begin{align*}
\int_{O(d)}&\int_{S^{d-1}} \| u | E^\perp \|^{k-d+2}\,S_k(\rho K,du)\, \nu(d\rho)\\
&= \int_{O(d)}\int_{S^{d-1}} \| \rho u | E^\perp \|^{k-d+2}\,S_k(K,du)\, \nu(d\rho)\\
&= \frac{\omega_{d-k}}{\binom{d-1}{k}}V_k(K) \int_{S^{d-1}} \| v| E^\perp \|^{k-d+2}\,{\mathcal H}^{d-1}(dv)\\
&= \frac{\omega_{d-k}}{\binom{d-1}{k}}V_k(K) \int_{S^{k-1}(E)} \int_{S^{d-1-k}(E^\perp)}\int_0^{\pi/2}\| (\cos \theta \ u_1,\sin \theta \  u_2)| E^\perp \|^{k-d+2}\cr
&\quad\quad\times(\cos \theta)^{k-1}(\sin \theta )^{d-1-k}\,d\theta\,{\mathcal H}^{d-1-k}(du_2)\,{\mathcal H}^{k-1}(du_1)\\
&= \frac{\omega_{d-k}}{\binom{d-1}{k}}V_k(K)\omega_{k}\omega_{d-k}\int_0^{\pi/2} (\sin \theta )^{k-d+2}(\cos \theta)^{k-1}(\sin \theta )^{d-1-k}\,d\theta\\ 
&= \frac{\omega_{d-k}}{\binom{d-1}{k}}V_k(K)\omega_{k}\omega_{d-k}\int_0^{\pi/2} (\cos \theta)^{k-1}\sin \theta\, d\theta  \\
&< \infty .
\end{align*}

\smallskip\noindent
(b) From (a) we obtain, for $\nu$-almost all $\rho\in O(d)$,
\begin{align*}
v_k(K,\rho E) &= v_k(\rho^{-1}K,E)\\
&= \int_{F(d,d-k)} \| u | E^\perp \|^{k-d+2}\varphi^{d-1}_{k,E\cap u^\perp}(U)\,\psi_k(\rho^{-1} K,d(u,U))\\
&= \int_{F(d,d-k)} \| \rho^{-1}u | E^\perp \|^{k-d+2}\varphi^{d-1}_{k,E\cap (\rho^{-1}u)^\perp}(\rho^{-1}U)\,\psi_k(K,d(u,U))\\
&= \int_{F(d,d-k)} \| u | (\rho E)^\perp \|^{k-d+2}\varphi^{d-1}_{k,(\rho E)\cap u^\perp}(U)\,\psi_k(K,d(u,U)) ,
\end{align*}
where we used the covariance properties of $\psi_k$ and $\varphi^{d-1}_{k,\cdot}(\cdot)$. Replacing $\rho E$ by $E'$, we obtain the assertion for $\nu^d_k$-almost all $E'\in G(d,k)$.

\smallskip\noindent
(c) follows from the argument used in the proof of (a).

\smallskip\noindent
(d) For a polytope $K$ we have
$$
S_k(K,\cdot) = \binom{d-1}k^{-1} \sum_{F\in{\mathcal F}_k(K)} V_k(F){\mathcal H}^{d-1-k}(n(K,F)\cap \cdot) .
$$
Hence the support of $S_k(K,\cdot)$ is contained in 
$$\bigcup_{F\in{\mathcal F}_k(K)} n(K,F).
$$
Since $L(F)\cap E^\perp = \{ 0\}$ for all $F\in{\mathcal F}_k(K)$, we get $L(F)^\perp\cap E = \{ 0\}$ and therefore $n(K,F)\cap E=\emptyset $. A compactness argument then shows that there is some $\ee > 0$ such that $\| u | E^\perp\| \ge \ee$ for all $u$ in the support of $S_k(K,\cdot)$.
\end{proof}

\section{Determination of a concrete solution}

The proof of Theorem \ref{TH2} given in the previous section shows that a continuous function $g$ exists which yields the integral representation \eqref{th2}. In the following, we give an explicit expression for this function. This will follow from the proof of Lemma~\ref{L_Grassm} if we have an explicit solution ${\alpha} :=(\alpha_0 ,\dots ,\alpha_{k\wedge (d-k)})$ of the linear system
\begin{equation}\label{LSE}
{\alpha}\cdot D = (1,0,\dots ,0)
\end{equation}
with coefficient matrix $D=(d^{d,k}_{m,i})_{m,i=0}^{k\wedge (d-k)}$. The following theorem gives such a solution.

\begin{theorem}\label{explsol}
An explicit solution $\alpha$ of \eqref{LSE} is given by
$$
\alpha_m = (-1)^{m}  \frac{\binom{d}{k}\binom{d+1}{k}}{(k+1)\binom{d+1}{m}}, \ m=0,\dots , k\wedge (d-k) .
$$
\end{theorem}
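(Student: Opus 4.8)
The plan is to verify that the candidate vector $\alpha$ solves the linear system \eqref{LSE} by computing, for each fixed column index $i \in \{0,\dots,k\wedge(d-k)\}$, the sum $\sum_{m=0}^{k\wedge(d-k)} \alpha_m d^{d,k}_{m,i}$ and checking that it equals $1$ if $i=0$ and $0$ otherwise. For this I need an explicit expression (or at least a usable closed form) for the entries $d^{d,k}_{m,i}$ of the matrix $D$ from \cite[Lemma~4]{HRW}. I would recall that formula, substitute $\alpha_m = (-1)^m c_{d,k}/\binom{d+1}{m}$ where $c_{d,k} = \binom{d}{k}\binom{d+1}{k}/(k+1)$ is independent of $m$, and pull the constant out, reducing the claim to a purely combinatorial identity of the form
$$
\sum_{m} (-1)^m \frac{d^{d,k}_{m,i}}{\binom{d+1}{m}} = c_{d,k}^{-1}\,\delta_{i,0}.
$$
Since the excerpt announces ``a combinatorial identity'' proved in the Appendix and ``used in the solution of this integral equation,'' I expect this reduction to land exactly on that Appendix identity, so I would invoke it at this point.

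Concretely, the steps in order are: (1) write down $d^{d,k}_{m,i}$ explicitly (it is, up to normalizing constants, a product of ratios of Gamma functions / binomial coefficients in $m$, $i$, $k$, $d$, arising from integrating $\langle A,U\rangle_p^2\langle U,B\rangle^2$ over $G(d,k)$); (2) separate the part depending only on $i$ (which can be factored out of the sum over $m$) from the part depending on $m$; (3) after cancelling the $\binom{d+1}{m}$ in the denominator of $\alpha_m$ against whatever $m$-dependence sits in $d^{d,k}_{m,i}$, recognize the alternating sum over $m$ as a finite hypergeometric sum — most likely a Vandermonde-type or Saalschütz-type evaluation — which vanishes for $i \ge 1$ and collapses to a single explicit term for $i = 0$; (4) check that the surviving $i=0$ term, together with the prefactor $c_{d,k}$, produces exactly $1$, which pins down the normalizing constant $\binom{d}{k}\binom{d+1}{k}/(k+1)$. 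An alternative, cleaner route avoiding the explicit form of $D$: use the defining property of $D$, namely \eqref{L4}, together with a known spectral decomposition of the averaging operator $B \mapsto \int_{G(d,k)} \langle A,U\rangle^2 \langle U,B\rangle^2 \,\nu^d_k(dU)$ in terms of the basis functions $\langle \cdot, \cdot\rangle_p^2$; then $D^{-1}$ has a description via the inverse change of basis, and the first row of $D^{-1}$ is read off directly. I would present whichever is shorter, but the first (direct) route is the one I expect the paper to take, given the Appendix structure.

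The main obstacle will be step (3): controlling the alternating binomial sum. Getting the indices and the Gamma-function arguments exactly right in $d^{d,k}_{m,i}$ is delicate — the quantities $\langle A,B\rangle_p$ involve projections onto exterior-product subspaces, and the constants $d^{d,k}_{p,q}$ from \cite[Lemma~4]{HRW} are only described ``roughly'' in the excerpt, so I would have to be careful to use the precise normalization there. Once the sum is in standard hypergeometric form, the vanishing for $i\ge 1$ should follow from a classical identity (the appearance of $\binom{d+1}{m}$ and the factor $(k+1) = (d+1) - (d-k)$ in the denominator strongly suggests a telescoping/Vandermonde mechanism tied to the length-$(k\wedge(d-k))$ range of summation), but matching the overall constant to $1$ requires tracking every factor through. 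I would isolate this bookkeeping into the combinatorial lemma in the Appendix and quote it here, so that the proof of Theorem~\ref{explsol} itself consists only of: state $d^{d,k}_{m,i}$, substitute $\alpha$, factor, and apply the Appendix identity — concluding that $\alpha \cdot D = (1,0,\dots,0)$, hence $\alpha$ solves \eqref{LSE} and, by uniqueness (regularity of $D$, already noted), is \emph{the} solution.
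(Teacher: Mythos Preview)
Your overall strategy matches the paper's: substitute the candidate $\alpha$ into $\sum_m \alpha_m d^{d,k}_{m,i}$, reduce to a combinatorial identity, and invoke the Appendix result (Theorem~\ref{identity}) to conclude $\alpha\cdot D=(1,0,\dots,0)$. That part is right.

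Where you are too optimistic is step~(1). The entries $d^{d,k}_{m,i}$ are \emph{not} given in \cite{HRW} as a single product of Gamma-function or binomial ratios in $m,i,k,d$. What \cite[p.~649]{HRW} provides is
\[
d^{d,k}_{m,i}=\sum_{j=0}^{k\wedge(d-k)} c^d_{k,j}\, F(d,k,j,m,i),
\]
where $F$ is an explicit fourfold-binomial sum (over an index $l$) and the $c^d_{k,j}$ are integrals $\int_{G(d,k)}\langle E,V\rangle^2\langle F,V\rangle^2\,\nu^d_k(dV)$ depending on $\dim(E\cap F)=k-j$, left unevaluated there. So before any combinatorial manipulation can start, one must compute $c^d_{k,j}$ in closed form. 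The paper does this via two recursive lemmas on the Grassmannian (Lemmas~\ref{L1c} and~\ref{L2c}, proved using the disintegration formula \eqref{HSS} and spherical integrals from Lemma~\ref{L_int_sphere}), obtaining in Corollary~\ref{cor1}
\[
c^d_{k,j}=\frac{(k+1)(k+2)}{(j+1)(j+2)}\binom{d}{k}^{-1}\binom{d+2}{k}^{-1}.
\]
Only after inserting this does $\sum_m\alpha_m d^{d,k}_{m,i}$ become the triple sum over $m,j,l$ that Theorem~\ref{identity} evaluates to $\binom{k-i}{k}=\delta_{i,0}$. Your step~(2), ``separate the part depending only on $i$,'' does not reflect the actual structure: the weight $c^d_{k,j}$ depends on $j$, not $i$, and $i$ sits inside the innermost binomial sum $F$; nothing factors cleanly, which is why the Appendix identity is genuinely nontrivial rather than a Vandermonde/Saalsch\"utz one-liner. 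In short: right plan, but you are missing the substantial intermediate task of evaluating $c^d_{k,j}$, and your description of $d^{d,k}_{m,i}$ is structurally off.
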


In order to prepare the proof of Theorem \ref{explsol}, we recall from \cite[p. 649]{HRW} the equation
\begin{align}\label{abbr}
d^{d,k}_{m,i}
&= \sum_{j=0}^{k\wedge (d-k)} c^d_{k,j} F(d,k,j,m,i)
\end{align}
where 
$$
 F(d,k,j,m,i):=
\sum_{l=0}^k \binom{k-i}{l}\binom{i}{k-m-l}\binom{i}{k-j-l}\binom{d-k-i}{j+l+m-k}
$$
and where the constants $c^d_{k,j}$, for $0\le j\le k\wedge (d-k)$, are defined as follows. Let $e_1,\dots ,e_d$ be the canonical basis of $\R^d$ and let $E\in G(d,k)$ be the subspace spanned by $e_1,\dots , e_k$ and $F\in G(d,k)$ the subspace spanned by $e_1,\dots ,e_{k-j},$ $e_{k+1},\dots ,e_{k+j}$. Then 
$$
c^d_{k,j} = \int_{G(d,k)} \langle E,V\rangle^2\langle F,V\rangle ^2 \nu^d_k(dV)
$$
depends only on the dimensions $d,k,j$ (and not on the particular choice of the bases), as follows from \cite[Lemma 3]{HRW} and the explanation on p. 647 of \cite{HRW}.

In the next two lemmas it is convenient to use calculations in the Grassmann algebra $\bigwedge \R^d$ of $\R^d$. As in \cite{HRW}, we introduce scalar product (and a norm)  
on $\bigwedge_k\R^d$,  
which are induced by the scalar product of $\R^d$, and we also use the same notation.

\begin{lemma}   \label{L1c}
For any nonnegative integers $i,k,d$ such that $0\leq i<k$ and $2k\leq d$ we have
$$c^d_{k,i}=I_k^d\,c^{d-1}_{k-1,i},$$
where
$$I^d_k=\frac{k(k+2)}{d(d+2)}.$$
\end{lemma}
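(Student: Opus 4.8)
\textbf{Plan for the proof of Lemma \ref{L1c}.}
The goal is a reduction formula relating the constant $c^d_{k,i}$ in dimension $d$ to the constant $c^{d-1}_{k-1,i}$ in dimension $d-1$. The most natural route is to exploit the integral-geometric definition of $c^d_{k,i}$ as $\int_{G(d,k)}\langle E,V\rangle^2\langle F,V\rangle^2\,\nu^d_k(dV)$, where $E,F$ are the two distinguished coordinate subspaces spanned by canonical basis vectors, with $\dim(E\cap F)=k-i$. Since both $E$ and $F$ contain the common subspace $E\cap F=\lin\{e_1,\dots,e_{k-i}\}$ of dimension $k-i\ge 1$ (using $i<k$), I want to ``peel off'' one common coordinate direction, say $e_1$, and integrate it out. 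Concretely, I will pick a fixed unit vector $e\in E\cap F$ and use a fibration of $G(d,k)$ over the ``flag-type'' space of pairs $(V\cap e^\perp)$ or, more cleanly, condition on the position of $e$ relative to $V$ and on the trace $V\cap e^\perp\in G(e^\perp,k-1)\cong G(d-1,k-1)$.

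The key computational step is to express $\langle E,V\rangle^2$ in terms of data inside $e^\perp$. Writing an orthonormal basis $v_1,\dots,v_k$ of $V$ and an orthonormal basis of $E$ containing $e$, a determinant expansion along the row corresponding to $e$ should give a factorization of the form
\begin{align*}
\langle E,V\rangle^2 &= \|e\,|\,V\|^2\,\langle E\cap e^\perp,\, \pi_{e^\perp}(V)\rangle^2,
\end{align*}
where $\pi_{e^\perp}$ denotes orthogonal projection onto $e^\perp$ — this is exactly the same bookkeeping already carried out in \eqref{exchange} above (applied with the roles of subspace/orthocomplement adjusted), so I can invoke that style of argument rather than redo it. The same identity holds with $F$ in place of $E$. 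Substituting both into the integral gives
\begin{align*}
c^d_{k,i} &= \int_{G(d,k)} \|e\,|\,V\|^4\,\langle E\cap e^\perp, \pi_{e^\perp}(V)\rangle^2\,\langle F\cap e^\perp,\pi_{e^\perp}(V)\rangle^2\,\nu^d_k(dV).
\end{align*}
Now I disintegrate $\nu^d_k$ with respect to the map $V\mapsto \pi_{e^\perp}(V)\in G(e^\perp,k-1)$ (valid for $\nu^d_k$-a.e.\ $V$, where $e\notin V^\perp$, i.e.\ $\|e|V\|>0$). By the $O(d)$-invariance, the image measure on $G(e^\perp,k-1)$ is the invariant probability $\nu^{e^\perp}_{k-1}$, which I identify with $\nu^{d-1}_{k-1}$; and since $E\cap e^\perp$, $F\cap e^\perp$ are precisely the coordinate subspaces realizing the constant $c^{d-1}_{k-1,i}$ (they still have intersection of dimension $(k-i)-1 = (k-1)-i$), the angle-squared factors integrate — conditionally — to $c^{d-1}_{k-1,i}$ provided the conditional law of $\pi_{e^\perp}(V)$ given nothing further is indeed $\nu^{d-1}_{k-1}$ and provided the factor $\|e|V\|^4$ is independent of the fibre variable or can be pulled out. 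This is the delicate point: $\|e|V\|$ is a function of $V$ that is \emph{not} constant on fibres of $\pi_{e^\perp}$, so the disintegration must be organized so that, conditionally on $\pi_{e^\perp}(V)$, the quantity $\|e|V\|$ has a distribution independent of $\pi_{e^\perp}(V)$. By symmetry this should be true, and then
\begin{align*}
c^d_{k,i} &= \Bigl(\mathbb{E}\,\|e|V\|^4\Bigr)\cdot c^{d-1}_{k-1,i},
\end{align*}
so that $I^d_k = \mathbb{E}\,\|e|V\|^4$ for a $\nu^d_k$-random $V\in G(d,k)$ and a fixed unit vector $e$.

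The remaining task is to evaluate $\mathbb{E}\,\|e|V\|^4$ and check it equals $\frac{k(k+2)}{d(d+2)}$. Here $\|e|V\|^2 = \langle e, w\rangle$-type quantity: more precisely, if $P_V$ is the orthogonal projection onto a uniform random $V\in G(d,k)$, then $\|e|V\|^2 = \langle e, P_V e\rangle$, and $t:=\|e|V\|^2$ is $\mathrm{Beta}(k/2,(d-k)/2)$-distributed — a standard fact (it is the squared length of the projection of a fixed unit vector onto a uniform $k$-subspace, equivalently $\sum_{j=1}^k g_j^2 / \sum_{j=1}^d g_j^2$ for i.i.d.\ standard Gaussians $g_j$). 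Then $\mathbb{E}\,t^2$ for $\mathrm{Beta}(a,b)$ with $a=k/2$, $b=(d-k)/2$ is $\frac{a(a+1)}{(a+b)(a+b+1)} = \frac{(k/2)(k/2+1)}{(d/2)(d/2+1)} = \frac{k(k+2)}{d(d+2)}$, which is exactly $I^d_k$. I expect the \textbf{main obstacle} to be the careful justification of the disintegration in the second paragraph — specifically, verifying that conditionally on the projected subspace $\pi_{e^\perp}(V)$ the ``tilt'' $\|e|V\|$ is distributed independently of it, so that the $\|e|V\|^4$ factor genuinely separates from the $c^{d-1}_{k-1,i}$-integral. One clean way to make this rigorous, avoiding an ad hoc disintegration argument, is to realize $V = \lin\{X_1,\dots,X_k\}$ with $X_j$ i.i.d.\ standard Gaussian vectors in $\R^d$, decompose each $X_j = \langle X_j,e\rangle e + X_j'$ with $X_j'\in e^\perp$ Gaussian, and observe that the Gaussian coordinates along $e$ and those in $e^\perp$ are independent; then $\|e|V\|$ depends only on the former batch \emph{after} conditioning on the latter's linear span, while $\pi_{e^\perp}(V) = \lin\{X_1',\dots,X_k'\}$ depends only on the latter — giving the desired conditional independence and simultaneously the $\mathrm{Beta}$ law. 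The constants $I^d_k$ for the two factors then match by the formula just computed, completing the proof.
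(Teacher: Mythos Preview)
Your strategy is essentially the paper's: pick $e\in E\cap F$ (using $i<k$), factorize $\langle E,V\rangle^2$ via $e$, disintegrate $\nu^d_k$ accordingly, and separate off $c^{d-1}_{k-1,i}$ from a scalar factor. The Beta-moment evaluation of $I^d_k$ is a pleasant variant of the paper's direct spherical-integral computation via Lemma~\ref{L_int_sphere}; both give $k(k+2)/(d(d+2))$.

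However, there is a concrete error in the subspace you condition on. The factorization you want is
\[
\langle E,V\rangle^2=\|e|V\|^2\,\langle E\cap e^\perp,\,V\cap e^\perp\rangle^2,
\]
with $V\cap e^\perp\in G(e^\perp,k-1)$ --- exactly as you write in your first paragraph --- and \emph{not} $\pi_{e^\perp}(V)$, which is generically $k$-dimensional. (Take $v_1,\dots,v_{k-1}$ an orthonormal basis of $V\cap e^\perp$ and $v_k$ the unit vector in $V$ in the direction of $e|V$; the row for $e$ in the $E$-to-$V$ projection matrix is then $(0,\dots,0,\|e|V\|)$, and the complementary minor is $\langle E\cap e^\perp,V\cap e^\perp\rangle$.) With $\pi_{e^\perp}(V)$ the inner integral compares subspaces of different dimensions and is not $c^{d-1}_{k-1,i}$.

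Consequently, your Gaussian argument --- tailored to $\pi_{e^\perp}(V)=\lin\{X_1',\dots,X_k'\}$ --- does not prove what you need; and in fact $\|e|V\|$ is \emph{not} a function of the $\langle X_j,e\rangle$ alone, even after conditioning on that span. What must be shown is that the \emph{trace} $V\cap e^\perp$ is $\nu^{e^\perp}_{k-1}$-distributed and that $\|e|V\|$ is conditionally independent of it. The paper achieves this by invoking the explicit disintegration \eqref{HSS},
\[
\int_{G(d,k)}h(V)\,\nu^d_k(dV)=C_{d,k}\int_{G(e^\perp,k-1)}\int_{S^{d-k}(W^\perp)}h(W+\langle v\rangle)\,|\langle e,v\rangle|^{k-1}\,\cH^{d-k}(dv)\,\nu^{e^\perp}_{k-1}(dW),
\]
in which $W=V\cap e^\perp$ and $\|e|V\|=|\langle e,v\rangle|$; the separation is then immediate, and your Beta$(k/2,(d-k)/2)$ claim for $\|e|V\|^2$ follows from this formula as well.
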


\begin{proof}
Let $E,F$ be defined as above and let $V$ be another $k$-subspace. If $V$ is not orthogonal to $e_1$, then $V$ has an orthonormal basis $v_1,\ldots,v_k$ such that $v_1,\ldots,v_{k-1}$ are orthogonal to $e:=e_1$ and $v_k$ is the normalized orthogonal projection of $e_1$ to $V$. Then we have
\begin{align*}
\langle E,V\rangle^2&=\|e_{k+1}\wedge\dots\wedge e_d\wedge v_1\wedge\dots\wedge v_k\|^2\\
&=|\langle v_k,e\rangle|^2 \|e_{k+1}\wedge\dots\wedge e_d\wedge v_1\wedge\dots\wedge v_{k-1}\|^2\\
&=\| e | V\|^2 \langle E\cap e^\perp,V\cap e^\perp\rangle^2
\end{align*}
and a similar relation for $F$.
Since
$$c^d_{k,i}=\int_{G(d,k)} \langle E,V\rangle^2\langle F,V\rangle^2\, \nu_k^d(dV),$$
the above identity yields
$$c^d_{k,i}=\int_{G(d,k)}  \| e | V\|^4\langle E\cap e^\perp,V\cap e^\perp\rangle^2\langle F\cap e^\perp,V\cap e^\perp\rangle^2\, \nu_k^d(dV).$$
We now apply a special case of Lemma~4.1 in \cite{HSS08} (see also \cite[Theorem 7.2.6]{SW}) which yields
\begin{align}  \label{HSS}
\int_{G(d,k)}h(V)\,\nu_k^d(dV) &= 
\frac{\Gamma\left(\frac d2\right)}{2\pi^{\frac{d-k}2}\Gamma\left(\frac k2\right)}
\int_{G(e^\perp,k-1)}\\
&\times\int_{S^{d-k}(W^\perp)}h(W+ \langle v\rangle)\, |\langle e,v\rangle|^{k-1}
\cH^{d-k}(dv)\,\nu^{e^\perp}_{k-1}(dW),  \nonumber
\end{align}
whenever $h$ is an integrable function on $G(d,k)$. For our choice of $h$ we obtain
\begin{align*}
c^d_{k,i}&=\frac{\Gamma\left(\frac d2\right)}{2\pi^{\frac{d-k}2}\Gamma\left(\frac k2\right)}
\int_{G(e^\perp,k-1)}\int_{S^{d-k}(W^\perp)}|\langle v,e\rangle|^4 |\langle v,e\rangle|^{k-1}\, \cH^{d-k}(dv)\\
 &\hspace {1cm}\times\langle E\cap e^\perp,W\rangle^2\langle F\cap e^\perp,W\rangle^2\, \nu^{e^\perp}_{k-1}(dW)\\
&= \frac{\Gamma\left(\frac d2\right)}{2\pi^{\frac{d-k}2}\Gamma\left(\frac k2\right)}
\int_{S^{d-k}(W^\perp)}|\langle v,e\rangle|^{k+3}\, \cH^{d-k}(dv)\, c^{d-1}_{k-1,i}\\
&= \frac{\Gamma\left(\frac d2\right)}{2\pi^{\frac{d-k}2}\Gamma\left(\frac k2\right)}
\frac{2\pi^{\frac{d-k}2}\Gamma\left(\frac{k+4}2\right)}{\Gamma\left(\frac{d+4}2\right)} c^{d-1}_{k-1,i}\\
&=\frac{k(k+2)}{d(d+2)}c^{d-1}_{k-1,i},
\end{align*}
yielding the assertion. Here, we have used the first equality
from Lemma~\ref{L_int_sphere} in the last but one step. 
\end{proof}

\begin{lemma}  \label{L2c}
For any positive integers $k,d$ such that $2k\leq d$ we have
$$c^d_{k,k}=J_k^d\,c^{d-1}_{k-1,k-1},$$
where
$$J^d_k=\frac{k^2}{d(d+2)}.$$
\end{lemma}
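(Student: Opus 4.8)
The plan is to mimic the proof of Lemma \ref{L1c}, carrying through the same decomposition of the relevant $k$-subspaces along the line $\langle e\rangle$ with $e=e_1$, but now keeping track of the fact that for $c^d_{k,k}$ the second reference subspace $F$ (spanned by $e_1,\ldots,e_{k-k}=\emptyset$ together with $e_{k+1},\ldots,e_{2k}$, i.e. by $e_{k+1},\ldots,e_{2k}$) does \emph{not} contain $e=e_1$. This is the crucial difference from Lemma \ref{L1c}, where $i<k$ forced $e_1\in F$ and produced the factor $\|e|V\|^2$ from \emph{both} $E$ and $F$. Here the factorization
$$\langle E,V\rangle^2=\|e|V\|^2\,\langle E\cap e^\perp,V\cap e^\perp\rangle^2$$
still holds for $E$ (since $e_1\in E$), but $F\subset e^\perp$, so $F\cap e^\perp=F$ and one only gets $\langle F,V\rangle^2=\langle F,V\cap e^\perp\rangle^2\cdot(\text{something})$; more precisely, writing $v_1,\ldots,v_{k-1}\in e^\perp$ and $v_k$ the normalized projection of $e$ into $V$, one has $V\cap e^\perp=\lin\{v_1,\ldots,v_{k-1}\}$ and $v_k = \|e|V\|^{-1}(e - (e|V^\perp))$, so the component of $v_k$ orthogonal to $e$ is $-\|e|V\|^{-1}(e|V^\perp)$ which lies in $e^\perp$ but is \emph{not} in $V\cap e^\perp$. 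The cleanest route is: expand $\langle F,V\rangle^2 = \|f_1\wedge\cdots\wedge f_{k}\wedge v_1\wedge\cdots\wedge v_k\|^2$ where $f_1,\ldots,f_{d-k}$ spans $F^\perp$; since $e\in F^\perp$ we can take $f_1=e$, and then the wedge factors as $\|e\wedge v_k\|\cdot\|f_2\wedge\cdots\wedge f_{d-k}\wedge v_1\wedge\cdots\wedge v_{k-1}\|$ up to sign, giving $\langle F,V\rangle^2 = (1-\langle v_k,e\rangle^2)\,\langle F, V\cap e^\perp\rangle^2 = (1-\|e|V\|^2)\,\langle F\cap e^\perp, V\cap e^\perp\rangle^2$ (using $F\subset e^\perp$).

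Thus I would write
$$c^d_{k,k}=\int_{G(d,k)}\|e|V\|^2\,(1-\|e|V\|^2)\,\langle E\cap e^\perp,V\cap e^\perp\rangle^2\,\langle F\cap e^\perp, V\cap e^\perp\rangle^2\,\nu^d_k(dV),$$
then apply the integral-geometric formula \eqref{HSS} exactly as in Lemma \ref{L1c}. With $V=W+\langle v\rangle$, $W\in G(e^\perp,k-1)$, $v\in S^{d-k}(W^\perp)$, one has $\|e|V\|^2=\langle e,v\rangle^2$, while $\langle E\cap e^\perp,V\cap e^\perp\rangle^2=\langle E\cap e^\perp,W\rangle^2$ and $\langle F\cap e^\perp,V\cap e^\perp\rangle^2=\langle F\cap e^\perp,W\rangle^2$ are independent of $v$; the $W$-integral of the product of these two factors is precisely $c^{d-1}_{k-1,k-1}$ (note $E\cap e^\perp$ and $F\cap e^\perp=F$ are, inside $e^\perp\cong\R^{d-1}$, the canonical reference pair for index $k-1=(k-1)\wedge(d-1-(k-1))$). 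What remains is the spherical integral
$$\frac{\Gamma(d/2)}{2\pi^{(d-k)/2}\Gamma(k/2)}\int_{S^{d-k}(W^\perp)}\langle v,e\rangle^{k-1}\big(\langle v,e\rangle^2-\langle v,e\rangle^4\big)\,\cH^{d-k}(dv),$$
i.e. the difference of a $(k+1)$st and a $(k+3)$rd power moment of $|\langle v,e\rangle|$ over the $(d-k)$-sphere in $W^\perp\cong\R^{d-k+1}$.

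Both moments are evaluated by Lemma \ref{L_int_sphere} (the auxiliary lemma cited in the proof of Lemma \ref{L1c}): the $(k+1)$st moment contributes the prefactor $\frac{\Gamma(d/2)}{2\pi^{(d-k)/2}\Gamma(k/2)}\cdot\frac{2\pi^{(d-k)/2}\Gamma((k+2)/2)}{\Gamma((d+2)/2)}=\frac{k}{d}$, and the $(k+3)$rd moment contributes $\frac{k(k+2)}{d(d+2)}=I^d_k$. Hence $J^d_k=\frac{k}{d}-\frac{k(k+2)}{d(d+2)}=\frac{k(d+2)-k(k+2)}{d(d+2)}=\frac{k(d-k)}{d(d+2)}$; with $2k\le d$ one may alternatively note that $c^d_{k,k}$ only gets compared with itself down the recursion, and the stated closed form $J^d_k=k^2/(d(d+2))$ must then come from $d=2k$, i.e. the intended normalization is that the lemma is applied along the recursion until $2k\le d$ becomes $2k=d$; I would double-check the exponent bookkeeping here, since whether the surviving factor is $\|e|V\|^2(1-\|e|V\|^2)$ or $\|e|V\|^4$ times a $(1-\|e|V\|^2)^{?}$ correction depends on how many of the $k$ basis vectors of $F$ lie outside $e^\perp$ — for $c^d_{k,k}$ none do, so exactly one factor $(1-\|e|V\|^2)$ and one factor $\|e|V\|^2$ appear, giving $J^d_k = I^d_k$ coming from $\langle v,e\rangle^{k-1}\cdot\langle v,e\rangle^2$ after the cancellation described, i.e. $J^d_k = \frac{k}{d}\cdot\frac{?}{?}$. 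The \textbf{main obstacle} is precisely this exponent/normalization bookkeeping: getting the power $k-1$ of $|\langle v,e\rangle|$ from formula \eqref{HSS} right, correctly identifying which wedge-product factor survives as $c^{d-1}_{k-1,k-1}$ (as opposed to some $c^{d-1}_{k-1,k-2}$ or a different index), and then matching the resulting rational function of $d,k$ to the claimed $k^2/(d(d+2))$ — which I expect forces the observation that in the regime $2k\le d$ the pair $(E,F)$ for index $i=k$ descends, under intersection with $e^\perp$, to the pair for index $k-1$ in dimension $d-1$ with the \emph{same} extremal index $k-1=(k-1)\wedge(d-1-k+1)$ only when $2(k-1)\le d-1$, i.e. $2k\le d+1$, which holds; the closed form $k^2/(d(d+2))$ should then drop out of $\frac{\Gamma(d/2)}{2\pi^{(d-k)/2}\Gamma(k/2)}\cdot\frac{2\pi^{(d-k)/2}\Gamma((k+2)/2)}{\Gamma((d+2)/2)}$ after the single correction factor, and I would verify this against the known base cases (e.g. $k=1$, where $c^d_{1,1}$ is an elementary sphere integral) before trusting the general formula.
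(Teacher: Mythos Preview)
Your factorization of $\langle F,V\rangle^2$ is wrong, and this is not a bookkeeping issue but a structural one. When $i=k$, the subspace $F=\lin\{e_{k+1},\dots,e_{2k}\}$ lies entirely in $e^\perp$, so $F\cap e^\perp=F$ is $k$-dimensional while $V\cap e^\perp=\lin\{v_1,\dots,v_{k-1}\}$ is only $(k-1)$-dimensional. The expression $\langle F\cap e^\perp,V\cap e^\perp\rangle^2$ that you write down is therefore not even defined as a projection determinant between equidimensional subspaces, and the identification of the $W$-integral with $c^{d-1}_{k-1,k-1}$ is meaningless. More concretely, after peeling off $e=e_1$ from the wedge you are left with $\|f_2\wedge\cdots\wedge f_{d-k}\wedge v_1\wedge\cdots\wedge v_{k-1}\wedge v_k'\|$ where $v_k'=v_k-\langle v_k,e\rangle e$; your claimed splitting into $\|e\wedge v_k\|\cdot\|f_2\wedge\cdots\wedge v_{k-1}\|$ would require $v_k'$ to be orthogonal to $F^\perp\cap e^\perp=\lin\{f_2,\dots,f_{d-k}\}$, which it is not in general. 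This is exactly why your computation produces $J_k^d=k(d-k)/(d(d+2))$, which disagrees with the stated $k^2/(d(d+2))$ except in the degenerate case $2k=d$ (so your proposed check against base cases would miss the error).

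The paper handles this differently. It does not try to strip a scalar factor from $\langle F,V\rangle^2$; instead it introduces a unit vector $u\in F\cap(V\cap e^\perp)^\perp$ (generically one-dimensional, and $u\perp e$ since $F\subset e^\perp$) and expands the relevant $(d-1)$-vector $w=e_1\wedge\cdots\wedge e_k\wedge e_{2k+1}\wedge\cdots\wedge e_d\wedge v_1\wedge\cdots\wedge v_{k-1}$ via $\|w\|^2=\sum_{j=k+1}^{2k}\|w\wedge e_j\|^2$. Each summand equals $\langle F\cap e_j^\perp,V\cap e^\perp\rangle^2$, now a genuine pairing of two $(k-1)$-dimensional subspaces of $e^\perp$ with trivial intersection, hence contributing $c^{d-1}_{k-1,k-1}$. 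The full identity reads $\langle F,V\rangle^2=|\langle u,v_k\rangle|^2\sum_j\langle F\cap e_j^\perp,V\cap e^\perp\rangle^2$, so after applying \eqref{HSS} the spherical integral is $\int_{S^{d-k}}|\langle e,v\rangle|^{k+1}|\langle u,v\rangle|^2\,\cH^{d-k}(dv)$ with two \emph{orthogonal} reference directions. This is evaluated by the \emph{second} identity of Lemma~\ref{L_int_sphere}, not the first, and together with the factor $k$ from the sum over $j$ yields $k\cdot\frac{\Gamma(d/2)\Gamma((k+2)/2)\Gamma(3/2)}{\sqrt{\pi}\,\Gamma(k/2)\Gamma((d+4)/2)}=k^2/(d(d+2))$.
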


\begin{proof}
Let $E,F$ be the subspaces as defined above. Let $V$ be another $k$-subspace which is not orthogonal to $e=e_1$, let $v_1,\ldots,v_k$ be the orthogonal basis of $V$ as in the proof of Lemma~\ref{L1c}, and let $u$ be a unit vector from $F\cap (V\cap e^\perp)^\perp$. We set
$$
w:=e_1\wedge\dots\wedge e_k\wedge e_{2k+1}\wedge\dots\wedge e_d\wedge v_1\wedge\dots\wedge v_{k-1}
$$
and choose a unit vector $a$ orthogonal to the vectors involved in the definition of $w$. Then
\begin{align*}
\sum_{j=k+1}^{2k}\|w\wedge e_j\|^2=\sum_{j=1}^{d}\|w\wedge e_j\|^2=\sum_{j=1}^{d}\langle e_j,a\rangle^2 \|w\wedge a\|^2
=\|w\wedge a\|^2=\|w\|^2.
\end{align*}
Since $\langle F\cap e_j^\perp,V\cap e^\perp\rangle^2=\|w\wedge e_j\|^2$, we obain
\begin{align*}
\langle F,V\rangle^2&=\|e_1\wedge\dots\wedge e_k\wedge e_{2k+1}\wedge\dots\wedge e_d\wedge v_1\wedge\dots\wedge v_k\|^2=|\langle u,v_k\rangle|^2 \|w\|^2\\
&=|\langle u,v_k\rangle|^2 \sum_{j=k+1}^{2k}\langle F\cap e_j^\perp,V\cap e^\perp\rangle^2.
\end{align*}
Hence,
$$c^d_{k,k}=\int_{G(d,k)} |\langle e,v_k\rangle|^2|\langle u,v_k\rangle|^2\sum_{j=k+1}^{2k}\langle E\cap e^\perp,V\cap e^\perp\rangle^2\langle F\cap e_j^\perp,V\cap e^\perp\rangle^2\, \nu_k^d(dV).$$
Using \eqref{HSS} again, we get
\begin{align*}
c^d_{k,k}&=\frac{\Gamma\left(\frac d2\right)}{2\pi^{\frac{d-k}2}\Gamma\left(\frac k2\right)}\int_{G(e^\perp,k-1)}\int_{S^{d-k}(W^\perp)}|\langle e,v\rangle|^2 |\langle u,v\rangle|^2 |\langle e,v\rangle|^{k-1}\, \cH^{d-k}(dv)\\
 &\hspace {1cm}\times\sum_{j=k+1}^{2k}\langle E\cap e^\perp,W\rangle^2\langle F\cap e_j^\perp,W^\perp\rangle^2\, \nu^{e^\perp}_{k-1}(dW)\\
&= \frac{\Gamma\left(\frac d2\right)}{2\pi^{\frac{d-k}2}\Gamma\left(\frac k2\right)}k\int_{S^{d-k}(W^\perp)}|\langle e,v\rangle|^{k+1} |\langle u,v\rangle|^2 \, \cH^{d-k}(dv)\,  c^{d-1}_{k-1,k-1}.
\end{align*}
The second equality from Lemma~\ref{L_int_sphere} yields
\begin{align*}
\int_{S^{d-k}}|\langle e,v\rangle|^{k+1} |\langle u,v\rangle|^2 \, \cH^{d-k}(dv)&=2\pi^{\frac{d-1-k}2}
\frac{\Gamma\left(\frac{k+2}2\right)\Gamma\left(\frac 32\right)}
{\Gamma\left(\frac{d+4}2\right)} \\ &=
\pi^{\frac{d-k}2}\frac{\Gamma\left(\frac{k+2}2\right)}{\Gamma\left(\frac{d+4}2\right)},
\end{align*}
hence we have
$$c^d_{k,k}=\frac{k^2}{d(d+2)}c^{d-1}_{k-1,k-1}.$$
\end{proof}

\begin{koro}\label{cor1}
For integers $0\le k\le d-1$ and $0\le i\le k\wedge (d-k)$, we have
\begin{equation}\label{expl.expr2}
c^d_{k,i}=\frac{(k+1)(k+2)}{(i+1)(i+2)}{\binom dk}^{-1} {\binom{d+2}k}^{-1}.
\end{equation}
\end{koro}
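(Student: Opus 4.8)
The plan is to verify, by induction on $k$, that the right-hand side of \eqref{expl.expr2} satisfies the two recursions furnished by Lemmas \ref{L1c} and \ref{L2c}. Abbreviate the claimed value by
$$\gamma^d_{k,i}:=\frac{(k+1)(k+2)}{(i+1)(i+2)}\binom{d}{k}^{-1}\binom{d+2}{k}^{-1},$$
so that the task is to prove $c^d_{k,i}=\gamma^d_{k,i}$ for all $0\le k\le d-1$ and $0\le i\le k\wedge(d-k)$. Since Lemmas \ref{L1c} and \ref{L2c} are available only when $2k\le d$, the first step is a symmetry reduction. Using $|\langle A,B\rangle|=|\langle A^\perp,B^\perp\rangle|$, the fact that $V\mapsto V^\perp$ carries $\nu^d_k$ to $\nu^d_{d-k}$, and the fact (recalled before Lemma \ref{L1c}) that $c^d_{k,i}$ depends only on $d$, $k$ and the relative position of the two subspaces, one obtains $c^d_{k,i}=c^d_{d-k,i}$ by passing to orthogonal complements. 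The same symmetry $\gamma^d_{k,i}=\gamma^d_{d-k,i}$ follows from $\binom{d+2}{d-k}=\binom{d+2}{k+2}$ together with the binomial identity $(k+1)(k+2)\binom{d+2}{k+2}=(d-k+1)(d-k+2)\binom{d+2}{k}$. Hence it suffices to treat the case $2k\le d$, in which $k\wedge(d-k)=k$.

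We then induct on $k$. For $k=0$ the only admissible index is $i=0$, the space $G(d,0)$ is a point, and $c^d_{0,0}=1=\gamma^d_{0,0}$. Let $k\ge1$ and $d\ge2k$, and assume \eqref{expl.expr2} holds with $k-1$ in place of $k$, for every $d'\ge2(k-1)$ and every admissible index. Fix $0\le i\le k$ and note $d-1\ge2(k-1)$. If $i<k$, then $i\le(k-1)\wedge(d-k)$, so Lemma \ref{L1c} and the induction hypothesis give $c^d_{k,i}=I^d_k\,c^{d-1}_{k-1,i}=I^d_k\,\gamma^{d-1}_{k-1,i}$; if $i=k$, then $k-1\le(k-1)\wedge(d-k)$, so Lemma \ref{L2c} and the induction hypothesis give $c^d_{k,k}=J^d_k\,c^{d-1}_{k-1,k-1}=J^d_k\,\gamma^{d-1}_{k-1,k-1}$. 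It then remains to confirm the two arithmetic identities $I^d_k\,\gamma^{d-1}_{k-1,i}=\gamma^d_{k,i}$ (for $i<k$) and $J^d_k\,\gamma^{d-1}_{k-1,k-1}=\gamma^d_{k,k}$; after cancelling the explicit factors in $i$ and $k$, both collapse to the single identity
$$\binom{d}{k}\binom{d+2}{k}=\frac{d(d+2)}{k^{2}}\binom{d-1}{k-1}\binom{d+1}{k-1},$$
which is immediate from $\binom{d}{k}=\frac{d}{k}\binom{d-1}{k-1}$ and $\binom{d+2}{k}=\frac{d+2}{k}\binom{d+1}{k-1}$. This closes the induction, and the symmetry reduction then yields the claim for all admissible $d,k,i$.

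I do not expect a serious obstacle here. The argument is a short induction, and the only points requiring care are the initial symmetry reduction -- needed precisely because Lemmas \ref{L1c} and \ref{L2c} require $2k\le d$ -- and the bookkeeping that the constraints $2k'\le d'$ and $0\le i'\le k'\wedge(d'-k')$ persist when passing from $(d,k)$ to $(d-1,k-1)$. A single application of one recursion at each stage suffices, since for $k\ge1$ the case $i<k$ is handled by Lemma \ref{L1c} and the case $i=k$ by Lemma \ref{L2c}, and each recursion lowers $k$, so the process terminates at $k=0$.
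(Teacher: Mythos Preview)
Your proof is correct and follows essentially the same approach as the paper: reduce to the case $2k\le d$ via the symmetry $c^d_{k,i}=c^d_{d-k,i}$ (which the paper verifies in the same way, only after rather than before the main computation), and then use Lemmas~\ref{L1c} and~\ref{L2c} to descend in $k$. The only cosmetic difference is that the paper writes the descent as the explicit product $c^d_{k,i}=I^d_k I^{d-1}_{k-1}\cdots I^{d-k+i+1}_{i+1}\,J^{d-k+i}_{i}\cdots J^{d-k+1}_{1}$ rather than formalizing it as an induction on $k$.
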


\begin{proof}
For $2k\le d$, we can iterate the recursive formulas from Lemma~\ref{L1c} and Lemma~\ref{L2c} and get
\begin{equation*}
c^d_{k,i}=I^d_kI^{d-1}_{k-1}\cdots I^{d-k+i+1}_{i+1}J^{d-k+i}_{i}J^{d-k+i-1}_{i-1}\cdots J^{d-k+1}_1 ,
\end{equation*} which yields \eqref{expl.expr2}. 

For $2k>d$, we observe that the right-hand side of \eqref{expl.expr2} does not change if we replace $k$ by $d-k$. Hence, we only need to show that $c^d_{k,i}= c^d_{d-k,i}$ holds (for $2k>d$ and $0\le i\le (d-k)$). Let $E,F\in G(d,k)$ have a $(k-i)$-dimensional intersection. Then, 
\begin{align*}
c^d_{k,i} &=  \int_{G(d,k)} \langle E,V\rangle^2\langle F,V\rangle ^2 \nu^d_k(dV)\cr
&= \int_{G(d,d-k)} \langle E,V^\perp\rangle^2\langle F,V^\perp\rangle ^2 \nu^d_{d-k}(dV)\cr
&=\int_{G(d,d-k)} \langle E^\perp,V\rangle^2\langle F^\perp,V\rangle ^2 \nu^d_{d-k}(dV)\cr
&= c^d_{d-k,i} ,
\end{align*}
since $E^\perp \cap F^\perp$ has dimension $d-k-i$.
\end{proof}

Using \eqref{abbr}, \eqref{expl.expr2} and the formula for $\alpha_m$, we obtain
\begin{align*}
\sum_m&\alpha_md^{d,k}_{m,i}\\
&=\sum_m\sum_j (-1)^m \binom{d+1}{m}^{-1}\frac{(d+2-k)(k+2)}{(d+2)(j+1)(j+2)} F(d,k,m,j,i)
\\
&=\frac{(d+2-k)(k+2)}{2(d+2)}\sum_m(-1)^m \binom{d+1}{m}^{-1}\sum_j\binom{j+2}{2}^{-1}F(d,k,m,j,i).
\end{align*}
We replace $m$ by $k-m$ and $j$ by $k-j$. Then, we get from Theorem \ref{identity}
\begin{align*}
\sum_m\alpha_md^{d,k}_{m,i}&=\frac{(d+2-k)(k+2)}{2(d+2)}\sum_m (-1)^{k-m} \binom{d+1}{k-m}^{-1}\\
&\qquad\times\sum_j\binom{k-j+2}{2}^{-1}F(d,k,k-m,k-j,i)\\
&=\binom{k-i}{k}
\end{align*}
which is $1$ for $i=0$ and $0$ otherwise. This completes the proof of Theorem \ref{explsol}.

\section{Final discussion}

We add a few remarks concerning the functional analytic aspects of the two integral representations which we obtained for the projection functions. 
Let $C(F(d,d-k))$ denote the Banach space of continuous functions on $F(d,d-k)$. We consider the integral transform
$T_{k} : C(F(d,d-k))\to C(F(d,d-k))$, given by
\begin{equation*}
(T_{k}h)(u,L) 
:= \int_{G(\langle u\rangle ,d-k)} \langle L,M\rangle ^2 h(u,M) \nu^{\langle u\rangle}_{d-k}(dM) ,
\end{equation*}
for $(u,L)\in F(d,d-k)$. The transform $T_k$ is obviously continuous and self-adjoint, therefore it can be extended to a weakly continuous linear map on the dual space of finite signed measures on $F(d,d-k)$. 
It follows from the definitions \eqref{firstflagmeasure} and \eqref{secondflagmeasure} that, for a polytope $P$, the measure $\psi_k(P,\cdot )$ is the image of $\tau_k(P,\cdot )$ under $T_{k}$. This also holds for convex bodies $K\in{\mathcal K}$, if we choose $\tau_k(K,\cdot )$ as the limit of a converging sequence  $\tau_k(P_i,\cdot )$, as we did in Section 3. The non-uniqueness of this limit implies that the linear map $T_k$ is not injective, as was already remarked in \cite{HHW}. 

Using this mapping, we could deduce Theorem \ref{TH2}  for polytopes $P$ from 
Theorem \ref{TH1} (up to a constant), if we find a function $g$ satisfying the integral equation
\begin{equation*}
\int_{G(\langle u\rangle ,d-k)} \langle U,F^\bot\rangle ^2 g(E,u,U) \nu^{\langle u\rangle}_{d-k}(dU) = \frac{\langle u^\bot\cap F^\bot, u^\bot\cap E^\bot\rangle^2}{\| u | E^\bot \|^{d-k-2}} 
\end{equation*}
for $E,F\in G(d,k)$ and $u\bot F$. 
For a discussion of the latter, we may fix $u\in S^{d-1}$ and  replace $u^\perp$ by $\R^n$ ($n=d-1$). Instead of integrating over all $U\in G(\langle u\rangle,d-k)$, we may then integrate over all $L\in G(n,n-k)$ and seek a function $f_E$ on $G(n,n-k)$ such that 
\begin{equation}\label{inteq}
\int_{G(n,n-k)} \langle L,F\rangle^2 f_E(L) \nu_{n-k}(dL) = \langle E,F\rangle^2 
\end{equation}
holds for all $E,F\in G(n,n-k)$. As we have seen in Lemma \ref{L_Grassm},  \eqref{inteq} has a solution, for each $E\in G(n,n-k)$, given by a continuous function  $f_E$ on $G(n,n-k)$. Since $E$ here is an arbitrary subspace, we may choose $E$ to be the linear hull of the vectors $e_1,\dots ,e_{n-k}$ in the standard basis $e_1,\dots ,e_n$ of $\R^n$ and consider the integral operator $S_{k} : C(G(n,n-k))\to C(G(n,n-k))$, defined by
$$
S_{k}(h)(F) := \int_{G(n,n-k)} \langle L,F\rangle^2 h(L) \nu_{n-k}(dL), $$
for $F\in G(n,n-k), h\in C(G(n,n-k))$. The mapping 
$S_{k}$ is a slight variant of the mapping $T_k$ mentioned above. Again, $S_{k}$ is self-adjoint, but not injective. However, if we consider the subspace ${\mathcal L}\subset C(G(n,n-k))$ considered in \cite{HRW}, then $S_{k}$ maps ${\mathcal L}$ to ${\mathcal L}$ and, when restricted to ${\mathcal L}$, it is a bijection by \cite[Corollary 1]{HRW}. To recall the definition of ${\mathcal L}$, we define $E_I :=\lin\{ e_i : i\in I\}$, where $I\subset \{1,\dots ,n\}$. Then, ${\mathcal L}$ is the $n\choose {n-k}$-dimensional linear space of continuous functions on $G(n,n-k)$ spanned by the functions $\langle E_I,\cdot\rangle^2$ with $|I|=n-k$. Since for the above choice of $E$, we trivially have $\langle E,\cdot\rangle^2 \in {\mathcal L}$, the function $f_E$ can be defined as the inverse image of $\langle E,\cdot\rangle^2$ in ${\mathcal L}$. Actually, this is the approach we used in the proof of Theorem \ref{TH2}. The operator $S_k$ is the special case $\alpha =2$ of the $\alpha$-cosine transform, which has gained recent attention (see, for example, \cite{AGS} and the literature cited there).

We also add some comments on the flag formula \eqref{th2} for the projection function $v_k(K,\cdot)$. First, the special case $k=d-1$ yields \eqref{projfunc} again. Namely, \eqref{conn} shows that $\psi_{d-1}(K,\cdot) = S_{d-1}^{(0)}(K,\cdot)$ and the latter equals $S_{d-1}(K,\cdot)$ under the natural identification $u\leftrightarrow (u,\{ 0\})$. Then, the function $g(E,u,U)$, with the projection hyperplane $E$ and $U=\langle u\rangle$, must equal $\frac{1}{2} |\langle u, v\rangle|$ (up to a linear function), where $v$ is the normal of $E$.  The corresponding question as to whether \eqref{th2} implies \eqref{projfunc2} if $K$ is a generalized zonoid seems to be more complicated. Although there are formulas expressing the flag measure $\psi_k(K,\cdot)$ of a generalized zonoid $K$ in terms of the generating measure $\rho (K,\cdot)$ of $K$ (see \cite[Corollary 3]{HTW}), a possible  connection between  \eqref{th2} and \eqref{projfunc2} is complicated by the fact that a signed measure $\phi$ on $G(d,k)$ satisfying
$$
v_{k}(K,E) =  \frac{2^k}{k!} \int_{G(d,k)} |\langle E,F\rangle | \phi(dF) , \quad E\in G(d,k) ,
$$
is not uniquely determined, if $k\in\{ 2,\dots ,d-2\}$ (see \cite{GH}).

Using \eqref{conn}, we can formulate the projection formula \eqref{th2} also with $\psi_k(K,\cdot)$ replaced by $S_k^{(d-k-1)}(K,\cdot)$. Namely, let $\tilde g$ be defined by
$$
\tilde g(E,u,V) := \frac{\omega_d}{\omega_{k+1}}g(E,u,V+\langle u\rangle),\quad E\in G(d,k), (u,V)\in F^\perp(d,d-1-k).
$$
Then we have
\begin{equation}\label{projvariant}
v_k(K,E) = \int_{F^\perp(d,d-1-k)} \tilde g(E,u,V) S_k^{(d-k-1)}(K,d(u,V))
\end{equation}
for $K\in{\mathcal K}$ and almost all $E\in G(d,k)$. Formula \eqref{projvariant} can easily be generalized to lower order projection functions
$$
v_{ik}(K,E) := V_i(K|E),\quad E\in G(d,k),
$$
$i=0,\dots, k$, and to mixed projection functions
$$
v(K_1,\dots ,K_k,E) := V^{(E)}(K_1|E,\dots ,K_k|E), \quad E\in G(d,k),
$$
(where $V^{(E)}(M_1,\dots M_k)$ denotes the mixed volume (in $E$) of convex bodies $M_1,\dots ,M_k\subset E$), since both sides of \eqref{projvariant} allow a polynomial expansion for $K+rB^d, r\ge 0$, respectively a multinomial expansion for $r_1K_1+\dots +r_m K_m, K_i\in{\mathcal K}, r_i\ge 0$. Here, for the right-hand side, Proposition 2 and Theorem 6 in \cite{HTW} can be used, the latter introducing mixed flag measures. Explicit formulas are given in \cite{Hind} and \cite{Zieb}.

Since projection functions in general do not determine the shape of a non-symmetric convex body, one may also ask for flag representations of directed projection functions, as they were introduced and studied in \cite{GW06a,GW06b}. We leave this as a question for further investigations.

\section{Appendix}

The following lemma was used in Section 5.

\begin{lemma} \label{L_int_sphere}
If $d\geq 2$, $u$ is a fixed unit vector in $\R^d$ and $p>-1$ then
$$\int_{S^{d-1}}|\langle u,v\rangle|^p\, \Ha^{d-1}(dv)=2\pi^{\frac{d-1}2}\frac{\Gamma\left(\frac{p+1}2\right)}{\Gamma\left(\frac{d+p}2\right)}.$$
If further $q>-1$ and $w$ is another unit vector perpendicular to $u$, we have
$$\int_{S^{d-1}}|\langle u,v\rangle|^p |\langle w,v\rangle|^q\, \Ha^{d-1}(dv)=2\pi^{\frac{d-2}2}\frac{\Gamma\left(\frac{p+1}2\right)\Gamma\left(\frac{q+1}2\right)}
{\Gamma\left(\frac{d+p+q}2\right)}.$$
\end{lemma}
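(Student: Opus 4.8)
The plan is to evaluate, for a single fixed unit vector $u$, the Gaussian integral
\[
I(p):=\int_{\R^d}|\langle u,x\rangle|^p\,e^{-\|x\|^2}\,\Ha^d(dx)
\]
in two ways and to compare the results. Choosing coordinates so that $u=e_1$, Tonelli's theorem (all integrands are nonnegative and measurable) lets me factor $I(p)$ as $\bigl(\int_{\R}|t|^p e^{-t^2}\,dt\bigr)\bigl(\int_{\R}e^{-s^2}\,ds\bigr)^{d-1}$. The substitution $s=t^2$ turns the first factor into $\Gamma\bigl(\tfrac{p+1}2\bigr)$ — this is exactly the point where $p>-1$ is needed for convergence — and the remaining $d-1$ factors contribute $\pi^{(d-1)/2}$, so $I(p)=\pi^{(d-1)/2}\,\Gamma\bigl(\tfrac{p+1}2\bigr)$.

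On the other hand, passing to polar coordinates $x=rv$ with $r>0$ and $v\in S^{d-1}$ gives
\[
I(p)=\Bigl(\int_{S^{d-1}}|\langle u,v\rangle|^p\,\Ha^{d-1}(dv)\Bigr)\int_0^\infty r^{p+d-1}e^{-r^2}\,dr,
\]
and the substitution $s=r^2$ evaluates the radial integral as $\tfrac12\Gamma\bigl(\tfrac{d+p}2\bigr)$. Equating the two expressions for $I(p)$ and solving for the spherical integral yields the first formula.

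For the second formula I would run the same argument with two distinguished coordinates: choose $u=e_1$ and $w=e_2$ (possible since $d\ge2$ and $u\perp w$) and consider $\int_{\R^d}|x_1|^p|x_2|^q e^{-\|x\|^2}\,\Ha^d(dx)$. The Cartesian evaluation now produces $\pi^{(d-2)/2}\,\Gamma\bigl(\tfrac{p+1}2\bigr)\Gamma\bigl(\tfrac{q+1}2\bigr)$, with $p,q>-1$ ensuring convergence of the two one-dimensional factors, while the polar evaluation produces $\bigl(\int_{S^{d-1}}|\langle u,v\rangle|^p|\langle w,v\rangle|^q\,\Ha^{d-1}(dv)\bigr)\cdot\tfrac12\Gamma\bigl(\tfrac{d+p+q}2\bigr)$; comparison gives the claim. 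For $d=2$ there is simply no residual Gaussian factor and $\pi^{(d-2)/2}=1$, so the identity still holds, and $d\ge 2$ is precisely what is needed to have two orthonormal vectors available.

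The computation is entirely routine, so there is no real obstacle; the only items deserving a word of care are the appeals to Tonelli and to polar coordinates (justified by nonnegativity and measurability) and the bookkeeping that checks the hypotheses $p>-1$ and $q>-1$ are exactly what make the one-dimensional integrals $\int_{\R}|t|^p e^{-t^2}\,dt$ finite (the radial integrals are then automatically finite since $d\ge2$). An alternative, somewhat longer route would be an induction on $d$ using the slicing identity $\int_{S^{d-1}}f\,\Ha^{d-1}(dv)=\int_{-1}^1(1-t^2)^{(d-3)/2}\int_{S^{d-2}}f\bigl(t,\sqrt{1-t^2}\,w'\bigr)\,\Ha^{d-2}(dw')\,dt$ together with the Beta integral, but the Gaussian argument above is shorter and handles both formulas uniformly.
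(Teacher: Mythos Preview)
Your argument is correct and complete; the Gaussian trick evaluates both integrals cleanly, and your remarks about where $p>-1$, $q>-1$ and $d\ge 2$ enter are accurate.

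The paper takes a different route. For the first identity it applies the coarea formula on $S^{d-1}$ to the map $g(v)=\langle u,v\rangle^2$, computes the Jacobian $J_1g(v)=2|\langle u,v\rangle|\sqrt{1-\langle u,v\rangle^2}$, observes that each level set $g^{-1}\{r\}$ consists of two $(d-2)$-spheres of radius $\sqrt{1-r}$, and then evaluates the resulting one-dimensional integral via the Beta function. For the second identity it repeats the coarea step and feeds the already-proved first identity into the integral over the level sphere. Compared with this, your approach is shorter, treats the two formulas uniformly, and avoids any Jacobian computation; it also does not need the first formula as input to the second. The paper's method, on the other hand, stays entirely on the sphere and is closer in spirit to the slicing alternative you mention at the end; it is essentially that induction step carried out once, with the Beta integral appearing directly rather than via a Gaussian detour.
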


\begin{proof}
We apply the coarea formula for the mapping $g:v\mapsto \langle u,v\rangle^2$ defined on the unit sphere. Since
$$J_1g(v)=2\langle u,v\rangle\sqrt{1-\langle u,v\rangle^2},\quad v\in S^{d-1},$$
and since for $0<r<1$, $g^{-1}\{r\}$ is consists of two spheres of dimension $d-2$ and radius $\sqrt{1-r}$, we get
\begin{align*}
\int_{S^{d-1}}|\langle u,v\rangle|^p\, \Ha^{d-1}(dv)&= \int_0^1\frac 12 r^{\frac{p-1}2}(1-r)^{-\frac 12}2(1-r)^{\frac{d-2}2}\omega_{d-2}\, dr\\
&=\int_0^1 r^{\frac{p-1}2}(1-r)^{\frac{d-3}2}\, dr \,\omega_{d-2}\\
&=\frac{\Gamma\left(\frac{p+1}2\right)\Gamma\left(\frac{d-1}2\right)}
{\Gamma\left(\frac{d+p}2\right)}  \frac{2\pi^{d/2}}{\Gamma\left(\frac{d-1}2\right)}
\end{align*}
and the first assertion follows. 

For the second formula we again use the coarea formula with the mapping $g$ and get
\begin{align*}
\int_{S^{d-1}}&|\langle u,v\rangle|^p |\langle w,v\rangle|^q\, \Ha^{d-1}(dv)\cr
&= \int_0^1\frac 12 r^{\frac{p-1}2}(1-r)^{-\frac 12}
\int_{g^{-1}\{ r\}}|\langle w,v\rangle|^q\,\cH^{d-2}(dv)\, dr.
\end{align*}
The two mappings
$$z\mapsto \pm\sqrt{r}u+\sqrt{1-r}z,\quad z\in S^{d-2},$$
map $S^{d-2}$ injectively onto $g^{-1}\{r\}$, and the area formula yields
\begin{align*}
\int_{g^{-1}\{ r\}}|\langle w,v\rangle|^q\,\cH(dv)&=(1-r)^{\frac{d-2+q}2}\int_{S^{d-2}} |\langle w,z\rangle|^q\, \cH^{d-2}(dz)\\
&=(1-r)^{\frac{d-2+q}2} 2\pi^{\frac{d-2}2}\frac{\Gamma\left(\frac{q+1}2\right)}{\Gamma\left(\frac{d-1+q}2\right)}
\end{align*}
by the first (already proved) equality. The final result follows now from the standard integral
$$\int_0^1 r^{\frac{p-1}2}(1-r)^{-\frac{d-3+q}2}\, dr = \frac{\Gamma\left(\frac{p+1}2\right)
\Gamma\left(\frac{d-1+q}2\right)}{\Gamma\left(\frac{d+p+q}2\right)}.$$
\end{proof}

Next, we give a proof of the combinatorial identity which was used in Section 5.
As usual we define, for $a \in {\mathbb Z}$ and $z \in {\mathbb C}$,
\begin{align*}
{z \choose a} &:=  \frac{z(z-1) \cdots (z-a+1)}{a!} \mbox{ if } a 
\ge 1, \\
{z \choose 0} &:=  1 \mbox{ and } {z \choose a} := 0 \mbox{ if } a 
< 0. 
\end{align*}

\begin{theorem}\label{identity}  Let $d, k, i \in {\mathbb N}_{0}$ and 
$d \ge k-1$. Then
\begin{align} \label{1}
\frac{d+2-k}{d+2} \frac{k+2}{2}& \sum_{m=0}^{k} (-1)^{k-m} {d+1 
\choose k-m}^{-1} \sum_{j=0}^{k} {k+2-j \choose 2}^{-1} \nonumber \\
& \times \sum_{\ell = 0}^{k} {i \choose \ell} {k-i \choose m-\ell} 
{k-i \choose j-\ell} {d-2k + i \choose k + \ell - m - j} = {i \choose 
k}.
\end{align}
\end{theorem}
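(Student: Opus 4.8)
The plan is to prove the identity by reducing the triple sum to a manageable form, handling the innermost $\ell$-summation first via a Vandermonde-type convolution, then the $j$-summation, and finally the alternating $m$-summation. First I would look at the inner sum over $\ell$,
\[
\sum_{\ell=0}^{k}\binom{i}{\ell}\binom{k-i}{m-\ell}\binom{k-i}{j-\ell}\binom{d-2k+i}{k+\ell-m-j},
\]
and try to recognize it as a coefficient extraction in a product of binomial generating functions. Writing each binomial as a coefficient of an appropriate power of $(1+x)$, the four factors become a single coefficient $[x^{?}]\,(1+x)^{i}(1+x)^{k-i}(1+x)^{k-i}(1+x)^{d-2k+i}$ after matching up the ``moving'' index $\ell$ in pairs; the key is that the exponents sum to $d+i-k$ in a way independent of the book-keeping. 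By the Chu--Vandermonde identity this collapses the $\ell$-sum to a single binomial coefficient depending on $m,j,i,k,d$. This is the step I expect to require the most care, because the four binomials share the index $\ell$ in an asymmetric way (two as $\binom{\cdot}{\cdot-\ell}$, one as $\binom{i}{\ell}$, one with $\ell$ in the top slot-contribution through $k+\ell-m-j$), so one must introduce two auxiliary summation variables or use an integral (contour) representation of one binomial to decouple them cleanly.

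Next I would carry out the $j$-summation. After the $\ell$-sum collapses, the dependence on $j$ sits in a factor $\binom{k+2-j}{2}^{-1}$ together with the binomial(s) left over from the Vandermonde step; I would use the partial-fraction identity $\binom{k+2-j}{2}^{-1}=\frac{2}{(k+1-j)(k+2-j)}=2\bigl(\frac{1}{k+1-j}-\frac{1}{k+2-j}\bigr)$ to turn the weighted sum into a telescoping/Beta-integral form, or alternatively recognize $\binom{k+2-j}{2}^{-1}$ as proportional to $\int_0^1 t^{\,k-j}(1-t)\,dt$ up to normalization and exchange sum and integral. Either way the $j$-sum should again reduce, by a second Vandermonde convolution, to an explicit rational expression in $m,k,i,d$.

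Finally I would treat the alternating $m$-sum with the weight $\binom{d+1}{k-m}^{-1}$. Here I would again use the Beta-integral representation $\binom{d+1}{k-m}^{-1}=(d+2)\int_0^1 s^{\,k-m}(1-s)^{\,d+1-k+m}\,ds$ (valid since $d\ge k-1$ guarantees nonnegative exponents, which is exactly where the hypothesis is used), interchange sum and integral, evaluate the resulting finite alternating sum over $m$ in closed form (it should be a simple power of $(s-1)$ or similar, by the binomial theorem applied to the terms left after the first two collapses), and then compute the remaining elementary Beta integral. Collecting the prefactor $\frac{d+2-k}{d+2}\cdot\frac{k+2}{2}$ with the constants produced by the three integrations, the whole expression should simplify to $\binom{i}{k}$ — which is $1$ when $i=k$ and $0$ when $0\le i<k$, matching the needed normalization $(1,0,\dots,0)$ in \eqref{LSE}. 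The main obstacle is the first step: disentangling the four-fold binomial product sharing the index $\ell$; once that is done, the remaining two summations are routine Vandermonde/Beta-integral manipulations. As a sanity check I would verify the identity directly for small cases ($k=0$; $k=1$ with $i=0,1$; $k=2$), both to catch sign or index errors and to confirm the placement of the hypothesis $d\ge k-1$.
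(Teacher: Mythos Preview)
Your approach is genuinely different from the paper's, but the crucial first step does not work as stated. You claim that the $\ell$-sum
\[
S(m,j):=\sum_{\ell}\binom{i}{\ell}\binom{k-i}{m-\ell}\binom{k-i}{j-\ell}\binom{d-2k+i}{k+\ell-m-j}
\]
collapses via Chu--Vandermonde to \emph{a single binomial coefficient} in $m,j,i,k,d$. This is false. Take $i=0$: then only $\ell=0$ contributes and $S(m,j)=\binom{k}{m}\binom{k}{j}\binom{d-2k}{k-m-j}$, a genuine product of three binomials that is not a single binomial in any of the variables. The point is that with four binomials sharing one summation index you cannot expect a single Vandermonde collapse; at best you can peel off one pair, leaving a convolution of the remaining two still to be summed over $j$ (or $m$). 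Your later steps are predicated on having a single clean binomial after the $\ell$-sum, so the plan as written breaks here. It may be repairable --- for instance by pairing $\binom{k-i}{m-\ell}$ with $\binom{d-2k+i}{k+\ell-m-j}$ (lower indices summing to $k-j$) and carrying the remaining two factors through the $j$-step --- but that is substantially more work than you indicate, and the Beta-integral steps you sketch for $j$ and $m$ would have to handle products of binomials rather than single ones.

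The paper avoids direct evaluation entirely. It first observes that both sides are polynomials in $i$ of degree $\le k$, so one may take $i\in\{0,\dots,k\}$; then, for fixed $i,k$, the left side is a rational function of $d$ of degree $\le 0$ with at most simple poles at $d=-2,\dots,k-3$. The identity therefore reduces to (i) computing $\lim_{|d|\to\infty}$, which kills all terms except those with $k+\ell-m-j=0$ and yields an elementary alternating sum, and (ii) checking that each residue vanishes, which after a Vandermonde step comes down to the standard fact $\sum_{a=0}^{A}(-1)^a\binom{A}{a}p(a)=0$ for $\deg p<A$. This ``residues and asymptotics'' strategy sidesteps the four-factor $\ell$-sum that blocks your direct computation.
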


\begin{proof} The proof is divided into several steps.

(1) For fixed $d, k \in {\mathbb N}_{0}$ and $d \ge k-1$, both sides 
of (\ref{1}) are polynomials in $i$ of degree $k$ at the most. 
Therefore it is sufficient to verify (\ref{1}) for $i = 0,\dots,k$. 
Thus the following has to be shown:

For $d, k \in {\mathbb N}_{0}$ with $d \ge k-1$ and $i \in 
\{0,\dots,k\}$,
\begin{align} \label{2}
\frac{d+2-k}{d+2} \frac{k+2}{2}& \sum_{m=0}^{k} (-1)^{k-m} {d+1 
\choose k-m}^{-1} \sum_{j=0}^{k} {k+2-j \choose 2}^{-1} \nonumber \\
&\times \sum_{\ell = 0}^{k} {i \choose \ell} {k-i \choose m-\ell} 
{k-i \choose j - \ell} {d-2k+i \choose k+\ell - m - j} = \delta_{ik}.
\end{align}

(2) Let $k \in {\mathbb N}_{0}$ and $i \in \{0,\dots,k\}$ be fixed. 
Then the left-hand side of (\ref{2}) is a rational function $q$ in $d$ of 
degree less or equal 0 with possible poles at $-2,\dots,k-3$ (the 
singularity at $d = k-2$ is removable). All poles are simple. Hence 
it is sufficient to check that
\begin{equation}\label{3}
{\rm res}_{d=u} (q) = 0 \mbox{ for } u \in \{-2,\dots,k-3\},
\end{equation}
\begin{equation}\label{4}
\lim_{|d| \to \infty} q(d) = \delta_{ik}.
\end{equation}
Since
\[ {\rm res}_{d=u} \frac{{d - 2k + i \choose k + \ell - m - 
j}}{(d+2){d+1 \choose k-m}} = {u-2k+i \choose k + \ell - m - j} {k - 
m \choose u + 2} (-1)^{k-m-2-u} \]
whenever $u \in {\mathbb Z}$ and $m \in \{0,\dots,k\}$, condition 
(\ref{3}) is equivalent to
\begin{align} \label{5}
\sum_{m=0}^{k}(-1)^{k-m} \sum_{j=0}^{k}{k+2-j \choose 2}^{-1} &
\sum_{\ell=0}^{k}{i \choose \ell} {k-i \choose m-\ell} {k-i \choose 
j-\ell} {u-2k+i \choose k+\ell-m-j} \nonumber \\
&\quad\times {k-m \choose u+2} (-1)^{k-m-2-u} = 0.
\end{align}
Moreover, (\ref{4}) follows as soon as
\begin{equation}\label{6}
\frac{k+2}{2} \sum_{m=0}^{k}(-1)^{k-m} \sum_{
j,\ell=0\atop j = \ell }^{k} {k+2-j \choose 2}^{-1}{i \choose 
\ell} {k-i \choose m-\ell} {k-i \choose j-\ell} = \delta_{ik} 
\end{equation}
has been verified.

(3) First, we establish (\ref{6}). For the left-hand side of 
(\ref{6}), we obtain
\begin{align}\label{7}
\frac{k+2}{2} &\sum_{m=0}^{k}(-1)^{k-m} \sum_{j=0}^{k} {k+2-j 
\choose 2}^{-1} {i \choose j} {k-i \choose m-j} \nonumber \\
&=  \frac{k+2}{2} \sum_{j=0}^{k}{k+2-j \choose 2}^{-1} {i \choose 
j} \sum_{m=0}^{k} (-1)^{k-m} {k-i \choose m-j} \nonumber \\
&= \frac{k+2}{2} \sum_{j=0}^{k}{k+2-j \choose 2}^{-1} {i \choose 
j} \sum_{\overline{m}=0}^{k-j} (-1)^{k-j-\overline{m}} {k-i \choose 
\overline{m}} \nonumber \\
&= \frac{k+2}{2} \sum_{j=0}^{k}{k+2-j \choose 2}^{-1} {i \choose 
j} (-1)^{k-j} (1-1)^{k-i},
\end{align}
where we used that, in case $i \ge j$, the second sum is extended from 
0 to $k-i$ effectively. Now (\ref{6}) clearly follows if $i < k$. If 
$i = k$, then (\ref{7}) equals
\begin{align*}
\frac{k+2}{2} &\sum_{j=0}^{k}{k+2-j \choose 2}^{-1}{k \choose 
j} (-1)^{k-j} \\
&= \frac{k+2}{2} \sum_{j=0}^{k}{k+2 \choose j} {k+2 \choose 
k}^{-1} (-1)^{k-j} \\
&= \frac{k+2}{2}{k+2 \choose k}^{-1} \sum_{j=0}^{k}{k+2 \choose 
j} (-1)^{k-j} \\
&= \frac{k+2}{2} {k+2 \choose k}^{-1} (-1)^{k} {k+2-1 \choose k} 
(-1)^{k} \\
&= 1,
\end{align*}
where
\[ \sum_{j=0}^{k}(-1)^{j} {z \choose j} = (-1)^{k}{z-1 \choose k}, 
\quad z \in {\mathbb C}, k \in {\mathbb N}_{0}, \]
was used (this can be proved by induction with respect to $k$).

(4) We turn to the proof of (\ref{5}). Let $k \in {\mathbb N}_{0}$, 
$i \in \{0,\dots,k\}$ and $u \in \{-2,\dots,k-3\}$. Replacing $m - 
\ell$ by $m$, we see that (\ref{5}) is equivalent to
\begin{equation}\label{8}
\sum_{j=0}^{k}{k + 2 - j \choose 2}^{-1} \sum_{\ell=0}^{k} 
\sum_{m=0}^{k} {i \choose \ell} {k-i \choose m} {k-i \choose j-\ell} {u-2k+i 
\choose k-m-j} {k-m-\ell \choose u+2} = 0.
\end{equation}
Note that ${k-z \choose u+2}$ is a polynomial in $z$ of degree $u + 
2$ and ${z \choose w}$ is a polynomial in $z$ of degree $w$, where $0 
\le w \le u+2$. Hence ${k-z \choose u+2}$ is a linear combination of 
the polynomials ${z \choose w}$, $w = 0,\dots,u+2$.

(5) The preceding discussion demonstrates that (\ref{8}) follows from 
the stronger assertion: For $k \in {\mathbb N}_{0}$, $i \in 
\{0,\dots,k\}$, $u \in \{-2,\dots,k-3\}$ and $w \in \{0,\dots,u+2\}$,
\begin{equation}\label{9}
\sum_{j=0}^{k}{k+2-j \choose 2}^{-1} \underbrace{\sum_{\ell=0}^{k} 
\sum_{m=0}^{k}{i \choose \ell} {k-i \choose m} {k-i \choose j-\ell} 
{u-2k+i \choose k-m-j}{m + \ell \choose w}}_{=: \alpha} = 0.
\end{equation}
Using the relation (Vandermonde convolution)
\[ {m+\ell \choose w} = \sum_{w_{1} + w_{2} = w} {m \choose w_{1}} 
{\ell \choose w_{2}}, \]
we conclude that
\[ \alpha = \sum_{w_{1}+w_{2}=w} \left(\sum_{\ell=0}^{k}{i \choose 
\ell} {k-i \choose j-\ell} {\ell \choose w_{2}}\right) \left( 
\sum_{m=0}^{k}{k-i \choose m}{u-2k+i \choose k-m-j}{m \choose 
w_{1}}\right). \]
The expressions in brackets can be simplified, since we have, for $A \ge B \ge 
0$ and $C, D \in {\mathbb C}$,
\begin{align*}
\sum_{a=0}^{A}{C \choose a}{D \choose A - a}{a \choose B} &= 
\sum_{a=B}^{A}\frac{C \cdots (C - a+1)D \cdots 
(D-A+a+1)a!}{a!(A-a)!B!(a-B)!} \\
&= \frac{C \cdots (C-B+1)}{B!} \sum_{a=B}^{A}{C-B \choose a-B}{D 
\choose A - a} \\
&= {C \choose B} \sum_{\overline{a}=0}^{A-B} {C - B \choose 
\overline{a}} {D \choose A - B - \overline{a}} \\
&= {C \choose B}{C - B + D \choose A - B},
\end{align*}
which holds in fact for all $A, B \ge 0$ and $C, D \in {\mathbb C}$. 
Thus
\[ \alpha = \sum_{w_{1}+w_{2}=w} {i \choose w_{2}} {i - w_{2}+k-i 
\choose j - w_{2}} {k-i \choose w_{1}} {k-i-w_{1}+u - 2k+i \choose 
k-j-w_{1}}. \]
The left-hand side of (\ref{9}) now simplifies to
\begin{align*}
 \sum_{w_{1}+w_{2}=w} &{i \choose w_{2}}{k-i \choose w_{1}} 
\sum_{j=0}^{k-w_{1}}{k+2-j \choose 2}^{-1}{k-w_{2} \choose 
j-w_{2}}{u-k-w_{1} \choose k-j-w_{1}} \\
&= \sum_{w_{1}+w_{2}=w} {i \choose w_{2}}{k-i \choose w_{1}} \\
&\quad\times
\underbrace{\sum_{\overline{\j}=0}^{k-w_{1}-w_{2}}{k-w_{2} \choose 
\overline{\j}}{k + 2-w_{2}-\overline{\j} \choose 2}^{-1}{u-k-w_{1} 
\choose k-w_{1}-w_{2}-\overline{\j}}}_{=: \beta}.
\end{align*}
We show that $\beta = 0$. Using the relation
\begin{align*}
{u-k-w_{1} \choose k-w_{1}-w_{2}-\overline{\j}} & =  
(-1)^{k-w_{1}-w_{2}-\overline{\j}}{k-w_{1}-w_{2}-\overline{\j}-1 - 
(u-k-w_{1}) \choose k-w_{1}-w_{2}-\overline{\j}} \\
& =  (-1)^{k-w_{1}-w_{2}-\overline{\j}}{2k-u-w_{2}-\overline{\j}-1 
\choose k-w_{1}-w_{2}-\overline{\j}} \\
& =  (-1)^{k-w_{1}-w_{2}-\overline{\j}}{2k-u-w_{2}- \overline{\j}-1 
\choose k-u+w_{1}-1},
\end{align*}
we get
\begin{align*}
\beta & =  \sum_{\overline{\j}=0}^{k-w_{1}-w_{2}}{k - w_{2} \choose 
\overline{\j}}(-1)^{\overline{\j}}{k+2-w_{2}-\overline{\j} \choose 
2}^{-1}\\
&\hspace{6cm}\times{2k-u-w_{2}-\overline{\j}-1 \choose k-u+w_{1}-1}(-1)^{k-w} \\
& =  \sum_{\overline{\j}=0}^{k-w_{2}}{k-w_{2} \choose 
\overline{\j}}(-1)^{\overline{\j}}\underbrace{{k+2-w_{2}-\overline{\j} 
\choose 2}^{-1} {2k-u-w_{2}-\overline{\j}-1 \choose k-u+w_{1}-1}}_{=: 
\gamma(\overline{\j})} (-1)^{k-w}.
\end{align*}
The last equality follows from the fact that for $\overline{\j} > k - 
w_{1} - w_{2}$, we also have
\[ 2k-u-w_{2}-1 - \overline{\j} < k - u + w_{1} - 1 \]
so that the third binomial coefficient is zero.  The 
denominator of $\gamma$ has simple zeros at $k+2-w_{2}$, $k+1-w_{2}$. Moreover, the 
numerator of $\gamma$ has zeros at $2k-u-w_{2}-1,\dots,k-w_{2}-w_{1}+1$ and
\[ 2k-u-w_{2}-1 \ge k+2-w_{2} \ge k+1-w_{2} \ge k-w_{2}-w_{1}+1 .\]
Hence, $\gamma$ is 
a polynomial in $\overline{\j}$ of degree at most $k-u + w_{1}-3$. 
The degree of $\gamma$ can be estimated by
\[ k-u+w_{1}-3 \le k-u + w - w_{2} - 3 \le k-u + u+2-3 = k-1. \]
By \cite[(5.42)]{GKP},
\[ \sum_{a=0}^{A}{A \choose a}(-1)^{a}p(a) = 0 \]
for any $p \in {\mathbb C}[x]$ with ${\rm deg}(p) \le A-1$, hence 
$\beta = 0$ and therefore $\alpha = 0$. This establishes (\ref{9}).
\end{proof}

\section*{Acknowledgements}
The proof of Theorem \ref{identity} is based on a joint work of DH with Manfred Peter. 

The research of DH and WW has been supported by DFG projects HU 1874/4-2 and WE 1613/2-2 and JR has been partially supported by grant GA\v{C}R P201/10/0472.

\noindent
Author's addresses:

\bigskip

\noindent
Paul Goodey, University of Oklahoma, Department of Mathematics, Norman, OK 73019, U.S.A., email: pgoodey@ou.edu

\bigskip
\noindent
Wolfram Hinderer, 
Robert-Koch-Str. 196, 
D-73760 Ostfildern, Germany, email:
wolfram@hinderer-lang.de

\bigskip

\noindent
Daniel Hug, Karlsruhe Institute of Technology (KIT), 
Department of Mathematics, 
D-76128 Karlsruhe, Germany, email: daniel.hug@kit.edu

\bigskip

\noindent
Jan Rataj, Charles University, Faculty of Mathematics and Physics, 186 75 Praha 8, Czech Republic, email: rataj@karlin.mff.cuni.cz

\bigskip

\noindent
Wolfgang Weil, Karlsruhe Institute of Technology (KIT), 
Department of Mathe\-matics, 
D-76128 Karls\-ruhe, Germany, email: wolfgang.weil@kit.edu


\begin{thebibliography}{99}

\bibitem{AGS} S. Alesker, D. Gourevitch, S. Sahi, On an analytic description of the $\alpha$-cosine transform on real Grassmannians. {\em arXiv:1409.4882v3}, 53 pp. (2015).

\bibitem{Amb1} R.V. Ambartzumian, Combinatorial integral geometry, metrics and zo\-no\-ids. {\em Acta Appl. Math.} {\bf 9}, 3--28 (1987).

\bibitem{Amb2} R.V. Ambartzumian, {\em Factorization Calculus and Geometric Probability}. Cambridge University Press, Cambridge 1990.

\bibitem{GH} P. Goodey, R. Howard, Processes of flats induced by higher dimensional processes. {\em Adv. Math.} {\bf 80}, 92--109 (1990).

\bibitem{GW} P. Goodey, W. Weil, Centrally symmetric convex bodies and Radon transforms on higher order Grassmannians. {\em Mathematika} {\bf 38}, 117--133 (1991).

\bibitem{GW06a} P. Goodey, W. Weil, Directed projection functions of convex bodies. {\it Monatsh. Math.} {\bf 149}, 43--64, 65 (Erratum) (2006).

\bibitem{GW06b} P. Goodey, W. Weil, Determination of convex bodies by directed projection functions. {\it Mathematika} {\bf 53}, 49--69 (2006).

\bibitem{GKP}  R.L. Graham, D.E. Knuth, O. Patashnik, {\em Concrete Mathematics: A Foundation for Computer Science.} 2nd Ed., Addison-Wesley, Reading (Mass.) 1994.  

\bibitem{Hind} W. Hinderer, {\em Integral Representations of Projection Functions}. PhD Thesis, University of Karlsruhe, Karlsruhe 2002. 

\bibitem{HHW} W. Hinderer, D. Hug, W. Weil, Extensions of translation invariant valuations on polytopes. {\em Mathematika} {\bf 61}, 236--258 (2015).

\bibitem{Hug} D. Hug,  {\em Measures, Curvatures and Currents in Convex Geometry.} 
Habilitation Thesis, University of Freiburg, Freiburg 1999. 

\bibitem{HRW} D. Hug, J. Rataj, W. Weil, A product integral representation of mixed volumes of two convex bodies. {\em Adv. Geom.} {\bf 13}, 633--662 (2013).

\bibitem{HSS08} D. Hug, R. Schneider, R. Schuster, Integral geometry of tensor valuations. {\it Adv.\ Appl.\ Math.} {\bf 41}, 482--509 (2008).

\bibitem{HTW} D. Hug, I. T\"urk, W. Weil, Flag measures for convex bodies. In: {\it Asymptotic Geometric Analysis}, 
ed. by M. Ludwig et al., Fields Institute Communications, Vol. {\bf 68}, Springer, 2013, 145--187.



\bibitem{R99} J. Rataj, Translative and kinematic formulae for curvature measures of flat sections. {\em Math. Nachr.} {\bf 197}, 89--101 (1999).



\bibitem{Schn93} R. Schneider, Polytopes and Brunn-Minkowski theory. In:
{\it Polytopes: Abstract, Convex and
Computational (Scarborough 1993)}, ed. by T. Bisztriczky et al.,
NATO ASI Series C, Vol. {\bf 440}, Kluwer, 1994, 273--299.


\bibitem{S} R. Schneider, {\em Convex Bodies: The Brunn-Minkowski Theory}. 2nd Ed., Cambridge University Press, Cambridge 2014.

\bibitem{SW} R. Schneider, W. Weil, {\em Stochastic and Integral Geometry}. Springer, Hei\-del\-berg-New York 2008.

\bibitem{Str} D.W. Stroock, {\em Probability Theory, An Analytic View}. 2nd Ed., Cambridge University Press, Cambridge 2011.

\bibitem{Zieb} I. Ziebarth, {\em Lokales Verhalten konvexer K\"orper und Approximation}. PhD Thesis, Karlsruhe Institute of Technology, Karlsruhe 2014. 

\end{thebibliography}
\end{document}